\providecommand{\algorithmname}{Algorithm}
\theoremstyle{plain}
\newtheorem{lemma}{\textbf{Lemma}}
\newtheorem{theorem}{\textbf{Theorem}}\setcounter{theorem}{0}
\newtheorem{assumption}{\textbf{Assumption}}
\theoremstyle{definition}
\newtheorem{remark}{\textbf{Remark}}
\definecolor{cm}{RGB}{0,0,200}
\definecolor{yy}{RGB}{255,0,255}
\newcommand{\est}{\texttt{square-root MC}}
\begin{document}
\title{Optimal tuning-free convex relaxation for noisy matrix completion}
\author{{Yuepeng Yang\thanks{Department of Statistics, University of Chicago; Email: \texttt{\{yuepengyang,
congm\}@uchicago.edu}}} \and{Cong Ma\footnotemark[1]}}
\maketitle
\begin{abstract}
This paper is concerned with noisy matrix completion---the problem
of recovering a low-rank matrix from partial and noisy entries. Under uniform sampling and incoherence assumptions, we
prove that a tuning-free square-root matrix completion estimator (\est) 
achieves optimal statistical performance for solving the noisy matrix
completion problem. Similar to the square-root Lasso estimator in
high-dimensional linear regression, \est~does not rely on the knowledge
of the size of the noise. While solving \est~is a convex program, our statistical
analysis of \est~hinges on its intimate connections to a nonconvex
rank-constrained estimator. 
\end{abstract}

\section{Introduction}

Low-rank matrix completion~\cite{candes2009exact,keshavan2010matrix}
aims to reconstruct a low-rank data matrix from its partially observed
entries. This problem finds numerous applications in collaborative
filtering~\cite{rennie2005fast}, causal inference~\cite{athey2021matrix},
sensor network localization~\cite{biswas2006semidefinite}, etc. 

In this paper, we focus on the \emph{noisy} matrix completion problem, in
which the revealed entries are further corrupted by random noise.
Mathematically, let $\bm{L}^{\star}\in\mathbb{R}^{n\times n}$ be
a rank-$r$ matrix of interest, and $\bm{E}\in\mathbb{R}^{n\times n}$
denotes the noise matrix. We observe a subset of entries 
\begin{equation}
M_{ij}=L_{ij}^{\star}+E_{ij},\qquad\text{for }(i,j)\in\Omega,\label{eq:obs}
\end{equation}
where $\Omega\subseteq\{1,2,\ldots,n\}\times\{1,2,\ldots,n\}$ represents
the index set of the observations. The goal of noisy matrix completion
is to recover the underlying low-rank matrix $\bm{L}^{\star}$ given
the observation $\bm{M}=[M_{ij}]$. 

Arguably, one of the most natural approaches to solving noisy matrix
completion is the following nuclear norm regularized least-squares
estimator~\cite{candes2010matrix,chen2020noisy}: 
\begin{equation}
\min_{\bm{L}\in\mathbb{R}^{n\times n}}\quad\sum_{(i,j)\in\Omega}(L_{ij}-M_{ij})^{2}+\lambda\|\bm{L}\|_{*},\label{eq:mc}
\end{equation}
where $\|\bm{L}\|_{*}$ denotes the nuclear norm (i.e., sum of singular
values) of the matrix $\bm{L}$, and $\lambda>0$ is a tuning parameter.
Here, the least-squares loss $\sum_{(i,j)\in\Omega}(L_{ij}-M_{ij})^{2}$
measures the fidelity of the estimate $\bm{L}$ to the observation
$\bm{M}$, while the nuclear norm penalty $\lambda\|\bm{L}\|_{*}$
encounrages the low-rank property of the solution. In a recent work~\cite{chen2020noisy},
it has been shown that with properly chosen regularization parameter
$\lambda$, the nuclear norm regularized least-squares estimator~\eqref{eq:mc}
achieves optimal statistical performance in terms of estimating the
low-rank matrix $\bm{L}^{\star}$. However, this optimal choice depends on the noise size, which is often \emph{unknown
}in practice. This begs the question:

\medskip

\emph{Can we develop an estimator for noisy matrix completion that
does not rely on the unknown noise size (a.k.a., tuning-free), and
at the same time achieves optimal statistical performance?}

\medskip

Motivated by the success of the square-root Lasso estimator~\cite{belloni2011square}
for sparse recovery problems, we consider in this paper the following
square-root matrix completion estimator (dubbed \est):

\begin{equation}
\min_{\bm{L}\in\mathbb{R}^{n\times n}}\quad\sqrt{\sum_{(i,j)\in\Omega}(L_{ij}-M_{ij})^{2}}+\lambda\|\bm{L}\|_{*}.\label{eq:sqrt-mc}
\end{equation}
A notable difference from the vanilla least-squares estimator~\eqref{eq:mc}
is that \est~\eqref{eq:sqrt-mc} aims at minimizing the regularized
$\ell_{2}$ error instead of the regularized \emph{squared} $\ell_{2}$ error. 

\paragraph{Our contributions. }The main result of this paper (cf.~Theorem~\ref{thm:main}) shows
that \est~\eqref{eq:sqrt-mc} with a noise-size-oblivious choice
$\lambda\asymp1/\sqrt{n}$ (e.g., $\lambda=32/\sqrt{n}$) achieves
the optimal error guarantees for recovering the low-rank matrix $\bm{L}^{\star}$
over a wide range of noise sizes.
Such guarantees are on par with those established for the vanilla least-squares
estimator~\eqref{eq:mc} with a choice of $\lambda$ depending on the
noise size~\cite{chen2020noisy}. Clearly, the tuning-free property and statistical optimality
of \est~together answer our motivating question in the
affirmative. 

To put our contributions into context, we would like to immediately
point out two relevant pieces of prior work, while deferring other
related ones to Section~\ref{sec:Prior-art}. First and foremost,
a variant of the \est~estimator has been proposed and studied by
Klopp \cite{klopp2014noisy}, in which an extra element-wise max norm constraint
is added to the problem~\eqref{eq:sqrt-mc}. In the same paper, it
was shown that \est~achieves optimal statistical performance when
the size of the noise is sufficiently large compared to the entries
of the low-rank matrix. However, when the noise size is relatively
small, the upper bound proved therein fails to uncover the optimal
performance of the \est~estimator. In particular, it falls short
of uncovering the exact recovery property when there is no noise,
i.e., when $\bm{E}=\bm{0}$. More recently, Zhang et~al.~\cite{zhang2021square}
focuses on a closely related noisy robust PCA problem~\cite{candes2011robust,chen2021bridging}
and studies a similar tuning-free estimator. Their results, however,
even in the full observation setting (i.e., $\Omega=\{1,2,\ldots,n\}\times\{1,2,\ldots,n\}$),
has a poor dependence on the problem dimension $n$, which is far
from optimality. Detailed comparisons between our results and those
in the papers \cite{klopp2014noisy,zhang2021square} can be found
in Section~\ref{sec:Main-results}.

In establishing the optimal performance of \est, we make the following technical contributions. First, we introduce a new decision variable $\theta$ to convert a non-smooth loss function to a smooth one to facilitate later analysis. We then establish a novel connection between the convex \est~estimator and a smooth nonconvex estimator. In the end, we manage to show that an iterative algorithm allows one to find a statistically optimal solution to the nonconvex program. While this general proof strategy has been laid out in~\cite{chen2020noisy}, novel considerations need to be taken to handle the non-smooth loss function and the new decision variable $\theta$. We defer detailed discussions to relevant places in later analysis. 

\paragraph{Notation.} For a vector $\bm{v}$, we use $\|\bm{v}\|_{2}$
to denote its Euclidean norm. For a matrix $\bm{M}$, we use $\|\bm{M}\|$,$\|\bm{M}\|_{\mathrm{F}}$, and $\|\bm{M}\|_{\infty}$
to denote its spectral norm, Frobenius norm, and the elementwise $\ell_{\infty}$
norm. In addition, $\|\bm{M}\|_{2,\infty}$ denotes the largest $\ell_{2}$
norm of the rows. We also use $\sigma_{j}(\bm{M})$ to denote the
$j$-th largest singular value of $\bm{M}$.

Additionally, the standard notation $f(n)=O\left(g(n)\right)$ or $f(n)\lesssim g(n)$ means that there exists a constant $c>0$ such that $\left|f(n)\right|\leq c|g(n)|$,  $f(n)\gtrsim g(n)$ means that there exists a constant $c>0$ such that $|f(n)|\geq c\left|g(n)\right|$. Also, $f(n)\gg g(n)$ means that there exists some large enough constant $c>0$ such that $|f(n)|\geq c\left|g(n)\right|$. Similarly, $f(n)\ll g(n)$ means that there exists some sufficiently small constant $c>0$ such that $|f(n)|\leq c\left|g(n)\right|$.  %$f(n)=\Theta\left(g(n)\right)$ and $f(n)\asymp g(n)$ means that there exist constants $c_{1},c_{2}>0$ such that $c_{1}|g(n)|\leq|f(n)|\leq c_{2}|g(n)|$.  Also, $f(n)\gg g(n)$ means that there exists some large enough constant $c>0$ such that $|f(n)|\geq c\left|g(n)\right|$. Similarly, $f(n)\ll g(n)$ means that there exists some sufficiently small constant $c>0$ such that $|f(n)|\leq c\left|g(n)\right|$.

\section{Main results \label{sec:Main-results}}

We start with introducing the model assumptions for noisy matrix completion.
The first assumption is on the observation pattern. 

\begin{assumption}\label{assumption:p} Each index $(i,j)$ belongs
to the set $\Omega$ independently with probability $p$. \end{assumption}

The next assumption is concerned with the noise matrix.

\begin{assumption}\label{assumption:random-noise} 

The noise matrix $\bm{E}=[E_{ij}]$ is composed of i.i.d.~zero-mean
sub-Gaussian random variables with variance $\sigma^{2}$ and sub-Gaussian
norm $O(\sigma)$, i.e., $\|E_{i,j}\|_{\psi_{2}}=O(\sigma)$; see Definition 5.7 in the article~\cite{vershynin2010introduction}. 

\end{assumption}

% \begin{remark}Without loss of generality, we will assume $\|E_{i,j}\|_{\psi_{2}}=\sigma$
% for the remaining of this paper. This induces at most an extra constant
% factor in the error bounds.\end{remark} \cm{Why do we need this remark?}

In the end, we turn to the assumptions on the groundtruth matrix $\bm{L}^{\star}$.
Let $\sigma_{\min},\sigma_{\max}$ be the smallest and largest singular
values of $\bm{L}^{\star}$, respectively, and let $\kappa\coloneqq\sigma_{\max}/\sigma_{\min}$
be its condition number. We require the matrix $\bm{L}^{\star}$ to
be $\mu$-incoherent defined in the following way. 

\begin{assumption}\label{assumption:incoherence} The rank-$r$ matrix
$\bm{L}^{\star}$ with SVD $\bm{L}^{\star} = \bm{U}^{\star}\bm{\Sigma}^{\star}\bm{V}^{\star\top}$
is $\mu$-incoherent in the sense that 
\[
\|\bm{U}^{\star}\|_{2,\infty}\le\sqrt{\frac{\mu}{n}}\|\bm{U}^{\star}\|_{\mathrm{F}}=\sqrt{\frac{\mu r}{n}},\qquad\text{and}\qquad\|\bm{V}^{\star}\|_{2,\infty}\le\sqrt{\frac{\mu}{n}}\|\bm{V}^{\star}\|_{\mathrm{F}}=\sqrt{\frac{\mu r}{n}}.
\]

\end{assumption}

Now we are in position to state our main results regarding the \est~estimator, with the proof deferred to Section~\ref{sec:main-proof}.  

\begin{theorem}\label{thm:main} Suppose that Assumptions \ref{assumption:p}-\ref{assumption:incoherence}
hold. In addition, assume that the sample size and the noise level
satisfy 
\[
n^{2}p\ge C_{\mathrm{sample}}\kappa^{4}\mu^{2}r^{2}n\log^{3}n,\qquad\text{and}\qquad\frac{\sigma}{\sigma_{\min}}\sqrt{\frac{n}{p}}\le \frac{C_{\mathrm{noise}} }{ \sqrt{\kappa^{4}\mu r\log n} }
\]
for some sufficient large (resp.~small) constant $C_{\mathrm{sample}}>0$
(resp.~$C_{\mathrm{noise}}>0$). Set $\lambda=C_{\lambda}/\sqrt{n}$
for the \est~estimator~\eqref{eq:sqrt-mc}, where $C_{\lambda}$
is some large absoulute constant (e.g., 32). With probability at least $1-O(n^{-3})$,
any solution $\bm{L}_{\mathrm{cvx}}$ to the \est~problem \eqref{eq:sqrt-mc}
obeys 

\begin{subequations}
\begin{align}
\|\bm{L}_{\mathrm{cvx}}-\bm{L}^{\star}\|_{\mathrm{F}} & \le C_{\mathrm{F}}\kappa\frac{\sigma}{\sigma_{\min}}\sqrt{\frac{n}{p}}\|\bm{L}^{\star}\|_{\mathrm{F}};\label{eq:L-bound-fro}\\
\|\bm{L}_{\mathrm{cvx}}-\bm{L}^{\star}\|_{\infty} & \le C_{\mathrm{\infty}}\sqrt{\kappa^{3}\mu r}\frac{\sigma}{\sigma_{\min}}\sqrt{\frac{n\log n}{p}}\|\bm{L}^{\star}\|_{\mathrm{\infty}};\label{eq:L-bound-infty}\\
\|\bm{L}_{\mathrm{cvx}}-\bm{L}^{\star}\| & \le C_{\mathrm{op}}\frac{\sigma}{\sigma_{\min}}\sqrt{\frac{n}{p}}\|\bm{L}^{\star}\|.\label{eq:L-bound-spec}
\end{align}
\end{subequations} Here $C_{\mathrm{F}},C_{\infty},C_{\mathrm{op}}>0$
are three universal constants.\end{theorem}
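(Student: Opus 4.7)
My plan is to exploit the first-order optimality conditions of \est~to reduce it to a noise-calibrated instance of the vanilla nuclear-norm-regularized least-squares problem \eqref{eq:mc}, and then invoke the now-standard analysis of the vanilla estimator through its tight connection to a rank-$r$ nonconvex counterpart. Concretely, for any primal optimum $\bm{L}_{\mathrm{cvx}}$ of \eqref{eq:sqrt-mc} with nonzero residual, the subgradient condition reads
\[
\frac{\mathcal{P}_{\Omega}(\bm{L}_{\mathrm{cvx}}-\bm{M})}{\|\mathcal{P}_{\Omega}(\bm{L}_{\mathrm{cvx}}-\bm{M})\|_{\mathrm{F}}}+\lambda\bm{G}=\bm{0}
\]
for some $\bm{G}\in\partial\|\bm{L}_{\mathrm{cvx}}\|_{*}$. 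Equivalently, $\bm{L}_{\mathrm{cvx}}$ is a KKT point of the vanilla problem~\eqref{eq:mc} with an \emph{effective} regularizer $\tilde{\lambda}\coloneqq\lambda\|\mathcal{P}_{\Omega}(\bm{L}_{\mathrm{cvx}}-\bm{M})\|_{\mathrm{F}}$. The heart of the argument is to prove that $\tilde\lambda$ automatically lands near the statistically optimal choice $\sigma\sqrt{np}$ when $\lambda=C_{\lambda}/\sqrt{n}$, so that the best available guarantees for \eqref{eq:mc} become available for free.

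\textbf{Main steps.} First, I would introduce an auxiliary rank-$r$-constrained version $\hat{\bm{L}}_{\mathrm{ncvx}}$ of \est, and, via a leave-one-out / Burer--Monteiro analysis paralleling the one used for rank-constrained vanilla matrix completion, establish the Frobenius, entrywise $\ell_\infty$, and spectral bounds \eqref{eq:L-bound-fro}--\eqref{eq:L-bound-spec} directly for $\hat{\bm{L}}_{\mathrm{ncvx}}$. Second, I would use the Frobenius bound together with restricted-isometry-type control of $\mathcal{P}_\Omega$ on low-rank matrices to show that $\mathcal{P}_{\Omega}(\hat{\bm{L}}_{\mathrm{ncvx}}-\bm{L}^{\star})$ is a lower-order perturbation of $\mathcal{P}_{\Omega}(\bm{E})$, and hence
\[
\|\mathcal{P}_{\Omega}(\hat{\bm{L}}_{\mathrm{ncvx}}-\bm{M})\|_{\mathrm{F}}=(1+o(1))\|\mathcal{P}_{\Omega}(\bm{E})\|_{\mathrm{F}}\asymp \sigma n\sqrt{p},
\]
so that $\tilde\lambda\asymp \sigma\sqrt{np}$ sits in the optimal window. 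Third, starting from the top-$r$ SVD of $\hat{\bm{L}}_{\mathrm{ncvx}}$ I would build an approximate dual certificate---an element of the nuclear-norm subdifferential whose off-support part is spectrally small---and verify, using the value of $\tilde\lambda$ from step two, that the \est~KKT conditions are met. This certificate will show that $\hat{\bm{L}}_{\mathrm{ncvx}}$ is in fact the unique minimizer of the convex program \eqref{eq:sqrt-mc}, at which point \eqref{eq:L-bound-fro}--\eqref{eq:L-bound-spec} transfer verbatim.

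\textbf{Main obstacle.} The main difficulty is the self-referential coupling intrinsic to the square-root loss: the effective dual multiplier $\tilde\lambda$ depends on the residual of the solution itself, so, unlike in the vanilla problem, one cannot choose $\tilde\lambda$ in advance and then certify optimality. I would resolve this by a bootstrap / fixed-point argument---first deriving a rough Frobenius estimate that pins $\|\mathcal{P}_{\Omega}(\bm{L}_{\mathrm{cvx}}-\bm{M})\|_{\mathrm{F}}$ in a wide corridor around $\|\mathcal{P}_{\Omega}(\bm{E})\|_{\mathrm{F}}$, then tightening it via the sharper vanilla-case bounds now applicable with the localized $\tilde\lambda$, and iterating until the effective regularizer is pinned to $\sigma\sqrt{np}$ up to constants. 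A related subtlety is the near-noiseless regime $\sigma\to 0$ allowed by the theorem, where the residual shrinks and the square-root subgradient degenerates; here the dual certificate must be reconstructed in a limiting fashion (or the exact-recovery regime handled as a separate case), after which the $\ell_\infty$ and spectral bounds follow by the same leave-one-out mechanism that delivered them for $\hat{\bm{L}}_{\mathrm{ncvx}}$.
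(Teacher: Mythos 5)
Your overall strategy---pass to a rank-$r$ Burer--Monteiro surrogate of \est, show its residual concentrates around $\|\mathcal{P}_{\Omega}(\bm{E})\|_{\mathrm{F}}\asymp\sigma n\sqrt{p}$ so that the square-root loss self-calibrates the effective multiplier $\tilde\lambda=\lambda\|\mathcal{P}_{\Omega}(\cdot-\bm{M})\|_{\mathrm{F}}$, and then certify the convex program at the nonconvex point through a tangent-space/dual argument---is the same philosophy as the paper's. But two steps do not go through as stated. First, you propose to establish the Frobenius, spectral, entrywise, and residual bounds ``directly'' for the exact rank-constrained minimizer $\hat{\bm{L}}_{\mathrm{ncvx}}$ by a leave-one-out analysis. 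Leave-one-out arguments control the iterates of an algorithm initialized at (or near) the truth, not exact minimizers of a nonconvex program; there is no off-the-shelf way to run them on $\hat{\bm{L}}_{\mathrm{ncvx}}$ itself. The paper resolves precisely this by defining $\bm{L}_{\mathrm{ncvx}}$ as an \emph{approximate} stationary point produced by $n^{18}$ gradient steps (Algorithm~\ref{algo:non-cvx-gd}), propagating the incoherence/$\ell_{2,\infty}$ bounds and the two-sided residual control $\tfrac12 n\sqrt{p}\sigma\le\theta_t\le 2n\sqrt{p}\sigma$ by induction along the iterates and their leave-one-out copies (Lemma~\ref{lemma:induction}), and only then selecting the iterate with smallest gradient.

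Second, because the nonconvex point one can actually analyze has small but nonzero gradient, the dual certificate you build is only approximate, and an approximate certificate cannot show that $\hat{\bm{L}}_{\mathrm{ncvx}}$ is the exact (let alone unique) minimizer of \eqref{eq:sqrt-mc}; exact coincidence would require $\|\nabla g\|_{\mathrm{F}}=0$ (see the remark after Lemma~\ref{lemma:ncvx-to-cvx-full}). What your ``bounds transfer verbatim'' step is missing is the quantitative perturbation bound $\|\bm{L}_{\mathrm{cvx}}-\bm{L}_{\mathrm{ncvx}}\|_{\mathrm{F}}\lesssim\frac{\lambda\kappa^{2}}{\sqrt{p\sigma_{\min}}}\,n\sigma\,\|\nabla g\|_{\mathrm{F}}$, proved via the residual decomposition (Lemma~\ref{lemma:residual-size-in-T}) combined with injectivity of $\mathcal{P}_{\Omega}$ on the tangent space at $\bm{L}_{\mathrm{ncvx}}$ (Lemma~\ref{lemma:injectivity}); note also that the theorem is stated for \emph{any} solution $\bm{L}_{\mathrm{cvx}}$, so uniqueness is neither needed nor the right target. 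Relatedly, your fallback bootstrap on $\|\mathcal{P}_{\Omega}(\bm{L}_{\mathrm{cvx}}-\bm{M})\|_{\mathrm{F}}$ is circular in the hard direction: optimality of $\bm{L}_{\mathrm{cvx}}$ only yields an upper corridor (and even that is off by $\lambda\|\bm{L}^{\star}\|_{*}$ in the small-$\sigma$ regime where Klopp's bound already degrades), while nothing a priori rules out overfitting, i.e.\ a residual far below $\|\mathcal{P}_{\Omega}(\bm{E})\|_{\mathrm{F}}$; the paper obtains two-sided control of $\theta$ only along the nonconvex iterates and transfers it to the convex solution after the proximity step. In particular, the entrywise guarantee \eqref{eq:L-bound-infty} cannot be recovered simply by citing vanilla-estimator results at the effective $\tilde\lambda$ without this apparatus.
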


Several remarks on Theorem~\ref{thm:main} are in order. 

\paragraph{Minimax-optimal $\ell_{\mathrm{F}}$ estimation error.}

When the condition number $\kappa$ is of a constant order, the \est~estimator
enjoys minimax-optimal $\ell_{\mathrm{F}}$ estimation error \cite{negahban2012restricted,chen2020noisy}.
In contrast, the upper bound in the paper~\cite{klopp2014noisy}
reads $\|\bm{L}_{\mathrm{cvx}}-\bm{L}^{\star}\|_{\mathrm{F}}\lesssim\max\left\{ \sigma,\|\bm{L}^{\star}\|_{\infty}\right\} \sqrt{n\log n/p}$,
which is only statistically optimal when $\sigma\gtrsim\|\bm{L}^{\star}\|_{\infty}$. 
In addition, translating the bound in the paper \cite{zhang2021square} from robust
PCA to the matrix completion setting, one obtains $\|\bm{L}_{\mathrm{cvx}}-\bm{L}^{\star}\|_{\mathrm{F}}\lesssim\sigma n^{2}$,
which has a much worse (and hence sub-optimal) dependence on the problem dimension $n$. 

\paragraph{Noise, sample complexity, and dependency on $\kappa, r$.}
Our assumption on noise level and sample complexity is consistent with \cite{chen2020noisy}. Furthermore, these assumptions are necessary for a non-trivial guarantee as otherwise, a naive zero estimator would achieve the optimal rate. Regarding $\kappa$ and $r$, while we mostly focus on the case where they are of constant size, their dependency on the error rate can be of interest. In particular, the dependency on $r^2$ is the best rate known and is consistent with some other nonconvex methods \cite{zheng2016convergence, chen2020nonconvex}. However, the exact sharp dependency of both $r$ and $\kappa$ remains an open problem. \cite{chen2020noisy} also discusses these in their marks in Section 1. 

\paragraph{Tuning-free property.}

More importantly, the optimal performance of \est~is
achieved in a completely tuning-free fashion. The regularization parameter
$\lambda$ can be set to be $32/\sqrt{n}$, that does not depend on
the noise variance $\sigma^{2}$, the observation probability $p$,
nor the true rank~$r$ of the matrix $\bm{L}^{\star}$. This is in stark contrast to the vanilla
nuclear norm regularized least-squares estimator~\eqref{eq:mc} in which $\lambda$ is set to be on the order of $\sigma \sqrt{n p}$ (cf.~\cite{chen2020noisy}). 

\paragraph{Entrywise error guarantees.}

Also, our main results provide upper bounds on the entrywise estimation
error (cf.~bound~\eqref{eq:L-bound-infty}). Compared to the $\ell_{\mathrm{F}}$ estimation error~\eqref{eq:L-bound-fro}, it can
be seen that the \est~estimator is uniformly good in the sense that
there is no spiky entry estimate with large estimation error. 

\bigskip 
To corroborate our main results, we perform numerical experiments
on noisy matrix completion with simulated data. We fix the rank $r$ to be 5 
throughout the experiment. For
each problem dimension $n$, we generate two $n\times r$ random orthonormal
matrices as $\bm{X}^{\star}$ and $\bm{Y}^{\star}$ and take $\bm{L}^{\star}\coloneqq\bm{X}^{\star}\bm{Y}^{\star\top}$
as the rank-$r$ $n\times n$ groundtruth matrix. The entrywise noise is taken to be Gaussian with variance $\sigma^{2}$. For all the experiments, we set $\lambda=4/\sqrt{n}$ in \est, and report the average results over 20 Monte-Carlo simulations.  
Figure~\ref{fig:relative-error-n-sigma} reports the relative error
of the \est~estimator in Frobenius, spectral, and infinity norms.
More specifically, Figure~\ref{fig:relative-error-n-sigma}(a) fixes $n=500$, $p=0.5$, and varies $\sigma$; Figure~\ref{fig:relative-error-n-sigma}(b) fixes $\sigma=10^{-4}$, $p=0.5$, and varies $n$; Figure~\ref{fig:relative-error-n-sigma}(c) fixes $\sigma=10^{-4}$, $n=2000$, and varies $p$. Overall, the plots showcase a linear relationship between the
performance and the noise size $\sigma$, the problem dimension
$\sqrt{n}$, and the observation probability $p$. This is consistent with the $O(\sigma\sqrt{n/p})$ scaling
proved in Theorem~\ref{thm:main}. 

\begin{figure}
\begin{minipage}[t]{0.33\columnwidth}%
\begin{center}
\includegraphics[width=1\textwidth]{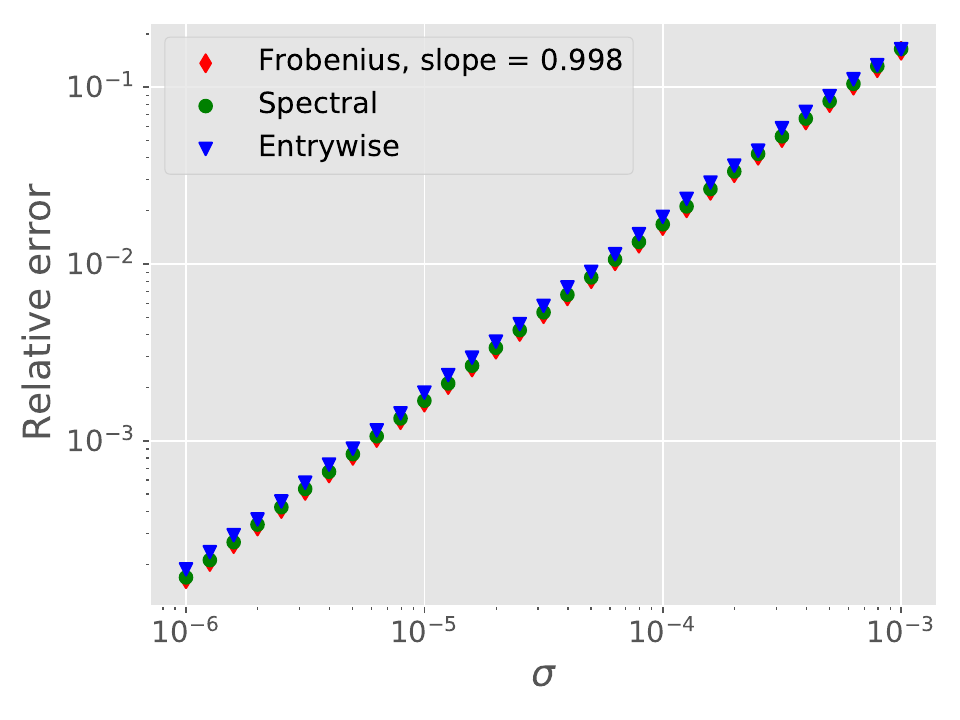}
\par\end{center}
\begin{center}
(a)
\par\end{center}%
\end{minipage}\hfill{}%
\begin{minipage}[t]{0.33\columnwidth}%
\begin{center}
\includegraphics[width=1\textwidth]{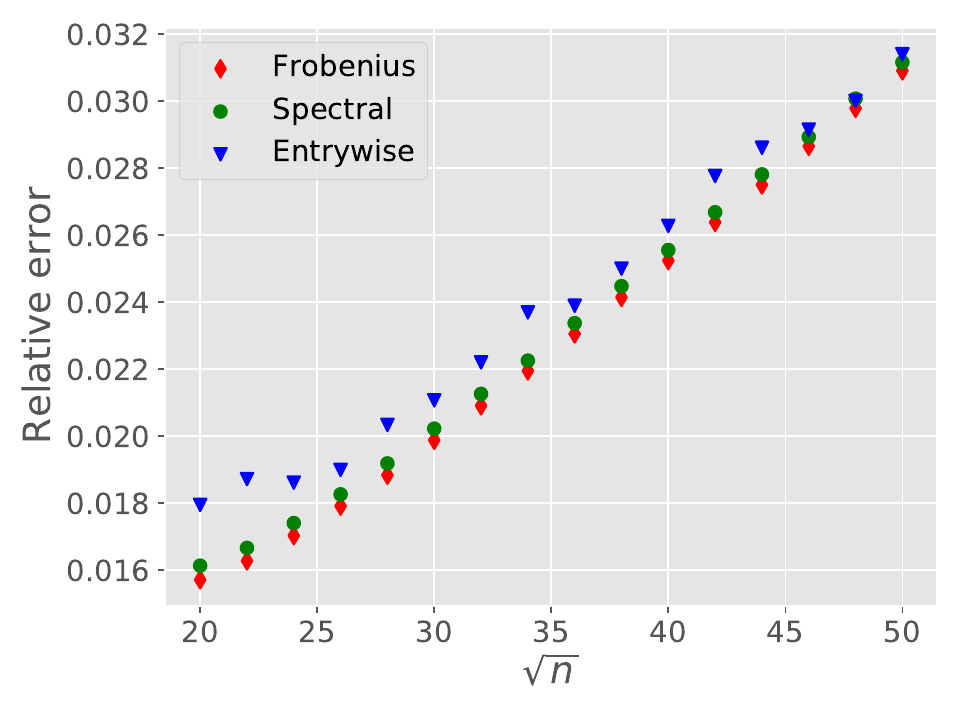}
\par\end{center}
\begin{center}
(b)
\par\end{center}%
\end{minipage}
\hfill{}%
\begin{minipage}[t]{0.33\columnwidth}%
\begin{center}
\includegraphics[width=1\textwidth]{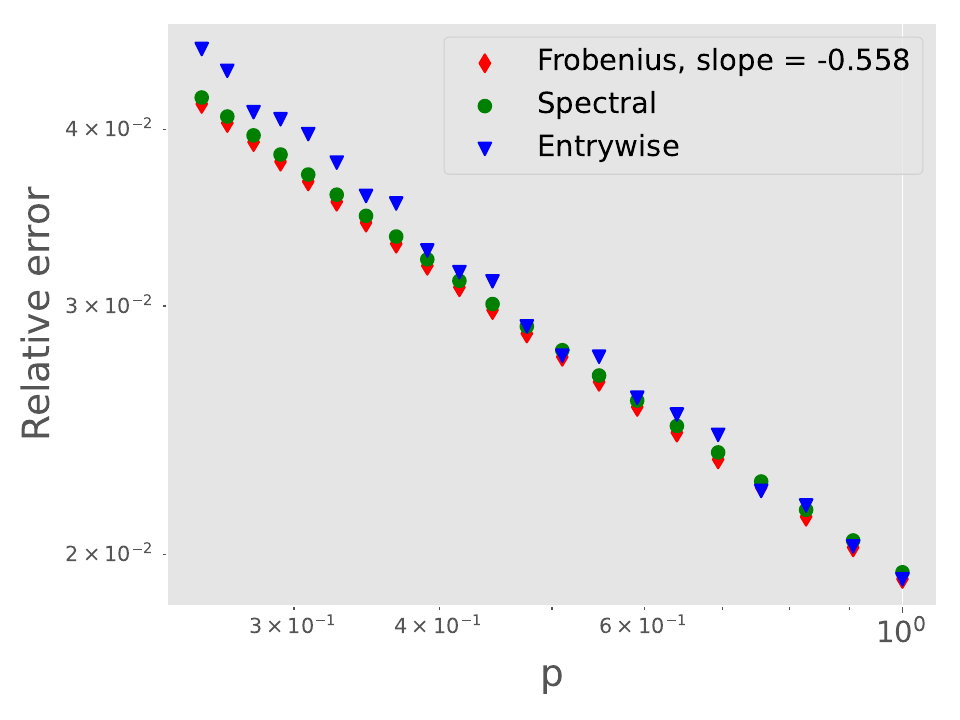}
\par\end{center}
\begin{center}
(c)
\par\end{center}%
\end{minipage}

\caption{\label{fig:relative-error-n-sigma}(a) Relative estimation error of $\bm{L}_{\mathrm{cvx}}$
vs.~noise size $\sigma$ on a log-log scale, where we fix $n=500,r=5,p=0.5$;
(b) Relative estimation error of $\bm{L}_{\mathrm{cvx}}$
vs.~problem size $\sqrt{n}$, where we fix $r=5,\sigma=10^{-4},p=0.5$;
(c) Relative estimation error of $\bm{L}_{\mathrm{cvx}}$
vs.~observation probability $p$ on a log-log scale, where we fix $n=2000, r=5,\sigma=10^{-4}$.
For all three plots, $\lambda=4/\sqrt{n}$ and each point represents the
average of 20 independent trials. }
\end{figure}

\section{Outline of the proof } \label{sec:main-proof}

In this section, we provide the key steps for proving our main result,
i.e., Theorem~\ref{thm:main}. The proof follows the general strategy
of bridging convex and nonconvex solutions, first appeared in the
paper~\cite{chen2020noisy}, with several important modifications
to handle the non-smooth $\ell_{\mathrm{F}}$ norm (as opposed to
the smooth squared $\ell_{\mathrm{F}}$ norm). 

A central object in our analysis is the following nonconvex optimization
problem 

\begin{equation}
\min_{\text{\ensuremath{\bm{X}},\ensuremath{\bm{Y}\in\mathbb{R}^{n\times r}}, \ensuremath{\theta>0} }}\quad f(\boldsymbol{X},\boldsymbol{Y},\theta)\coloneqq\frac{1}{2}\left(\frac{\|\mathcal{P}_{\Omega}(\bm{X}\bm{Y}^{\top}-\bm{M})\|_{\mathrm{F}}^{2}}{\theta}+\theta\right)+\frac{\lambda}{2}\left(\|\bm{X}\|_{\mathrm{F}}^{2}+\|\bm{Y}\|_{\mathrm{F}}^{2}\right),\label{eq:nonconvex-opt-theta}
\end{equation}
which is closely related to the original
convex \est~formulation~\eqref{eq:sqrt-mc}. To see this, first, for any rank-$r$
matrix $\bm{Z}$, one has 
\[
\|\bm{Z}\|_{*}=\inf_{\ensuremath{\bm{X}},\ensuremath{\bm{Y}\in\mathbb{R}^{n\times r}}:\bm{X}\bm{Y}^{\top}=\bm{Z}}\tfrac{1}{2}\left(\|\bm{X}\|_{\mathrm{F}}^{2}+\|\bm{Y}\|_{\mathrm{F}}^{2}\right).
\]
Second and more importantly, we have that for any matrix $\bm{Z}=\bm{X}\bm{Y}^{\top}$,
\[
\|\mathcal{P}_{\Omega}(\bm{Z}-\bm{M})\|_{\mathrm{F}}=\inf_{\theta>0}\quad\frac{1}{2}\left(\frac{\|\mathcal{P}_{\Omega}(\bm{X}\bm{Y}^{\top}-\bm{M})\|_{\mathrm{F}}^{2}}{\theta}+\theta\right).
\]
It turns out that the (approximate) solution to the nonconvex optimization
problem~\eqref{eq:nonconvex-opt-theta} serves as an extremely tight
approximation to the \est~estimator, which facilitates the statistical
analysis of the latter. 

In sum, our proof involves two main steps:
\begin{enumerate}
\item We first show---via an explicit construction---that an approximate
stationary point $\bm{L}_{\mathrm{ncvx}}$ of the nonconvex problem
\eqref{eq:nonconvex-opt-theta} exists and is also close to the groundtruth
matrix $\bm{L}^{\star}$. 
\item We then establish that such an approximate stationary point $\bm{L}_{\mathrm{ncvx}}$
is extremely close to the solution $\bm{L}_{\mathrm{cvx}}$ to the
convex problem \eqref{eq:sqrt-mc}.
\end{enumerate}
Combining the two key steps via triangle inequality finishes the proof. 

\paragraph*{Step 1: Nonconvex optimization.}

The nonconvex optimization problem \eqref{eq:nonconvex-opt-theta}
has two groups of decision variables, i.e., $(\bm{X},\bm{Y})$ and
$\theta$. Also note that given a fixed pair $(\bm{X},\bm{Y})$, the
optimal choice of $\theta$ is simply given by $\theta=\|\mathcal{P}_{\Omega}(\bm{X}\bm{Y}^{\top}-\bm{M})\|_{\mathrm{F}}$.
Therefore it is natural to consider an alternating minimization method
to construct an approximate stationary point of the nonconvex program
\eqref{eq:nonconvex-opt-theta}; see Algorithm \ref{algo:non-cvx-gd}.
Given a current iterate $(\bm{X}_{t},\bm{Y}_{t},\theta_{t})$, the
algorithm first runs one step of gradient descent on $(\bm{X},\bm{Y})$
while fixing $\theta_{t}$. It then updates $\theta_{t+1}=\|\mathcal{P}_{\Omega}(\bm{X}_{t+1}\bm{Y}_{t+1}^{\top}-\bm{M})\|_{\mathrm{F}}$
to be the optimal choice given the new iterate $(\bm{X}_{t+1},\bm{Y}_{t+1})$.
In the end, Algorithm \ref{algo:non-cvx-gd} returns the point $\bm{L}_{\mathrm{ncvx}}$
with the smallest gradient among the iterates as an approximate stationary
point. 

\begin{algorithm}[t]
\caption{\label{algo:non-cvx-gd}Gradient descent on the nonconvex formulation
of square root matrix completion}

\textbf{Input:} initialization$\bm{X}_{0}=\bm{X}^{\star},\bm{Y}_{0}=\bm{Y}^{\star},\theta_{0}=\|\mathcal{P}_{\Omega}(\bm{X}^{\star}\bm{Y}^{\star\top}-\bm{M})\|_{\mathrm{F}}$,
step size $\eta\asymp\sigma/(\sqrt{p}\kappa^{3}\sigma_{\max})$, and
total number of iterations $t_{0}=n^{18}$.

\textbf{Gradient updates: for $t=0,1,\ldots,t_{0}-1$ do} \begin{subequations}

\begin{align}
\bm{X}_{t+1} & =\bm{X}_{t}-\eta\nabla_{\bm{X}}f(\bm{X}_{t},\bm{Y}_{t},\theta_{t})=\bm{X}_{t}-\eta\left(\tfrac{1}{\theta_{t}}\mathcal{P}_{\Omega}(\bm{X}_{t}\bm{Y}_{t}^{\top}-\bm{M})\bm{Y}_{t}+\lambda\bm{X}_{t}\right);\label{eq:GD-X}\\
\bm{Y}_{t+1} & =\bm{Y}_{t}-\eta\nabla_{\bm{Y}}f(\bm{X}_{t},\bm{Y}_{t},\theta_{t})=\bm{Y}_{t}-\eta\left(\tfrac{1}{\theta_{t}}\left[\mathcal{P}_{\Omega}(\bm{X}_{t}\bm{Y}_{t}^{\top}-\bm{M})\right]^{\top}\bm{X}_{t}+\lambda\bm{Y}_{t}\right);\label{eq:GD-Y}\\
\theta_{t+1} & =\|\mathcal{P}_{\Omega}(\bm{X}_{t+1}\bm{Y}_{t+1}^{\top}-\bm{M})\|_{\mathrm{F}}.\label{eq:GD-theta}
\end{align}

\end{subequations}

\textbf{Define
\[
t^{\star}\coloneqq\arg\min_{0\le t\le t_{0}}\|\nabla_{\bm{X},\bm{Y}}f(\bm{X}_{t},\bm{Y}_{t},\theta_{t})\|_{\mathrm{F}},
\]
where 
\[
\nabla_{\bm{X},\bm{Y}}f(\bm{X}_{t},\bm{Y}_{t},\theta_{t})=\begin{bmatrix}\frac{1}{\theta_{t}}\mathcal{P}_{\Omega}(\bm{X}_{t}\bm{Y}_{t}^{\top}-\bm{M})\bm{Y}_{t}+\lambda\bm{X}_{t}\\
\frac{1}{\theta_{t}}\left[\mathcal{P}_{\Omega}(\bm{X}_{t}\bm{Y}_{t}^{\top}-\bm{M})\right]^{\top}\bm{X}_{t}+\lambda\bm{Y}_{t}
\end{bmatrix}.
\]
}

\textbf{Output:} $\bm{L}_{\mathrm{ncvx}}\coloneqq\bm{X}_{t^{\star}}\bm{Y}_{t^{\star}}^{\top}$, $\bm{X}_{\mathrm{ncvx}} \coloneqq \bm{X}_{t^{\star}}$, and $\bm{Y}_{\mathrm{ncvx}} \coloneqq \bm{Y}_{t^{\star}}$.
\end{algorithm}

The following lemma ensures that $\bm{L}_{\mathrm{ncvx}}$ is an approximate
stationary point of the nonconvex problem and more importantly is
close to the groundtruth matrix $\bm{L}^{\star}$. The proof is deferred
to Section \ref{subsec:Proof-of-Lemma-ncvx-XY-L}. 

\begin{lemma}\label{lemma:ncvx-iterates-bound-XY-L}

Instate the assumptions of Theorem~\ref{thm:main}. With probability
at least $1-O(n^{-3})$, one has 

\begin{subequations}

\begin{align}
\|\bm{L}_{\mathrm{ncvx}}-\bm{L}^{\star}\|_{\mathrm{F}} & \le3\kappa C_{\mathrm{F}}\left(\frac{\sigma}{\sigma_{\min}}\sqrt{\frac{n}{p}}\right)\|\bm{L}^{\star}\|_{\mathrm{F}},\label{eq:ncvx-iterates-bound-XY-L-fro}\\
\|\bm{L}_{\mathrm{ncvx}}-\bm{L}^{\star}\|_{\mathrm{\infty}} & \le3\sqrt{\kappa^{3}\mu r}C_{\mathrm{\infty}}\left(\frac{\sigma}{\sigma_{\min}}\sqrt{\frac{n\log n}{p}}\right)\|\bm{L}^{\star}\|_{\mathrm{\infty}},\label{eq:ncvx-iterates-bound-XY-L-infty}\\
\|\bm{L}_{\mathrm{ncvx}}-\bm{L}^{\star}\| & \le3C_{\mathrm{op}}\left(\frac{\sigma}{\sigma_{\min}}\sqrt{\frac{n}{p}}\right)\|\bm{L}^{\star}\|,\label{eq:ncvx-iterates-bound-XY-L-spec}
\end{align}
where $C_{\mathrm{F}},C_{\infty},C_{\mathsf{op}}$ are three universal
positive constants. 

\end{subequations}

\end{lemma}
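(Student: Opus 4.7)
The plan is to adapt the leave-one-out analysis of gradient descent developed in \cite{chen2020noisy} for the nuclear-norm regularized least-squares formulation to the square-root loss by treating the scalar $\theta_t$ as a slowly-varying rescaling. The key observation is that if $\theta_t$ stays close throughout all iterations to its initial value $\theta_0 = \|\mathcal{P}_{\Omega}(\bm{E})\|_{\mathrm{F}}$, then the update~\eqref{eq:GD-X}--\eqref{eq:GD-Y} coincides with gradient descent on the smooth squared-loss objective with an effective regularization parameter $\lambda_{\mathrm{eff}} = \theta_t \lambda$. By standard sub-Gaussian concentration, $\|\mathcal{P}_{\Omega}(\bm{E})\|_{\mathrm{F}} \asymp \sigma n \sqrt{p}$ with high probability, and with $\lambda = C_\lambda/\sqrt{n}$ we have $\lambda_{\mathrm{eff}} \asymp \sigma \sqrt{np}$, which is precisely the minimax-optimal choice for the standard squared-loss matrix completion problem. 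This matches the step size $\eta \asymp \sigma/(\sqrt{p}\,\kappa^3\sigma_{\max})$ with the correct scaling of a gradient step on the squared loss.

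First I would establish a high-probability two-sided bound on $\theta_0$. Then, by induction on $t$, I would simultaneously maintain four invariants on the rotated factor iterates $\bm{X}_t \bm{H}_t - \bm{X}^\star$ and $\bm{Y}_t \bm{H}_t - \bm{Y}^\star$ (with $\bm{H}_t$ the best orthogonal alignment): a $\bm{\Sigma}^{\star 1/2}$-weighted Frobenius bound, a spectral bound, and a leave-one-out-based $\ell_{2,\infty}$ bound, together with the auxiliary invariant $|\theta_t - \theta_0| \ll \theta_0$. The $\theta$-invariant follows from the triangle inequality
\begin{equation*}
|\theta_t - \theta_0| \le \|\mathcal{P}_{\Omega}(\bm{X}_t \bm{Y}_t^\top - \bm{X}^\star \bm{Y}^{\star\top})\|_{\mathrm{F}},
\end{equation*}
where the right-hand side is controlled via the first three invariants in conjunction with standard $\mathcal{P}_\Omega$-concentration inequalities. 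Given these invariants, the propagation of the first three follows line-by-line from the leave-one-out contraction argument of \cite{chen2020noisy}, with the $1/\theta_t$ factor absorbed into the effective gradient scaling.

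The bounds on $\bm{L}_t - \bm{L}^\star$ in Frobenius, spectral, and $\ell_\infty$ norms at the end of the induction then follow from combining the factor-level bounds via the standard identity $\bm{L}_t - \bm{L}^\star = (\bm{X}_t\bm{H}_t - \bm{X}^\star)(\bm{Y}_t\bm{H}_t)^\top + \bm{X}^\star(\bm{Y}_t\bm{H}_t - \bm{Y}^\star)^\top$. Finally, since $\bm{L}_{\mathrm{ncvx}}$ is picked as the iterate with the smallest gradient among $t_0 = n^{18}$ iterates and the objective is bounded below while decreasing by at least $\eta \|\nabla f\|_{\mathrm{F}}^2/2$ per step (up to the $\theta$-variation), a pigeonhole argument yields that the minimum gradient norm is polynomially small in $n$---a property that will be crucial in Step 2 for bridging $\bm{L}_{\mathrm{ncvx}}$ and $\bm{L}_{\mathrm{cvx}}$.

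The main obstacle will be jointly propagating the four invariants through the $\theta_t$-coupled dynamics: errors in $\theta_t$ feed back multiplicatively into the $(\bm{X},\bm{Y})$-update, and conversely any deterioration of the factor estimates amplifies $|\theta_t - \theta_0|$. Controlling this two-way coupling requires that the $\theta$-fluctuation be of strictly smaller order than the tolerance in the other invariants, which is what makes the noise-size constraint $\sigma/\sigma_{\min}\sqrt{n/p} \le C_{\mathrm{noise}}/\sqrt{\kappa^4\mu r\log n}$ essential. A secondary technical subtlety is that the leave-one-out auxiliary sequences must define their own $\theta_t^{(l)}$; consistency between the true and leave-one-out $\theta$'s needs to be established separately, but only requires a mild perturbation argument since the two $\mathcal{P}_\Omega$-projections differ only on the $l$-th row and column.
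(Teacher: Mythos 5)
Your proposal follows essentially the same route as the paper: view each iteration as a gradient step on the squared-loss objective of~\cite{chen2020noisy} with effective parameters $\tilde{\lambda}_t=\lambda\theta_t\asymp\sigma\sqrt{np}$ and $\tilde{\eta}_t=\eta/\theta_t$, propagate the factor-level Frobenius/spectral/leave-one-out/$\ell_{2,\infty}$ invariants jointly with a two-sided bracket on $\theta_t$ (obtained exactly by your triangle-inequality argument combined with the entrywise bound and concentration of $\|\mathcal{P}_\Omega(\bm{E})\|_{\mathrm{F}}$), convert factor bounds to matrix bounds, and use a descent-plus-pigeonhole argument for the small-gradient property. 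The only deviation is a detail of the leave-one-out construction: the paper deliberately feeds the \emph{original} sequence's $\theta_t$ into the leave-one-out updates so that the comparison lemmas of~\cite{chen2020noisy} apply verbatim, rather than defining separate $\theta_t^{(l)}$ and running the extra perturbation argument you sketch.
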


\paragraph*{Step 2: Bridging convex and nonconvex solutions. }

It remains to show that $\bm{L}_{\mathrm{ncvx}}$ is extremely close
to the convex solution $\bm{L}_{\mathrm{cvx}}$, which is provided
in the following lemma. 

\begin{lemma}\label{lemma:ncvx-cvx}Instate the assumptions of Theorem~\ref{thm:main}.
With probability exceeding $1-O(n^{-3})$, one has 
\[
\left\Vert \bm{L}_{\mathrm{ncvx}}-\bm{L}_{\mathrm{cvx}}\right\Vert _{\mathrm{F}}\le\frac{1}{n^{5}}\frac{\lambda\sigma}{\sigma_{\min}}\|\bm{L}^{\star}\|_{\mathrm{F}}.
\]

\end{lemma}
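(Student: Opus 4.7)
The plan follows the bridging strategy of \cite{chen2020noisy}, adapted to the non-smooth square-root loss, and proceeds in three steps. Step~1 builds approximate KKT for the convex problem at $\bm{L}_\mathrm{ncvx}$. Standard descent analysis on the $t_0 = n^{18}$ iterations of Algorithm~\ref{algo:non-cvx-gd} gives $\|\nabla_{\bm{X},\bm{Y}} f(\bm{X}_\mathrm{ncvx}, \bm{Y}_\mathrm{ncvx}, \theta_\mathrm{ncvx})\|_F \leq n^{-C}$ for arbitrarily large $C$. Combined with the balance property (preserved by the gradient flow starting from the symmetric initialization $\bm{X}_0=\bm{X}^\star$, $\bm{Y}_0=\bm{Y}^\star$) and the SVD $\bm{L}_\mathrm{ncvx} = \bm{U}_\mathrm{ncvx}\bm{\Sigma}_\mathrm{ncvx}\bm{V}_\mathrm{ncvx}^\top$, one derives the identity
\[
\frac{\mathcal{P}_\Omega(\bm{L}_\mathrm{ncvx}-\bm{M})}{\theta_\mathrm{ncvx}} + \lambda\bigl(\bm{U}_\mathrm{ncvx}\bm{V}_\mathrm{ncvx}^\top + \bm{W}^\star\bigr) = \bm{E}_T,
\]
with $\bm{W}^\star \in T^\perp$, $\|\bm{W}^\star\|_\mathrm{op} < 1/2$, and $\|\bm{E}_T\|_F \lesssim n^{-C'}$ for a large $C'$. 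The spectral check $\|\bm{W}^\star\|_\mathrm{op} < 1/2$ uses $\|\mathcal{P}_\Omega(\bm{L}_\mathrm{ncvx}-\bm{M})\|_\mathrm{op} \lesssim \sigma\sqrt{np}$ (from Lemma~\ref{lemma:ncvx-iterates-bound-XY-L} and noise concentration), $\theta_\mathrm{ncvx} \asymp \sigma n\sqrt{p}$, and $\lambda \asymp 1/\sqrt{n}$.

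Step~2 sandwiches via the auxiliary quadratic $h(\bm{L}) := \|\mathcal{P}_\Omega(\bm{L}-\bm{M})\|_F^2/(2\theta_\mathrm{ncvx}) + \theta_\mathrm{ncvx}/2 + \lambda\|\bm{L}\|_*$. The variational identity $\|\bm{u}\|_2 = \inf_{\theta>0}(\|\bm{u}\|_2^2/(2\theta) + \theta/2)$ gives $g \leq h$ pointwise with equality at $\bm{L}_\mathrm{ncvx}$; combined with $g(\bm{L}_\mathrm{cvx}) \leq g(\bm{L}_\mathrm{ncvx})$, this yields $h(\bm{L}_\mathrm{cvx}) - h(\bm{L}_\mathrm{ncvx}) \leq (v_\star - \theta_\mathrm{ncvx})^2/(2\theta_\mathrm{ncvx})$, where $v_\star := \|\mathcal{P}_\Omega(\bm{L}_\mathrm{cvx}-\bm{M})\|_F$. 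Expanding the quadratic $h$ around $\bm{L}_\mathrm{ncvx}$, substituting the approximate KKT from Step~1 and the nuclear-norm subgradient inequality $\|\bm{L}_\mathrm{cvx}\|_* \geq \|\bm{L}_\mathrm{ncvx}\|_* + \langle\bm{U}_\mathrm{ncvx}\bm{V}_\mathrm{ncvx}^\top, \Delta\rangle + \|\mathcal{P}_{T^\perp}(\Delta)\|_*$, and using $\|\bm{W}^\star\|_\mathrm{op} < 1/2$ to absorb the dual-certificate contribution leads to the master inequality
\[
\tfrac{\lambda}{2}\|\mathcal{P}_{T^\perp}(\Delta)\|_* + \tfrac{\|\mathcal{P}_\Omega(\Delta)\|_F^2 - (v_\star - \theta_\mathrm{ncvx})^2}{2\theta_\mathrm{ncvx}} \leq \|\bm{E}_T\|_F\|\Delta\|_F,
\]
with $\Delta := \bm{L}_\mathrm{cvx} - \bm{L}_\mathrm{ncvx}$ and both left-hand terms nonnegative.

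Step~3 closes the bound. The first left-hand term restricts $\Delta$ (up to tiny slack) to a tangent cone, on which standard matrix-completion RSC gives $\|\mathcal{P}_\Omega(\bm{H})\|_F^2 \gtrsim p\|\bm{H}\|_F^2$. A Taylor expansion of the square root produces $\|\mathcal{P}_\Omega(\Delta)\|_F^2 - (v_\star - \theta_\mathrm{ncvx})^2 \geq (1-o(1))\|\mathcal{P}_{\bm{a}^\perp}\mathcal{P}_\Omega(\Delta)\|_F^2$ with $\bm{a} := \mathcal{P}_\Omega(\bm{L}_\mathrm{ncvx}-\bm{M})$, and since $\bm{a}$ is dominated by the incoherent random noise $\mathcal{P}_\Omega(\bm{E})$, a uniform concentration over the cone gives $\langle\bm{a}, \mathcal{P}_\Omega(\bm{H})\rangle^2/\|\bm{a}\|_F^2 \lesssim (r\log n/n)\|\bm{H}\|_F^2$, which is $o(p\|\bm{H}\|_F^2)$ under the sample-complexity assumption. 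Hence $\bm{a}$-orthogonalization removes only lower-order mass, and the second LHS term is $\gtrsim p\|\Delta\|_F^2/\theta_\mathrm{ncvx}$. Combining yields $\|\Delta\|_F \lesssim \theta_\mathrm{ncvx}\|\bm{E}_T\|_F/p$, and plugging in the estimates for $\theta_\mathrm{ncvx}$ and $\|\bm{E}_T\|_F$ (choosing $C'$ sufficiently large) gives the claimed $n^{-5}$-scale bound.

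The main obstacle is Step~3: the square-root loss has a rank-one Hessian degeneracy in the direction $\bm{a}$, which the smooth-loss analysis of \cite{chen2020noisy} does not face. A cleaner alternative sidesteps this by factoring through the minimizer $\tilde{\bm{L}} := \operatorname{arg\,min}_{\bm{L}} h(\bm{L})$: RSC of $h$ yields $\|\bm{L}_\mathrm{ncvx} - \tilde{\bm{L}}\|_F \lesssim \theta_\mathrm{ncvx}\|\bm{E}_T\|_F/p$; sensitivity of the nuclear-norm regularized LS solution to the regularization weight $\lambda\theta$ yields $\|\tilde{\bm{L}} - \bm{L}_\mathrm{cvx}\|_F \lesssim \sqrt{r}\lambda|\theta_\mathrm{ncvx} - v_\star|/p$; and $|\theta_\mathrm{ncvx} - v_\star| \lesssim \sqrt{p}\|\Delta\|_F$ closes a self-bounding inequality when the sample complexity is large enough.
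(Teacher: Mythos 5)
Your Steps 1--2 track the paper's argument (its Lemma~\ref{lemma:residual-size-in-T} is exactly your approximate KKT certificate, and your ``master inequality'' is the same algebra as the paper's identity~\eqref{eq:P-Omega-Delta-Square} combined with the optimality of $\bm{L}_{\mathrm{cvx}}$ and the nuclear-norm subgradient inequality), but your Step~3 has a genuine gap. Writing $\bm{a}=\mathcal{P}_\Omega(\bm{L}_{\mathrm{ncvx}}-\bm{M})$, $\bm{b}=\mathcal{P}_\Omega(\bm{\Delta})$, $s=\langle\bm{a},\bm{b}\rangle/\theta$ and $w=\|\mathcal{P}_{\bm{a}^\perp}(\bm{b})\|_{\mathrm{F}}$, the curvature deficit is exactly
\[
\|\bm{b}\|_{\mathrm{F}}^{2}-(v_\star-\theta)^{2}=\frac{2\theta\, w^{2}}{v_\star+\theta+s},
\]
so your claimed $(1-o(1))w^{2}$ lower bound requires $v_\star+s\le(1+o(1))\theta$, i.e.\ essentially $\|\mathcal{P}_\Omega(\bm{\Delta})\|_{\mathrm{F}}\ll\theta$. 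That is not available a priori: $\theta\asymp\sigma n\sqrt{p}$ can be arbitrarily small (the theorem only upper-bounds $\sigma$), while the only a priori control on $\bm{\Delta}$ from optimality is of order $\lambda\|\bm{L}^\star\|_*$, which dwarfs $\theta$ in the small-noise regime. So the step is circular---you are assuming a form of the conclusion. The paper avoids this entirely: in its ``$\alpha_1$'' bound it controls the deficit term directly via the optimality of $\bm{L}_{\mathrm{cvx}}$ and convexity, getting $(v_\star-\theta)^{2}\le\lambda^{2}\bigl(\sqrt{r}+o(1)\bigr)^{2}\|\mathcal{P}_T(\bm{\Delta})\|_{\mathrm{F}}^{2}$, which is then absorbed into the injectivity term $\tfrac14 pC_{\mathrm{inj}}^{2}\|\mathcal{P}_T(\bm{\Delta})\|_{\mathrm{F}}^{2}$ because $\lambda^{2}r\asymp r/n\ll p/\kappa$ under the sample-size assumption; no a priori closeness of $v_\star$ and $\theta$ is needed. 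Your ``cleaner alternative'' via $\tilde{\bm{L}}=\arg\min h$ is not a proof either: the sensitivity bound $\|\tilde{\bm{L}}-\bm{L}_{\mathrm{cvx}}\|_{\mathrm{F}}\lesssim\sqrt{r}\lambda|\theta-v_\star|/p$ and the claim $|\theta-v_\star|\lesssim\sqrt{p}\|\bm{\Delta}\|_{\mathrm{F}}$ (which needs a restricted upper RIP on the cone, not just $\|\mathcal{P}_\Omega(\bm{\Delta})\|_{\mathrm{F}}\le\|\bm{\Delta}\|_{\mathrm{F}}$) are themselves nontrivial lemmas you have not established.

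A secondary but real issue is your justification of $\|\bm{W}^\star\|<1/2$ in Step~1. The premise $\|\mathcal{P}_\Omega(\bm{L}_{\mathrm{ncvx}}-\bm{M})\|\lesssim\sigma\sqrt{np}$ is off by a factor of $\kappa$: the term $p(\bm{L}_{\mathrm{ncvx}}-\bm{L}^\star)$ alone has spectral norm up to $3C_{\mathrm{op}}\kappa\sigma\sqrt{np}$, so dividing by $\lambda\theta\asymp C_\lambda\sigma\sqrt{np}$ only yields $O(\kappa/C_\lambda)$, which is not $<1/2$ in the regime where $\kappa$ grows (and even the projection onto $T^\perp$, which kills $\bm{L}_{\mathrm{ncvx}}$ and leaves the second-order term $p\,\mathcal{P}_{T^\perp}(\bm{L}^\star)$, still retains a $\kappa$ factor under the stated noise condition). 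The paper's proof of this spectral bound is more delicate: it uses the balanced factorization $\bm{X}=\bm{U}\bm{\Sigma}^{1/2}\bm{Q}$, $\bm{Y}=\bm{V}\bm{\Sigma}^{1/2}\bm{Q}^{-\top}$ with $\|\bm{\Sigma}^{1/2}\bm{Q}\bm{Q}^\top\bm{\Sigma}^{-1/2}-\bm{I}_r\|\le 1/3$, keeps the large rank-$r$ part $\tfrac{p}{\theta}\bm{U}\bm{\Sigma}\bm{V}^\top+\lambda\bm{U}\bm{\Sigma}^{1/2}\bm{Q}\bm{Q}^\top\bm{\Sigma}^{-1/2}\bm{V}^\top$ intact, and applies a Weyl-type singular-value argument so that only the $\kappa$-free deviation terms $\mathcal{P}_\Omega(\bm{E})$ and $\mathcal{P}_\Omega(\bm{X}\bm{Y}^\top-\bm{L}^\star)-p(\bm{X}\bm{Y}^\top-\bm{L}^\star)$ need to be spectrally small relative to $\lambda\theta$. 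Without this (or an equivalent) mechanism, your dual certificate does not satisfy $\|\bm{W}^\star\|<1/2$ over the full parameter range of Theorem~\ref{thm:main}.
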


\noindent See Section \ref{sec:Connection-cvx-ncvx} for the proof
of this lemma. 

\medskip

We remark in passing that the polynomial factor $n^{-5}$ in Lemma~\ref{lemma:ncvx-cvx}
is arbitrarily chosen, and the exponent~$5$ can be replaced with
any large constant. The essence is that the difference between $\bm{L}_{\mathrm{ncvx}}$
and $\bm{L}_{\mathrm{cvx}}$ is orderwise much smaller compared to
the estimation error of $\bm{L}_{\mathrm{ncvx}}$ itself.
Such proximity between $\bm{L}_{\mathrm{ncvx}}$ and $\bm{L}_{\mathrm{cvx}}$
is verified empirically in Figure~\ref{fig:ncvx-cvx}.

\medskip

\begin{figure}
\centering{}\includegraphics[width=0.45\textwidth]{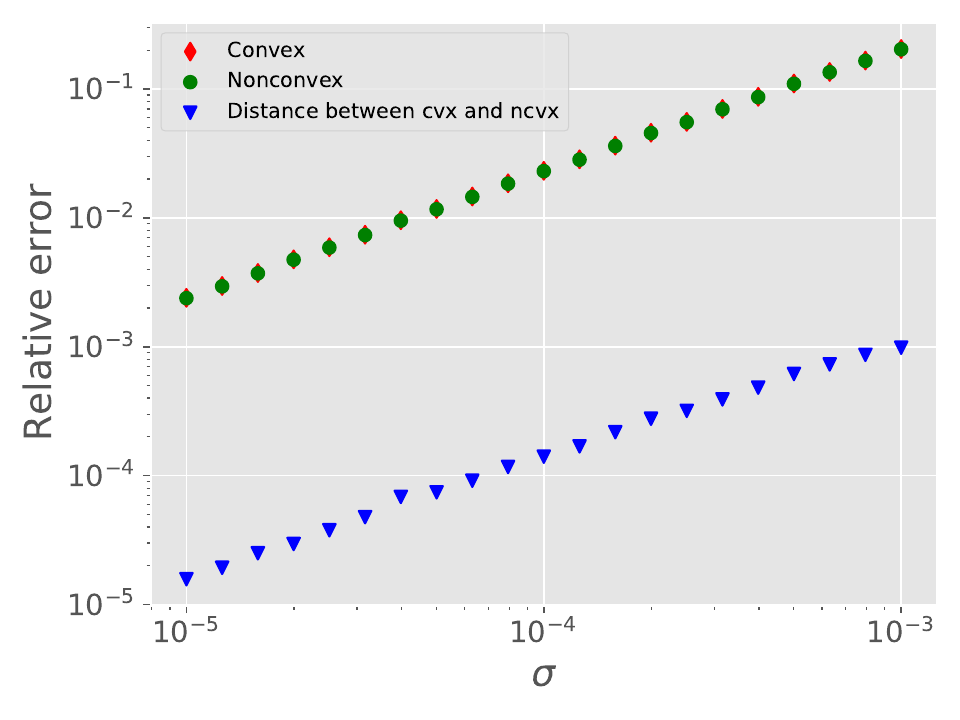}\caption{\label{fig:ncvx-cvx}Relative Frobenius estimation error of 
convex and nonconvex solutions and their distance. The parameters are
chosen as: $n=200,r=5,p=0.5$ while $\sigma$ varies from $10^{-5}$ to $10^{-3}$.}
\end{figure}

Now we are ready to combine the previous two steps and finish the
proof of Theorem~\ref{thm:main}.

\paragraph{Proof of Theorem~\ref{thm:main}.}

Combine Lemmas~\ref{lemma:ncvx-iterates-bound-XY-L}-\ref{lemma:ncvx-cvx}
with the triangle inequality to arrive at 
\begin{align*}
\left\Vert \bm{L}_{\mathrm{cvx}}-\bm{L}^{\star}\right\Vert _{\mathrm{F}} & \le\left\Vert \bm{L}_{\mathrm{ncvx}}-\bm{L}_{\mathrm{cvx}}\right\Vert _{\mathrm{F}}+\left\Vert \bm{L}_{\mathrm{ncvx}}-\bm{L}^{\star}\right\Vert _{\mathrm{F}}\\
 & \le\left[\frac{1}{n^{5}}\frac{\lambda\sigma}{\sigma_{\min}}+3\kappa C_{\mathrm{F}}\left(\frac{\sigma}{\sigma_{\min}}\sqrt{\frac{n}{p}}\right)\right]\|\bm{L}^{\star}\|_{\mathrm{F}}\\
 & \le4\kappa C_{\mathrm{F}}\left(\frac{\sigma}{\sigma_{\min}}\sqrt{\frac{n}{p}}\right)\|\bm{L}^{\star}\|_{\mathrm{F}},
\end{align*}
where the last relation uses the facts that $\lambda\asymp1/\sqrt{n}$
and that $p\gtrsim1/\sqrt{n}$. Redefine $4C_{\mathrm{F}}$ to be
$C_{\mathrm{F}}$ to complete the proof of the bound~\eqref{eq:L-bound-fro}.
The other two bounds on the operator norm and the $\ell_{\infty}$
norm follow from similar arguments. We omit here for brevity. 

\subsection{Proof of Lemma~\ref{lemma:ncvx-iterates-bound-XY-L} \label{subsec:Proof-of-Lemma-ncvx-XY-L}}

Since Algorithm~\ref{algo:non-cvx-gd} operates in the space of low-rank
factors, we start with establishing guarantees for the stacked low-rank
factor $\bm{F}_{t}\coloneqq\begin{bmatrix}\bm{X}_{t}\\
\bm{Y}_{t}
\end{bmatrix}\in\mathbb{R}^{2n\times r}$, and then translate the guarantees to the matrix space $\bm{L}_{t}=\bm{X}_{t}\bm{Y}_{t}^{\top}$.
Special care is needed as the decomposition $\bm{L}=\bm{X}\bm{Y}^{\top}$
is not unique in $(\bm{X},\bm{Y})$, and hence we need to account
for the rotational ambiguity in $(\bm{X},\bm{Y})$. To this end, for
each $t\geq0$, we define the optimal rotation matrix to be 
\begin{equation}
\bm{H}_{t}\coloneqq\mathrm{argmin}_{\bm{R}\in\mathcal{O}^{r\times r}}\quad\|\bm{X}_{t}\bm{R}-\bm{X}^{\star}\|_{\mathrm{F}}^{2}+\|\bm{Y}_{t}\bm{R}-\bm{Y}^{\star}\|_{\mathrm{F}}^{2}.\label{eq:def-Ht}
\end{equation}

\paragraph{Introducing leave-one-out sequences. }

In order to control the $\ell_{2,\infty}$ error of $\bm{F}_{t}$
(and hence $\ell_{\infty}$ error of $\bm{L}_{t}$), we construct
$2n$ leave-one-out auxiliary sequences $\{\bm{F}_{t}^{(l)}\}_{1\leq l\leq2n,t\geq0}$.
The hope is that $\{\bm{F}_{t}^{(l)}\}_{1\leq l\leq2n,t\geq0}$ serves
as a good approximation to the original sequence $\{\bm{F}_{t}\}_{t\geq0}$,
while at the same time is more amenable to statistical analysis. 

To formally construct such leave-one-out sequences, we first define
$2n$ auxiliary loss functions. For each $1\le l\le n$, define 
\[
f^{(l)}(\bm{X},\bm{Y},\theta)=\frac{1}{2}\left(\frac{\|\mathcal{P}_{\Omega_{-l,\cdot}}(\bm{L}-\bm{M})\|_{\mathrm{F}}^{2}+p\|\mathcal{P}_{l,\cdot}(\bm{L}-\bm{M})\|_{\mathrm{F}}^{2}}{\theta}+\theta\right)+\frac{\lambda}{2}\left(\|\bm{X}\|_{\mathrm{F}}^{2}+\|\bm{Y}\|_{\mathrm{F}}^{2}\right)
\]
where 
\[
\left[\mathcal{P}_{\Omega_{-l,\cdot}}(\bm{B})\right]_{ij}=\begin{cases}
B_{ij}, & \text{if }(i,j)\in\Omega\text{ and }i\neq l\\
0, & \text{otherwise}
\end{cases},\quad\text{and}\quad\left[\mathcal{P}_{l,\cdot}(\bm{B})\right]_{ij}=\begin{cases}
B_{ij}, & \text{if }i=l\\
0, & \text{otherwise}
\end{cases}.
\]
Similarly, for each $n+1\le l\le2n$, we define 
\[
f^{(l)}(\bm{X},\bm{Y},\theta)=\frac{1}{2}\left(\frac{\|\mathcal{P}_{\Omega_{\cdot,-(l-n)}}(\bm{L}-\bm{M})\|_{\mathrm{F}}^{2}+p\|\mathcal{P}_{l,\cdot}(\bm{L}-\bm{M})\|_{\mathrm{F}}^{2}}{\theta}+\theta\right)+\frac{\lambda}{2}\left(\|\bm{X}\|_{\mathrm{F}}^{2}+\|\bm{Y}\|_{\mathrm{F}}^{2}\right)
\]
where 
\[
\left[\mathcal{P}_{\Omega_{\cdot,-(l-n)}}(\bm{B})\right]_{ij}=\begin{cases}
B_{ij}, & \text{if }(i,j)\in\Omega\text{ and }j\neq l-n\\
0, & \text{otherwise}
\end{cases}\quad\text{and}\quad\left[\mathcal{P}_{l,\cdot}(\bm{B})\right]_{ij}=\begin{cases}
B_{ij}, & \text{if }j=l-n\\
0, & \text{otherwise}
\end{cases}.
\]
With these notations in place, Algorithm~\ref{algo:leave-one-out}
details the way we construct the leave-one-out sequences. 

\begin{algorithm}
\caption{Gradient descent generating the leave-one-out sequences \label{algo:leave-one-out}}

\textbf{Initialization:} $\bm{X}_{0}^{(l)}=\bm{X}^{\star},\bm{Y}_{0}^{(l)}=\bm{Y}^{\star},\theta_{0}^{(l)}=\|\mathcal{P}_{\Omega}(\bm{X}^{\star}\bm{Y}^{\star\top}-\bm{M})\|_{\mathrm{F}}$,
step size $\eta\asymp\sigma/(\sqrt{p}\kappa^{3}\sigma_{\max})$, and
total number of iterations $t_{0}=n^{18}$.

\textbf{Gradient updates: for $t=0,1,\cdots,t_{0}-1$ do }

\begin{subequations}

\begin{align}
\bm{X}_{t+1}^{(l)} & =\bm{X}_{t}^{(l)}-\eta\nabla_{\bm{X}}f^{(l)}(\bm{X}_{t}^{(l)},\bm{Y}_{t}^{(l)},\theta_{t})\label{eq:GD-leave-one-out-X-1};\\
\bm{Y}_{t+1}^{(l)} & =\bm{Y}_{t}^{(l)}-\eta\nabla_{\bm{Y}}f^{(l)}(\bm{X}_{t}^{(l)},\bm{Y}_{t}^{(l)},\theta_{t})\label{eq:GD-Y-1};\\
\theta_{t+1} & =\left\Vert \mathcal{P}_{\Omega}\left(\bm{X}_{t+1}\bm{Y}_{t+1}^{\top}-\bm{M}\right)\right\Vert _{\mathrm{F}}.\label{eq:GD-theta-1}
\end{align}

\end{subequations}
\end{algorithm}

Similar constructions have been deployed in the papers \cite{chen2020noisy}
and \cite{chen2021bridging}. However, it is worth pointing out that
the sequence $\{\theta_{t}\}$ is produced according to the original
sequence, instead of the leave-one-out sequence. This change is tailored
to the analysis of the \est~estimator as it aligns better with the
original loss function $f$, while allowing us to reuse several keys
results in the paper \cite{chen2020noisy}. 

\paragraph{Properties of the iterates.}

As planned, we aim to show that the leave-one-out iterates $\{\bm{F}_{t}^{(l)}\}_{1\leq l\leq2n,t\geq0}$
stay extremely close to the original iterates $\{\bm{F}_{t}\}_{t\geq0}$,
and that $\{\bm{F}_{t}\}_{t\geq0}$ is close to the groundtruth factor
$\bm{F}^{\star}$. Such properties are collected in the following
lemma. 

\begin{lemma}\label{lemma:induction} With probability at least $1-O(n^{-3})$,
the following statements hold for all iterations $0\le t\le t_{0}$:
\begin{subequations}\label{eq:ncvx-iterates-full}

\begin{align}
\|\bm{F}_{t}\bm{H}_{t}-\bm{F}^{\star}\|_{\mathrm{F}} & \le C_{\mathrm{F}}\frac{\sigma}{\sigma_{\min}}\sqrt{\frac{n}{p}}\|\bm{X}^{\star}\|_{\mathrm{F}},\label{eq:ncvx-itr-bound-fro}\\
\|\bm{F}_{t}\bm{H}_{t}-\bm{F}^{\star}\| & \le C_{\mathrm{op}}\frac{\sigma}{\sigma_{\min}}\sqrt{\frac{n}{p}}\|\bm{X}^{\star}\|,\label{eq:ncvx-itr-bound-spectral}\\
\max_{1\le l\le2n}\|\bm{F}_{t}\bm{H}_{t}-\bm{F}_{t}^{(l)}\bm{R}_{t}^{(l)}\|_{\mathrm{F}} & \le C_{\mathrm{3}}\frac{\sigma}{\sigma_{\min}}\sqrt{\frac{n\log n}{p}}\|\bm{F}^{\star}\|_{2,\infty},\label{eq:ncvx-itr-bound-leave1out}\\
\max_{1\le l\le2n}\|(\bm{F}_{t}^{(l)}\bm{H}_{t}^{(l)}-\bm{F}^{\star})_{l,\cdot}\|_{\mathrm{F}} & \le C_{\mathrm{4}}\kappa\frac{\sigma}{\sigma_{\min}}\sqrt{\frac{n\log n}{p}}\|\bm{F}^{\star}\|_{2,\infty},\label{eq:ncvx-itr-bound-leave1out-to-ground-truth}\\
\|\bm{F}_{t}\bm{H}_{t}-\bm{F}^{\star}\|_{2,\infty} & \le C_{\mathrm{\infty}}\kappa\frac{\sigma}{\sigma_{\min}}\sqrt{\frac{n\log n}{p}}\|\bm{F}^{\star}\|_{2,\infty},\label{eq:ncvx-itr-bound-2infty}
\end{align}

\end{subequations}

for some positive constants $C_{\mathrm{F}},C_{\mathrm{op}},C_{3},C_{4},C_{\infty}$.
Here $\bm{H}_{t}^{(l)}$ and $\bm{R}_{t}^{(l)}$ are defined as 

\begin{align*}
\bm{H}_{t}^{(l)} & \coloneqq\mathrm{argmin}_{\bm{R}\in\mathcal{O}^{r\times r}}\|\bm{F}_{t}^{(l)}\bm{R}-\bm{F}^{\star}\|_{\mathrm{F}}\\
\bm{R}_{t}^{(l)} & \coloneqq\mathrm{argmin}_{\bm{R}\in\mathcal{O}^{r\times r}}\|\bm{F}_{t}^{(l)}\bm{R}-\bm{F}_{t}\bm{H}_{t}\|_{\mathrm{F}}.
\end{align*}
Furthermore the output $(\bm{X}_{t^{\star}}, \bm{Y}_{t^{\star}})$ has small gradient: 
\begin{equation}
\|\nabla_{\bm{X},\bm{Y}}f(\bm{X}_{t^{\star}},\bm{Y}_{t^{\star}},\theta_{t^{\star}})\|_{\mathrm{F}}\le C_{\mathrm{grad}}\frac{1}{n^{8}}\sqrt{\frac{\sigma_{\max}}{p}}. \label{eq:small-grad}
\end{equation}

\end{lemma}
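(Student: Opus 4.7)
The plan is to prove the five iterate bounds (\ref{eq:ncvx-itr-bound-fro})--(\ref{eq:ncvx-itr-bound-2infty}) by a joint induction on $t$, following the leave-one-out template of \cite{chen2020noisy}, while isolating and controlling the extra $1/\theta_t$ factor that distinguishes the square-root loss from the squared loss. The base case $t=0$ is trivial because $\bm{X}_0 = \bm{X}^\star$, $\bm{Y}_0 = \bm{Y}^\star$ and each leave-one-out copy is initialized identically, so the left-hand side of every bound vanishes. The small-gradient bound (\ref{eq:small-grad}) is then derived as a separate post-processing step from the descent property of $f$.

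The pivotal preparation is to track $\theta_t$. Since $\theta_t = \|\mathcal{P}_\Omega(\bm{X}_t \bm{Y}_t^\top - \bm{M})\|_{\mathrm{F}}$ and $\mathcal{P}_\Omega(\bm{M}) = \mathcal{P}_\Omega(\bm{L}^\star) + \mathcal{P}_\Omega(\bm{E})$, the triangle inequality, the inductive Frobenius bound on $\bm{X}_t\bm{Y}_t^\top - \bm{L}^\star$, and the standard concentration $\|\mathcal{P}_\Omega(\bm{E})\|_{\mathrm{F}} = (1 + o(1))\,\sigma n\sqrt{p}$ together sandwich $\theta_t$ between two constant multiples of $\sigma n\sqrt{p}$. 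Once $\theta_t$ is essentially frozen at this scale, each gradient step can be read as a smooth gradient step, with effective regularization $\lambda\theta_t$ and effective step size $\eta/\theta_t$, on the rescaled quadratic surrogate $\tfrac{1}{2\theta_t}\|\mathcal{P}_\Omega(\bm{X}\bm{Y}^\top - \bm{M})\|_{\mathrm{F}}^2 + \tfrac{\lambda}{2}(\|\bm{X}\|_{\mathrm{F}}^2 + \|\bm{Y}\|_{\mathrm{F}}^2)$. The nonconvex machinery of \cite{chen2020noisy} then ports over almost verbatim; the only new contributions come from the variation of $1/\theta_t$ along the trajectory and across leave-one-out copies, and these are absorbed into a higher-order remainder.

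The inductive step for each bound proceeds by expanding $\bm{F}_{t+1}\bm{H}_t - \bm{F}^\star$ (and its leave-one-out analog), decomposing it into a contraction along the ``signal'' direction and a noise-driven term proportional to $\tfrac{1}{\theta_t}\mathcal{P}_\Omega(\bm{E})\bm{F}^\star$ plus higher-order error. The contraction factor is of the form $1 - \eta\sigma_{\min} + O(\eta\lambda\theta_t)$, so with our choice of $\eta$ and $\lambda$ the recursion closes and yields the stationary bound (\ref{eq:ncvx-itr-bound-fro}), and analogously for the spectral bound (\ref{eq:ncvx-itr-bound-spectral}). Crucially, because the leave-one-out sequences are updated with the same $\theta_t$ as the master sequence by construction, the $\theta$-discrepancy cancels in $\bm{F}_t\bm{H}_t - \bm{F}_t^{(l)}\bm{R}_t^{(l)}$, and (\ref{eq:ncvx-itr-bound-leave1out})--(\ref{eq:ncvx-itr-bound-leave1out-to-ground-truth}) reduce to tracking the effect of withholding one row or column of $\Omega$---the same calculation as in \cite{chen2020noisy}, modulo the $1/\theta_t$ rescaling. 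The $\ell_{2,\infty}$ bound (\ref{eq:ncvx-itr-bound-2infty}) is then obtained by triangle inequality from (\ref{eq:ncvx-itr-bound-leave1out}) and (\ref{eq:ncvx-itr-bound-leave1out-to-ground-truth}), with the rotational mismatch between $\bm{H}_t$ and $\bm{H}_t^{(l)}\bm{R}_t^{(l)}$ handled by a standard Wedin-type perturbation argument.

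Finally, for (\ref{eq:small-grad}) one shows a standard descent inequality $f(\bm{X}_{t+1},\bm{Y}_{t+1},\theta_{t+1}) \le f(\bm{X}_t,\bm{Y}_t,\theta_t) - c\eta\|\nabla_{\bm{X},\bm{Y}}f(\bm{X}_t,\bm{Y}_t,\theta_t)\|_{\mathrm{F}}^2$, noting that the $\theta$-update only further decreases $f$; the local smoothness constant of $f$ in $(\bm{X},\bm{Y})$ at fixed $\theta_t \asymp \sigma n\sqrt{p}$ is controlled because all previous iterates are near $\bm{F}^\star$ by the inductive bounds. Telescoping across $t_0 = n^{18}$ iterations and using the crude upper bound $f(\bm{X}_0,\bm{Y}_0,\theta_0) \lesssim \sigma n\sqrt{p} + \lambda\|\bm{F}^\star\|_{\mathrm{F}}^2$ yields $\min_t \|\nabla_{\bm{X},\bm{Y}}f\|_{\mathrm{F}}^2 \lesssim 1/(\eta t_0)$, and plugging in $\eta \asymp \sigma/(\sqrt{p}\kappa^3\sigma_{\max})$ comfortably gives the claimed $n^{-8}\sqrt{\sigma_{\max}/p}$ rate. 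The main obstacle throughout is the $\theta_t$ bookkeeping: each inductive bound must be sharp enough that the resulting control on $\theta_t$ is strong enough to feed back into the next step without losing constants, which forces a careful ordering of the inductive arguments but introduces no essentially new technical ingredient beyond those in \cite{chen2020noisy}.
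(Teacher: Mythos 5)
Your induction scheme is essentially the paper's: you add the sandwich $\tfrac12 np^{1/2}\sigma\le\theta_t\le 2np^{1/2}\sigma$ as an extra induction hypothesis, observe that with $\theta_t$ frozen each update is exactly a step of the gradient method of \cite{chen2020noisy} with effective parameters $\tilde\lambda_t=\lambda\theta_t\asymp\sigma\sqrt{np}$ and $\tilde\eta_t=\eta/\theta_t\asymp 1/(np\kappa^3\sigma_{\max})$, port their leave-one-out lemmas (the paper invokes Lemmas 10--15 there rather than re-deriving the contraction), and exploit that the leave-one-out copies share the same $\theta_t$ as the master sequence. Your control of $\theta_{t+1}$ via the Frobenius (rather than $\ell_\infty$) bound on $\bm{X}_t\bm{Y}_t^\top-\bm{L}^\star$ also closes under the stated sample-size condition, so that part is fine.

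The gap is in your derivation of the small-gradient bound~\eqref{eq:small-grad}. You telescope the descent inequality and then bound the total decrease by the crude estimate $f(\bm{X}_0,\bm{Y}_0,\theta_0)\lesssim \sigma n\sqrt{p}+\lambda\|\bm{F}^\star\|_{\mathrm{F}}^2$, discarding $f(\bm{X}_{t_0},\bm{Y}_{t_0},\theta_{t_0})\ge 0$. The regularization part $\lambda\|\bm{F}^\star\|_{\mathrm{F}}^2\asymp r\sigma_{\max}/\sqrt{n}$ does \emph{not} scale with $\sigma$, while $\eta t_0\asymp \sigma n^{18}/(\sqrt{p}\kappa^3\sigma_{\max})$ does; hence your bound gives
\begin{equation*}
\min_{0\le t<t_0}\|\nabla_{\bm{X},\bm{Y}}f(\bm{X}_t,\bm{Y}_t,\theta_t)\|_{\mathrm{F}}\;\lesssim\;\Bigl[\frac{\sigma n\sqrt{p}+r\sigma_{\max}/\sqrt{n}}{\eta t_0}\Bigr]^{1/2},
\end{equation*}
whose second contribution behaves like $\sigma^{-1/2}$ and blows up as $\sigma\to 0$. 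Since the noise assumption only upper-bounds $\sigma$ (the near-noiseless regime is precisely one the paper cares about), the claimed rate $n^{-8}\sqrt{\sigma_{\max}/p}$ does not follow from your argument. The missing ingredient is the near-cancellation of the regularizer between $t=0$ and $t=t_0$: by the iterate bounds already established, $\bigl|\|\bm{X}^\star\|_{\mathrm{F}}^2-\|\bm{X}_{t_0}\bm{H}_{t_0}\|_{\mathrm{F}}^2\bigr|\lesssim \sigma r\kappa\sqrt{n/p}$ (and likewise for $\bm{Y}$), so the telescoped decrease is $f(\bm{X}_0,\bm{Y}_0,\theta_0)-f(\bm{X}_{t_0},\bm{Y}_{t_0},\theta_{t_0})\lesssim \|\mathcal{P}_\Omega(\bm{E})\|_{\mathrm{F}}+\lambda\sigma r\kappa\sqrt{n/p}\lesssim n\sigma/\sqrt{p}$, i.e.\ proportional to $\sigma$; this $\sigma$ cancels the one in $\eta$, and only then does $[\,(f_0-f_{t_0})/(\eta t_0)\,]^{1/2}\lesssim n^{-8}\sqrt{\sigma_{\max}/p}$ come out uniformly in $\sigma$. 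The rest of your outline (descent inequality at fixed $\theta_t$ plus optimality of the $\theta$-update) matches the paper's treatment, which obtains the descent step by rescaling to the objective of Lemma 16 in \cite{chen2020noisy}.
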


\noindent See Section~\ref{sec:proof-induction} for the proof of this lemma. 
\medskip

% \begin{remark}

% This lemma shows that the gradient descent iterates are close to the
% ground truth in various metrics and they contain an approximate stationary
% point that has very small gradient size. 

% \end{remark}

Now we are ready to prove Lemma~\ref{lemma:ncvx-iterates-bound-XY-L}
based on the results presented in Lemma~\ref{lemma:induction}. 

\paragraph{Proof of Lemma~\ref{lemma:ncvx-iterates-bound-XY-L}.}

By the triangle inequality, one has
\begin{align*}
\|\bm{X}_{t^{\star}}\bm{Y}_{t^{\star}}^{\top}-\bm{L}^{\star}\| & \le\|\bm{X}_{t^{\star}}\bm{Y}_{t^{\star}}^{\top}-\bm{X}_{t^{\star}}\bm{Y}^{\star\top}\|+\|\bm{X}_{t^{\star}}\bm{Y}^{\star\top}-\bm{L}^{\star}\|\\
 & \le\|\bm{Y}_{t^{\star}}-\bm{Y}^{\star}\|\|\bm{X}_{t^{\star}}\|+\|\bm{X}_{t^{\star}}-\bm{X}^{\star}\|\|\bm{Y}^{\star}\|.
\end{align*}
Use relation~\eqref{eq:ncvx-itr-bound-spectral} to obtain 
\begin{align*}
\|\bm{X}_{t^{\star}}\bm{Y}_{t^{\star}}^{\top}-\bm{L}^{\star}\| & \le3C_{\mathrm{op}}\frac{\sigma}{\sigma_{\min}}\sqrt{\frac{n}{p}}\|\bm{X}^{\star}\|\|\bm{X}^{\star}\|=3C_{\mathrm{op}}\frac{\sigma}{\sigma_{\min}}\sqrt{\frac{n}{p}}\|\bm{L}^{\star}\|.
\end{align*}
The first inequality uses $\|\bm{X}_{t^{\star}}\|\le2\|\bm{X}^{\star}\|$,
which is a direct consequence of \eqref{eq:ncvx-itr-bound-spectral}
and the last line uses $\|\bm{L}^{\star}\|=\sigma_{\max}=\|\bm{X}^{\star}\|^{2}$.
Similarly we have 
\[
\|\bm{X}_{t^{\star}}\bm{Y}_{t^{\star}}^{\top}-\bm{L}^{\star}\|_{\mathrm{F}}\le3C_{\mathrm{\mathrm{F}}}\frac{\sigma}{\sigma_{\min}}\sqrt{\frac{n}{p}}\|\bm{X}^{\star}\|_{\mathrm{F}}\|\bm{X}^{\star}\|\overset{(\text{i})}{\le}3\kappa C_{\mathrm{op}}\frac{\sigma}{\sigma_{\min}}\sqrt{\frac{n}{p}}\|\bm{L}^{\star}\|_{\mathrm{F}}
\]
and
\[
\|\bm{X}_{t^{\star}}\bm{Y}_{t^{\star}}^{\top}-\bm{L}^{\star}\|_{\infty}\le3C_{\mathrm{\infty}}\frac{\sigma}{\sigma_{\min}}\sqrt{\frac{n\log n}{p}}\|\bm{F}^{\star}\|_{2,\infty}\|\bm{F}^{\star}\|_{2,\infty}\overset{(\text{ii})}{\le}3\sqrt{\kappa^{3}\mu r}C_{\infty}\frac{\sigma}{\sigma_{\min}}\sqrt{\frac{n\log n}{p}}\|\bm{L}^{\star}\|_{\infty}.
\]
Here step (i) uses the fact$\|\bm{X}^{\star}\|_{\mathrm{F}}\|\bm{X}^{\star}\|\le\kappa\|\bm{L}^{\star}\|_{\mathrm{F}}$, 
whereas in step (ii) we use 
\[\|\bm{F}^{\star}\|_{2,\infty}\|\bm{F}^{\star}\|_{2,\infty}\le\sqrt{\kappa^{3}\mu r}\|\bm{L}^{\star}\|_{\infty}.\]

\subsection{Proof of Lemma~\ref{lemma:ncvx-cvx} \label{sec:Connection-cvx-ncvx}}

Before embarking on the main proof, we state a few useful properties
of the noise matrix $\bm{E}$ and the nonconvex solution $\bm{L}_{\mathrm{ncvx}}$.
These properties allow us to establish the proximity between the approximate
stationary point $\bm{L}_{\mathrm{ncvx}}$ and the convex solution
$\bm{L}_{\mathrm{cvx}}$. 

The first property is concerned with the size of the regularization
parameter, which appeared as Lemma~3 in the paper~\cite{chen2020noisy}.

\begin{lemma}\label{lemma:E-bound}Suppose that $n^{2}p\ge Cn\log^{2}n$
for some sufficiently large constant $C>0$. Take $\lambda=C_{\lambda}/\sqrt{n}$
for some absolute constant $C_{\lambda}$. Then with probability at least $1-O(n^{-10})$,
one has 
\begin{equation}
\|\mathcal{P}_{\Omega}(\bm{E})\|\le\frac{\lambda}{16}np^{1/2}\sigma.\label{eq:noise-bound}
\end{equation}
\end{lemma}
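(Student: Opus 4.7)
Observe that $\tfrac{\lambda}{16}n p^{1/2}\sigma = \tfrac{C_\lambda}{16}\sigma\sqrt{np}$ when $\lambda = C_\lambda/\sqrt{n}$, so the task reduces to showing $\|\mathcal{P}_\Omega(\bm{E})\| \lesssim \sigma\sqrt{np}$ with probability at least $1-O(n^{-10})$; the constant $C_\lambda$ is then chosen large enough to absorb the implicit constant.

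First, I would decompose
\[
\mathcal{P}_\Omega(\bm{E}) \;=\; \sum_{1\le i,j\le n} \delta_{ij}\, E_{ij}\, \bm{e}_i \bm{e}_j^\top
\]
as a sum of independent zero-mean rank-one random matrices, with $\delta_{ij}\coloneqq \mathbf{1}\{(i,j)\in\Omega\} \sim \mathrm{Bernoulli}(p)$ independent of $E_{ij}$. Since $\E[E_{ij}^2] \le C\sigma^2$, the row and column second-moment sums $\max_i\sum_j \E[(\delta_{ij}E_{ij})^2]$ and $\max_j\sum_i \E[(\delta_{ij}E_{ij})^2]$ are both at most $np\sigma^2$ up to constants, giving a variance proxy $\sigma_* \asymp \sigma\sqrt{np}$.

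Next, since $E_{ij}$ is sub-Gaussian rather than bounded, I would truncate at level $B\asymp \sigma\sqrt{\log n}$: the tail bound $\Pr(|E_{ij}|>B)\le 2\exp(-cB^2/\sigma^2)$ combined with a union bound over the $n^2$ entries gives $\max_{i,j}|E_{ij}|\le B$ with probability $\ge 1-O(n^{-10})$. On this event $\mathcal{P}_\Omega(\bm{E})$ coincides with the analogous sum built from truncated (and then re-centered) entries, and the bias introduced by truncation is of polynomially small order and hence negligible.

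Finally, I would apply a sharp spectral-norm concentration inequality to the truncated sum. Naive matrix Bernstein would only yield $\|\mathcal{P}_\Omega(\bm{E})\| \lesssim \sigma\sqrt{np\log n}$, which carries a spurious $\sqrt{\log n}$ factor --- this log-factor gap is the main obstacle. To close it I would invoke the Bandeira--van Handel bound, which yields $\E\|\mathcal{P}_\Omega(\bm{E})\| \lesssim \sigma_* + \sigma_{\max}\sqrt{\log n} \asymp \sigma\sqrt{np} + \sigma\log n$; under the sample-size hypothesis $n^2 p \ge Cn\log^2 n$ (equivalently $np \ge C\log^2 n$), the leading term $\sigma\sqrt{np}$ dominates. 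A Talagrand-type concentration argument for suprema of linear forms (with deviation scale $\sigma_{\max}$) then upgrades the expectation bound to the desired high-probability statement. This is exactly the content of Lemma~3 in \cite{chen2020noisy}, which can therefore be cited verbatim.
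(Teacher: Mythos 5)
Your proposal is correct and ultimately lands on the same argument as the paper: the paper proves this lemma simply by citing Lemma~3 of \cite{chen2020noisy}, and your write-up (after a sound sketch of truncation plus a Bandeira--van Handel/Talagrand bound, which is indeed how the cited lemma is established) ends by invoking that very lemma. No gap to report.
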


The next property is on the injectivity of $\mathcal{P}_{\Omega}$
in the tangent space $T$ at $\bm{L}_{\mathrm{ncvx}}$. More precisely,
letting $\bm{U\Sigma V}^{\top}$ be the SVD of $\bm{L}_{\mathrm{ncvx}}$,
we define the tangent space $T$ at $\bm{L}_{\mathrm{ncvx}}$ as 

\[
T=\left\{ \boldsymbol{U}\boldsymbol{A}^{\top}+\boldsymbol{B}\boldsymbol{V}^{\top}\mid\boldsymbol{A},\ensuremath{\bm{B}}\ensuremath{\in\mathbb{R}^{n\times r}}\right\} .
\]

\begin{lemma}\label{lemma:injectivity} Instate the assumptions of
Theorem~\ref{thm:main}. With probability exceeding $1-O(n^{-3})$, 
for all $\bm{H}\in T$
\begin{equation}
p^{-1/2}\|\mathcal{P}_{\Omega}(\bm{H})\|_{\mathrm{F}}\ge C_{\mathrm{inj}}\|\bm{H}\|_{\mathrm{F}},\qquad \text{where}\quad C_{\mathrm{inj}}=(32\kappa)^{-1/2}.\label{eq:injectivity}
\end{equation}

\end{lemma}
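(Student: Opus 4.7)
The plan is to reduce the claim to the classical tangent-space restricted-isometry property (RIP) for matrix completion, after first promoting Lemma~\ref{lemma:induction} into an incoherence statement for $\bm{L}_{\mathrm{ncvx}}$ itself. This way the lemma becomes a direct consequence of a well-developed body of results on the operator $p^{-1}\mathcal{P}_T\mathcal{P}_\Omega\mathcal{P}_T$.

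The first step is to show that $\bm{L}_{\mathrm{ncvx}}$ inherits incoherence from $\bm{L}^\star$, up to a multiplicative $\kappa$. The $\ell_{2,\infty}$ bound \eqref{eq:ncvx-itr-bound-2infty} combined with the noise assumption yields $\|\bm{F}_{t^\star}\bm{H}_{t^\star} - \bm{F}^\star\|_{2,\infty} \ll \|\bm{F}^\star\|_{2,\infty}$, so $\|\bm{X}_{t^\star}\|_{2,\infty} \le 2\|\bm{X}^\star\|_{2,\infty} \le 2\sqrt{\mu r \sigma_{\max}/n}$ and similarly for $\bm{Y}_{t^\star}$. Likewise, the spectral bound \eqref{eq:ncvx-itr-bound-spectral} produces $\sigma_r(\bm{X}_{t^\star}) \ge \sqrt{\sigma_{\min}}/2$. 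Writing the SVD $\bm{L}_{\mathrm{ncvx}} = \bm{U}\bm{\Sigma}\bm{V}^\top$ that defines $T$, the left singular vectors lie in $\mathrm{col}(\bm{X}_{t^\star})$, so $\bm{U} = \bm{X}_{t^\star}\bm{A}$ for some $\bm{A}$ with $\|\bm{A}\| \le 2/\sqrt{\sigma_{\min}}$; hence $\|\bm{U}\|_{2,\infty} \lesssim \sqrt{\kappa\mu r/n}$, and the same bound for $\bm{V}$.

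The second step appeals to the standard tangent-space RIP for incoherent rank-$r$ matrices: for any $\tilde\mu$-incoherent rank-$r$ matrix, provided $n^2 p \gtrsim \tilde{\mu} r n \log n$, with probability exceeding $1 - O(n^{-10})$ one has $\|p^{-1}\mathcal{P}_T \mathcal{P}_\Omega \mathcal{P}_T - \mathcal{P}_T\| \le 1/2$ (see e.g.~\cite{candes2010matrix,chen2020noisy}). With $\tilde\mu = O(\kappa\mu)$ from step 1, the condition $n^2 p \gtrsim \kappa^4\mu^2 r^2 n \log^3 n$ of Theorem~\ref{thm:main} easily dominates $\tilde{\mu} r n \log n$. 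For any $\bm{H} \in T$ this yields
\begin{equation*}
p^{-1}\|\mathcal{P}_\Omega(\bm{H})\|_{\mathrm{F}}^2 \;=\; \langle \bm{H},\, p^{-1}\mathcal{P}_T\mathcal{P}_\Omega\mathcal{P}_T \bm{H}\rangle \;\ge\; \tfrac12 \|\bm{H}\|_{\mathrm{F}}^2,
\end{equation*}
i.e.~$p^{-1/2}\|\mathcal{P}_\Omega(\bm{H})\|_{\mathrm{F}} \ge \|\bm{H}\|_{\mathrm{F}}/\sqrt{2}$, comfortably exceeding $(32\kappa)^{-1/2}\|\bm{H}\|_{\mathrm{F}}$ since $\kappa \ge 1$.

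The main subtlety lies in the first step, where careful bookkeeping with the rotation $\bm{H}_{t^\star}$ and with the unbalanced factor $\bm{X}_{t^\star}$ (as opposed to the orthonormal $\bm{U}$) is needed to translate the factor-level $\ell_{2,\infty}$ guarantee into an incoherence bound on the singular vectors of $\bm{L}_{\mathrm{ncvx}}$; the loss of a $\sqrt{\kappa}$ factor there is precisely what accounts for the $\kappa$ appearing in $C_{\mathrm{inj}}$. The RIP invocation in step 2 is standard but---since $T$ is random because $\bm{L}_{\mathrm{ncvx}}$ is random---must either be stated uniformly over all tangent spaces of $O(\kappa\mu)$-incoherent rank-$r$ matrices close to $\bm{L}^\star$, or be supplemented with a net argument over the iterate space, mirroring the device already in \cite{chen2020noisy}.
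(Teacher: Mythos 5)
There is a genuine gap, and it sits exactly where you relegate the difficulty to a closing aside. Your Step 2 invokes the standard bound $\|p^{-1}\mathcal{P}_T\mathcal{P}_\Omega\mathcal{P}_T-\mathcal{P}_T\|\le 1/2$, but that result is proved for a \emph{fixed} tangent space, independent of the sampling set $\Omega$ (it is a matrix-concentration statement whose proof requires that independence). Here $T$ is the tangent space at $\bm{L}_{\mathrm{ncvx}}$, which is the output of gradient descent run on $\mathcal{P}_\Omega(\bm{M})$, hence statistically dependent on $\Omega$; the fixed-$T$ RIP simply does not apply. Making the statement uniform over all tangent spaces of matrices whose factors are $\ell_{2,\infty}$-close to the ground truth is not a routine supplement --- it is precisely the content of Lemma~4 in \cite{chen2020noisy}, and that uniformity is where the constant degrades: the argument passes through the non-orthonormal factor representation $\bm{H}=\bm{X}\bm{A}^{\top}+\bm{B}\bm{Y}^{\top}$ and converting between $\|\bm{H}\|_{\mathrm{F}}$ and the factor norms costs $\sigma_{\max}/\sigma_{\min}$, which is why the paper's proof (verify hypothesis \eqref{eq:ncvx-itr-bound-2infty}, then cite that lemma) lands on $C_{\mathrm{inj}}=(32\kappa)^{-1/2}$ rather than $1/\sqrt{2}$. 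A ``net over the iterate space'' cannot rescue the fixed-$T$ route either, since the iterates themselves depend on $\Omega$; you would again have to union over a deterministic neighborhood of $\bm{F}^{\star}$, i.e.\ reprove the uniform lemma.

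A secondary inconsistency: you assert that the $\sqrt{\kappa}$ loss in the incoherence of $\bm{U},\bm{V}$ in Step 1 ``accounts for'' the $\kappa$ in $C_{\mathrm{inj}}$, but in your own argument the inflated incoherence $\tilde{\mu}=O(\kappa\mu)$ only enters the sample-size requirement and leaves the RIP constant at $1/2$; your proof as written would deliver $p^{-1/2}\|\mathcal{P}_\Omega(\bm{H})\|_{\mathrm{F}}\ge\|\bm{H}\|_{\mathrm{F}}/\sqrt{2}$ uniformly in $\kappa$, a stronger claim than the lemma and one that is not supported once the dependence issue above is handled correctly. Step 1 itself (transferring the factor-level $\ell_{2,\infty}$ and spectral bounds from Lemma~\ref{lemma:induction} to incoherence of the singular vectors of $\bm{L}_{\mathrm{ncvx}}$) is fine, but it is also unnecessary for the paper's route, which applies the uniform injectivity lemma directly to the factors $(\bm{X}_{\mathrm{ncvx}},\bm{Y}_{\mathrm{ncvx}})$ via \eqref{eq:ncvx-itr-bound-2infty}.
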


\begin{proof}
This is an easy consequence of Lemma 4 in the paper \cite{chen2020noisy} and the relation~\eqref{eq:ncvx-itr-bound-2infty}.
\end{proof} 

Last but not least, the lemma collects several interesting properties of the nonconvex solution $\bm{L}_{\mathrm{ncvx}}$, as well as its low-rank factors $\bm{X}_{\mathrm{ncvx}}, \bm{Y}_{\mathrm{ncvx}}$.

\begin{lemma}\label{lemma:error-size}

The approximate stationary point $\bm{L}_{\mathrm{ncvx}}$ satisfies
\begin{subequations}
\begin{align}
\sqrt{\sigma_{\min}/2}\leq\sigma_{\min}(\bm{X}_{\mathrm{ncvx}})\leq\sigma_{\max}(\bm{X}_{\mathrm{ncvx}}) & \leq\sqrt{2\sigma_{\max}};\\
\sqrt{\sigma_{\min}/2}\leq\sigma_{\min}(\bm{Y}_{\mathrm{ncvx}})\leq\sigma_{\max}(\bm{Y}_{\mathrm{ncvx}}) & \leq\sqrt{2\sigma_{\max}};\\
\frac{1}{2}np^{1/2}\sigma\le\|\mathcal{P}_{\Omega}(\bm{L}_{\mathrm{ncvx}}-\bm{M})\|_{\mathrm{F}} & \le2np^{1/2}\sigma;\label{eq:theta-bound-Lncvx}\\
\|\mathcal{P}_{\Omega}(\bm{X}\bm{Y}^{\top}-\bm{L}^{\star})-p(\bm{X}\bm{Y}^{\top}-\bm{L}^{\star})\| & \le\frac{\lambda}{16}np^{1/2}\sigma.\label{eq:regularization-req-P}
\end{align}

\end{subequations}\end{lemma}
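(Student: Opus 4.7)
The four statements in Lemma~\ref{lemma:error-size} fall into three categories that can be attacked with distinct tools, all of them fed by the fine-grained control on $(\bm{X}_{t^\star},\bm{Y}_{t^\star})$ already supplied by Lemma~\ref{lemma:induction}.

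\textbf{Singular values of $\bm{X}_{\mathrm{ncvx}}$ and $\bm{Y}_{\mathrm{ncvx}}$.} Since orthogonal rotations preserve singular values, $\sigma_j(\bm{X}_{\mathrm{ncvx}}) = \sigma_j(\bm{X}_{t^\star}\bm{H}_{t^\star})$ for every $j$, and Weyl's inequality gives
\[
|\sigma_j(\bm{X}_{\mathrm{ncvx}}) - \sigma_j(\bm{X}^\star)| \le \|\bm{X}_{t^\star}\bm{H}_{t^\star} - \bm{X}^\star\| \le \|\bm{F}_{t^\star}\bm{H}_{t^\star} - \bm{F}^\star\|.
\]
The spectral bound~\eqref{eq:ncvx-itr-bound-spectral} combined with the noise assumption $\tfrac{\sigma}{\sigma_{\min}}\sqrt{n/p}\ll 1/\sqrt{\kappa}$ forces the RHS to be much smaller than $\sqrt{\sigma_{\min}}=\sigma_{\min}(\bm{X}^\star)$, delivering the two-sided inequalities. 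The argument for $\bm{Y}_{\mathrm{ncvx}}$ is verbatim.

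\textbf{Frobenius bound~\eqref{eq:theta-bound-Lncvx}.} Since $\bm{M}=\bm{L}^\star+\bm{E}$ on $\Omega$, split
\[
\mathcal{P}_\Omega(\bm{L}_{\mathrm{ncvx}}-\bm{M}) = \mathcal{P}_\Omega(\bm{L}_{\mathrm{ncvx}}-\bm{L}^\star) - \mathcal{P}_\Omega(\bm{E}).
\]
A Bernstein/chi-squared concentration on the i.i.d.~sub-Gaussian entries of $\bm{E}$ gives $\|\mathcal{P}_\Omega(\bm{E})\|_{\mathrm{F}} = (1\pm o(1))\sqrt{p}\,n\sigma$ with high probability. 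For the residual term, Lemma~\ref{lemma:injectivity} (whose hypotheses are validated by the incoherence of $\bm{L}_{\mathrm{ncvx}}-\bm{L}^\star$ provided by Lemma~\ref{lemma:induction}) yields $\|\mathcal{P}_\Omega(\bm{L}_{\mathrm{ncvx}}-\bm{L}^\star)\|_{\mathrm{F}}\lesssim\sqrt{p}\,\|\bm{L}_{\mathrm{ncvx}}-\bm{L}^\star\|_{\mathrm{F}}$, which by~\eqref{eq:ncvx-iterates-bound-XY-L-fro} is $\lesssim \kappa^{2}\sigma\sqrt{nr}$. Under the sample-size condition this is $o(\sqrt{p}\,n\sigma)$, and the triangle inequality closes the two-sided bound.

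\textbf{De-biased operator-norm bound~\eqref{eq:regularization-req-P}.} This is the main technical step. I would invoke a standard concentration inequality for $\mathcal{P}_\Omega - p\mathcal{I}$ (e.g., Lemma~5 of~\cite{chen2020noisy}) of the form
\[
\|(\mathcal{P}_\Omega - p\mathcal{I})(\bm{Z})\|\lesssim \sqrt{np\log n}\,\|\bm{Z}\|_\infty + \log n\,\|\bm{Z}\|_{2,\infty},
\]
valid with high probability under the sample-size assumption. Applied to $\bm{Z}=\bm{L}_{\mathrm{ncvx}}-\bm{L}^\star$, the $\ell_\infty$ bound from~\eqref{eq:ncvx-iterates-bound-XY-L-infty} together with an $\ell_{2,\infty}$ bound derived from~\eqref{eq:ncvx-itr-bound-2infty} (combined with $\|\bm{L}^\star\|_\infty\le \mu r\sigma_{\max}/n$ and $\|\bm{F}^\star\|_{2,\infty}^{2}\le\mu r\sigma_{\max}/n$) plug in to give an estimate of order $\sqrt{\kappa^{4}\mu^{2}r^{2}\log^{3}n/(np)}\cdot n\sigma$; this is $\le \tfrac{C_\lambda}{16}\sqrt{np}\,\sigma = \tfrac{\lambda}{16}np^{1/2}\sigma$ precisely under the sample-size condition $n^{2}p\gg \kappa^{4}\mu^{2}r^{2}n\log^{3}n$ of Theorem~\ref{thm:main}.

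\textbf{Where the difficulty lies.} The hardest ingredient is~\eqref{eq:regularization-req-P}: Frobenius or spectral control on $\bm{L}_{\mathrm{ncvx}}-\bm{L}^\star$ is far too coarse for the de-biased operator-norm bound, so the leave-one-out machinery underlying~\eqref{eq:ncvx-itr-bound-2infty} and~\eqref{eq:ncvx-iterates-bound-XY-L-infty} is essential. The sample-size requirement in Theorem~\ref{thm:main} is calibrated exactly to make this de-biasing bound meet the $\sqrt{np}\,\sigma$ threshold dictated by the regularization level.
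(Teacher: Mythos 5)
Your treatment of the singular-value bounds matches the paper (Weyl's inequality plus the spectral bound \eqref{eq:ncvx-itr-bound-spectral} and the noise-size assumption; rotation invariance of singular values handles $\bm{H}_{t^\star}$), and your decomposition $\mathcal{P}_{\Omega}(\bm{L}_{\mathrm{ncvx}}-\bm{M})=\mathcal{P}_{\Omega}(\bm{L}_{\mathrm{ncvx}}-\bm{L}^{\star})-\mathcal{P}_{\Omega}(\bm{E})$ is also the paper's. However, your argument for \eqref{eq:theta-bound-Lncvx} misuses Lemma~\ref{lemma:injectivity}: that lemma is a \emph{lower} bound ($p^{-1/2}\|\mathcal{P}_{\Omega}(\bm{H})\|_{\mathrm{F}}\ge C_{\mathrm{inj}}\|\bm{H}\|_{\mathrm{F}}$), it only applies to matrices in the tangent space $T$ at $\bm{L}_{\mathrm{ncvx}}$, and $\bm{L}_{\mathrm{ncvx}}-\bm{L}^{\star}$ does not lie in $T$; it cannot deliver the upper bound $\|\mathcal{P}_{\Omega}(\bm{L}_{\mathrm{ncvx}}-\bm{L}^{\star})\|_{\mathrm{F}}\lesssim\sqrt{p}\,\|\bm{L}_{\mathrm{ncvx}}-\bm{L}^{\star}\|_{\mathrm{F}}$ you need. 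The paper avoids this entirely: \eqref{eq:theta-bound-Lncvx} is exactly the maintained induction hypothesis \eqref{eq:ncvx-itr-bound-theta}, which is proved by bounding $\|\mathcal{P}_{\Omega}(\bm{X}_t\bm{Y}_t^{\top}-\bm{L}^{\star})\|_{\mathrm{F}}\lesssim n\sqrt{p}\,\|\bm{X}_t\bm{Y}_t^{\top}-\bm{L}^{\star}\|_{\infty}\lesssim\kappa\mu r\sigma\sqrt{n\log n}\ll n\sqrt{p}\,\sigma$ (entrywise bound times $\sqrt{|\Omega|}$), together with the concentration of $\|\mathcal{P}_{\Omega}(\bm{E})\|_{\mathrm{F}}$ around $np^{1/2}\sigma$. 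Your Frobenius-route could be repaired with a genuine restricted upper-isometry statement, but not with Lemma~\ref{lemma:injectivity} as written.

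For \eqref{eq:regularization-req-P} there are two problems. First, the concentration inequality you invoke for $\mathcal{P}_{\Omega}-p\mathcal{I}$ is a \emph{fixed-matrix} bound, and $\bm{Z}=\bm{L}_{\mathrm{ncvx}}-\bm{L}^{\star}$ is constructed from $\Omega$ and $\bm{E}$, so it cannot be applied directly; one needs a bound that holds uniformly over all matrices $\bm{X}\bm{Y}^{\top}-\bm{L}^{\star}$ with $(\bm{X},\bm{Y})$ in a $\ell_{2,\infty}$-neighborhood of $(\bm{X}^{\star},\bm{Y}^{\star})$. This is precisely what Lemma~4 of \cite{chen2020noisy} provides, and the paper simply cites it, with its hypotheses verified by the leave-one-out estimate \eqref{eq:ncvx-itr-bound-2infty}; acknowledging that leave-one-out is ``essential'' does not by itself resolve the dependence of $\bm{Z}$ on $\Omega$. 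Second, your claimed magnitude $\sqrt{\kappa^{4}\mu^{2}r^{2}\log^{3}n/(np)}\cdot n\sigma$ does \emph{not} fall below the threshold $\tfrac{\lambda}{16}np^{1/2}\sigma=\tfrac{C_{\lambda}}{16}\sqrt{np}\,\sigma$ under the sample-size condition: the requirement would read $\kappa^{4}\mu^{2}r^{2}\log^{3}n\le C p^{2}$, which is impossible; the quantity is off by a factor of order $\sqrt{n/p}$ (the intended estimate should be of order $\kappa^{2}\mu r\log^{3/2}n\,\sigma$, i.e., $\sqrt{\kappa^{4}\mu^{2}r^{2}\log^{3}n/(np)}\cdot\sqrt{np}\,\sigma$). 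So as written the final calibration does not close, whereas the paper's proof delegates both the uniformity and the calibration to the cited lemma of \cite{chen2020noisy}.
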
 

\noindent See Section~\ref{subsec:proof-lemma-error-size} for the proof of this lemma. 
\medskip

% \begin{remark}The lower bound of $\|\mathcal{P}_{\Omega}(\bm{L}_{\mathrm{ncvx}}-\bm{M})\|_{\mathrm{F}}$
% in \eqref{eq:theta-bound-Lncvx} is crucial here. One can consider $\|\mathcal{P}_{\Omega}(\bm{L}_{\mathrm{ncvx}}-\bm{M})\|_{\mathrm{F}}$
% as an inherent rescaling factor of the square error $\|\mathcal{P}_{\Omega}(\bm{L}_{\mathrm{ncvx}}-\bm{M})\|_{\mathrm{F}}^{2}$
% and its subgradients. In this view, Equation~\eqref{eq:theta-bound}
% ensures that $\|\mathcal{P}_{\Omega}(\bm{L}_{\mathrm{ncvx}}-\bm{M})\|_{\mathrm{F}}\asymp np^{1/2}\sigma$
% gives a correct scaling that removes the dependency of $\sigma$ from
% the subgradients. A significantly smaller $\|\mathcal{P}_{\Omega}(\bm{L}_{\mathrm{ncvx}}-\bm{M})\|_{\mathrm{F}}$
% would result in effectively less regularization and a gap between
% $\bm{L}_{\mathrm{ncvx}}$ and $\bm{L}_{\mathrm{cvx}}$.\end{remark}

For notational simplicity, we define 
\[
g(\bm{X},\bm{Y})\coloneqq f(\boldsymbol{X},\boldsymbol{Y},\|\mathcal{P}_{\Omega}(\bm{X}\bm{Y}^{\top}-\bm{M})\|_{\mathrm{F}})=\|\mathcal{P}_{\Omega}(\bm{X}\bm{Y}^{\top}-\bm{M})\|_{\mathrm{F}}+\frac{\lambda}{2}\left(\|\bm{X}\|_{\mathrm{F}}^{2}+\|\bm{Y}\|_{\mathrm{F}}^{2}\right).
\]
In other words, $g(\bm{X},\bm{Y})$ is the minimal value of $f(\bm{X},\bm{Y},\theta)$
when $(\bm{X},\bm{Y})$ is fixed. 

Now we are ready to present the key lemma of this section, which relates
the difference between $\bm{L}_{\mathrm{ncvx}}$ and $\bm{L}_{\mathrm{cvx}}$
to the size of the gradient $\nabla g(\bm{X}_{\mathrm{ncvx}},\bm{Y}_{\mathrm{ncvx}})$.
The proof is deferred to Section~\ref{subsec:Proof-of-Lemma-ncvx-cvx-full}. 

\begin{lemma}\label{lemma:ncvx-to-cvx-full}

Suppose that $(\bm{X}_{\mathrm{ncvx}},\bm{Y}_{\mathrm{ncvx}})$ has small gradient
in the sense that 
\begin{equation}
\left\Vert \nabla g(\bm{X}_{\mathrm{ncvx}},\bm{Y}_{\mathrm{ncvx}})\right\Vert _{\mathrm{F}}\le\frac{\sqrt{\sigma_{\min}}}{280\kappa}\max\left\{ C_{\mathrm{inj}}\sqrt{p},\frac{1}{2}\lambda^{2}n\sigma\right\} .\label{eq:gradient-size-bound}
\end{equation}
Then on the event that Lemmas~\ref{lemma:E-bound}-\ref{lemma:error-size} hold, 
any minimizer \textbf{$\bm{L}_{\mathrm{cvx}}$} of the convex
program~\eqref{eq:sqrt-mc}\textbf{ }satisfies 
\[
\left\Vert \bm{L}_{\mathrm{ncvx}}-\bm{L}_{\mathrm{cvx}}\right\Vert _{\mathrm{F}}\le\frac{\lambda\kappa^{2}}{\sqrt{p\sigma_{\min}}}n\sigma\|\nabla g(\bm{X}_{\mathrm{ncvx}},\bm{Y}_{\mathrm{ncvx}})\|_{\mathrm{F}}.
\]

\end{lemma}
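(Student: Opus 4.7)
The plan is to combine the convex optimality of $\bm{L}_{\mathrm{cvx}}$ with the approximate first-order stationarity of $(\bm{X}_{\mathrm{ncvx}},\bm{Y}_{\mathrm{ncvx}})$. Let $\bm{H}\coloneqq\bm{L}_{\mathrm{cvx}}-\bm{L}_{\mathrm{ncvx}}$, let $\bm{L}_{\mathrm{ncvx}}=\bm{U}\bm{\Sigma}\bm{V}^{\top}$ be the thin SVD with tangent space $T$, and set $\bm{A}\coloneqq\theta_{\mathrm{ncvx}}^{-1}\mathcal{P}_{\Omega}(\bm{L}_{\mathrm{ncvx}}-\bm{M})$, which is the unit Frobenius-norm ``direction'' of the $\ell_{2}$-fidelity subdifferential at $\bm{L}_{\mathrm{ncvx}}$ and is well defined by the lower bound on $\theta_{\mathrm{ncvx}}$ in~\eqref{eq:theta-bound-Lncvx}.

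The first step is to extract a ``master inequality'' from convex optimality. Choose $\bm{W}\in T^{\perp}$ with $\|\bm{W}\|\le1$ attaining $\langle\bm{W},\bm{H}_{T^{\perp}}\rangle=\|\bm{H}_{T^{\perp}}\|_{*}$; then $\bm{A}+\lambda(\bm{U}\bm{V}^{\top}+\bm{W})\in\partial F(\bm{L}_{\mathrm{ncvx}})$ with $F(\bm{L})\coloneqq\|\mathcal{P}_{\Omega}(\bm{L}-\bm{M})\|_{\mathrm{F}}+\lambda\|\bm{L}\|_{*}$. Combining $F(\bm{L}_{\mathrm{cvx}})\le F(\bm{L}_{\mathrm{ncvx}})$ with the second-order (Bregman) expansion of the Euclidean norm at $\mathcal{P}_{\Omega}(\bm{L}_{\mathrm{ncvx}}-\bm{M})$, which contributes a nonnegative term $\|(\mathcal{P}_{\Omega}\bm{H})_{\perp}\|_{\mathrm{F}}^{2}/[2(\theta_{\mathrm{ncvx}}+\|\mathcal{P}_{\Omega}\bm{H}\|_{\mathrm{F}})]$ (with $\perp$ denoting orthogonality to $\bm{A}$), yields
\[
\frac{\|(\mathcal{P}_{\Omega}\bm{H})_{\perp}\|_{\mathrm{F}}^{2}}{2(\theta_{\mathrm{ncvx}}+\|\mathcal{P}_{\Omega}\bm{H}\|_{\mathrm{F}})}+\bigl(\lambda-\|\mathcal{P}_{T^{\perp}}\bm{A}\|\bigr)\|\bm{H}_{T^{\perp}}\|_{*}\le\epsilon\,\|\bm{H}_{T}\|_{\mathrm{F}},
\]
where $\epsilon\coloneqq\|\mathcal{P}_{T}\bm{A}+\lambda\bm{U}\bm{V}^{\top}\|_{\mathrm{F}}$.

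The second step bounds the two constants $\epsilon$ and $\|\mathcal{P}_{T^{\perp}}\bm{A}\|$. For the latter, write $\bm{A}=\theta_{\mathrm{ncvx}}^{-1}[\mathcal{P}_{\Omega}(\bm{L}_{\mathrm{ncvx}}-\bm{L}^{\star})-\mathcal{P}_{\Omega}(\bm{E})]$ and apply \eqref{eq:regularization-req-P} and Lemma~\ref{lemma:E-bound}, together with $\theta_{\mathrm{ncvx}}\ge\tfrac{1}{2}np^{1/2}\sigma$ from~\eqref{eq:theta-bound-Lncvx}, to get $\|\mathcal{P}_{T^{\perp}}\bm{A}\|\le\lambda/2$. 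For $\epsilon$, the stationarity equations $\nabla_{\bm{X}}g=\theta_{\mathrm{ncvx}}^{-1}\mathcal{P}_{\Omega}(\bm{L}_{\mathrm{ncvx}}-\bm{M})\bm{Y}_{\mathrm{ncvx}}+\lambda\bm{X}_{\mathrm{ncvx}}$ and its $\bm{Y}$-analog, right-multiplied by $\bm{Y}_{\mathrm{ncvx}}^{\dagger}$ and left-multiplied by $(\bm{X}_{\mathrm{ncvx}}^{\top})^{\dagger}$ respectively, express $\bm{A}\bm{V}\bm{V}^{\top}+\lambda\bm{U}\bm{V}^{\top}$ and $\bm{U}\bm{U}^{\top}\bm{A}+\lambda\bm{U}\bm{V}^{\top}$ as $\nabla g$ rescaled by $O(1/\sqrt{\sigma_{\min}})$, using the singular-value bounds on the factors from Lemma~\ref{lemma:error-size} and the near-balancedness implied by smallness of $\|\nabla g\|_{\mathrm{F}}$; this leads to $\epsilon\lesssim\kappa\|\nabla g\|_{\mathrm{F}}/\sqrt{\sigma_{\min}}$.

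The final step collapses everything into a bound on $\|\bm{H}\|_{\mathrm{F}}$. The master inequality immediately gives (i) $\|\bm{H}_{T^{\perp}}\|_{*}\lesssim(\epsilon/\lambda)\|\bm{H}_{T}\|_{\mathrm{F}}$ and (ii) $\|(\mathcal{P}_{\Omega}\bm{H})_{\perp}\|_{\mathrm{F}}^{2}\lesssim\epsilon\|\bm{H}_{T}\|_{\mathrm{F}}(\theta_{\mathrm{ncvx}}+\|\mathcal{P}_{\Omega}\bm{H}\|_{\mathrm{F}})$. Coupling the Pythagorean identity $\|\mathcal{P}_{\Omega}\bm{H}\|_{\mathrm{F}}^{2}=|\langle\bm{A},\bm{H}\rangle|^{2}+\|(\mathcal{P}_{\Omega}\bm{H})_{\perp}\|_{\mathrm{F}}^{2}$ with the tangent/normal decomposition of $\bm{A}$, which gives $|\langle\bm{A},\bm{H}\rangle|\lesssim(\lambda\sqrt{r}+\epsilon)\|\bm{H}_{T}\|_{\mathrm{F}}$, controls $\|\mathcal{P}_{\Omega}\bm{H}\|_{\mathrm{F}}$ in terms of $\|\bm{H}_{T}\|_{\mathrm{F}}$. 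Substituting into the injectivity estimate of Lemma~\ref{lemma:injectivity},
\[
\|\bm{H}_{T}\|_{\mathrm{F}}\le C_{\mathrm{inj}}^{-1}p^{-1/2}\bigl(\|\mathcal{P}_{\Omega}\bm{H}\|_{\mathrm{F}}+\|\bm{H}_{T^{\perp}}\|_{\mathrm{F}}\bigr),
\]
produces a self-consistent (quadratic) inequality in $\|\bm{H}_{T}\|_{\mathrm{F}}^{1/2}$. The hypothesis $\|\nabla g\|_{\mathrm{F}}\le\sqrt{\sigma_{\min}}/(280\kappa)\cdot\max\{C_{\mathrm{inj}}\sqrt{p},\tfrac{1}{2}\lambda^{2}n\sigma\}$ is exactly what is needed to ensure the linear-in-$\|\bm{H}_{T}\|_{\mathrm{F}}$ coefficients on the right can be absorbed into the left, giving $\|\bm{H}_{T}\|_{\mathrm{F}}\lesssim\lambda\kappa^{2}\,n\sigma/\sqrt{p\sigma_{\min}}\cdot\|\nabla g\|_{\mathrm{F}}$. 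Since $\|\bm{H}_{T^{\perp}}\|_{\mathrm{F}}\le\|\bm{H}_{T^{\perp}}\|_{*}\lesssim(\epsilon/\lambda)\|\bm{H}_{T}\|_{\mathrm{F}}$ with $\epsilon/\lambda\ll1$ under the same hypothesis, $\|\bm{H}\|_{\mathrm{F}}^{2}=\|\bm{H}_{T}\|_{\mathrm{F}}^{2}+\|\bm{H}_{T^{\perp}}\|_{\mathrm{F}}^{2}\lesssim\|\bm{H}_{T}\|_{\mathrm{F}}^{2}$ completes the proof.

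I expect the main obstacle to be the non-smoothness of the $\ell_{2}$-fidelity: its Bregman divergence degenerates in the direction of the residual $\bm{A}$, so only the component $(\mathcal{P}_{\Omega}\bm{H})_{\perp}$ perpendicular to $\bm{A}$ is controlled quadratically, whereas the parallel part $\langle\bm{A},\bm{H}\rangle$ must be bounded separately via the subgradient inequality on the nuclear-norm side. Reconciling these two bounds through injectivity produces a quadratic inequality whose solvability hinges on the precise smallness of $\|\nabla g\|_{\mathrm{F}}$ codified in the hypothesis (which is why the ``$\max$'' of two natural scales appears there).
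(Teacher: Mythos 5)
Your overall architecture mirrors the paper's: exploit convex optimality of $\bm{L}_{\mathrm{cvx}}$ together with the subgradient representation of the fidelity term at $\bm{L}_{\mathrm{ncvx}}$, show that $\bm{\Delta}$ lives mostly in the tangent space $T$, control $\|\mathcal{P}_{\Omega}(\bm{\Delta})\|_{\mathrm{F}}$, and close the loop with the injectivity bound of Lemma~\ref{lemma:injectivity}; your Bregman-type quadratic term perpendicular to $\bm{A}$ plays the same role as the paper's identity \eqref{eq:P-Omega-Delta-Square}, and your bound $\epsilon\lesssim\kappa\|\nabla g\|_{\mathrm{F}}/\sqrt{\sigma_{\min}}$ is exactly the content of Part~1 of Lemma~\ref{lemma:residual-size-in-T} (via the factor representation of Lemma~\ref{lemma:XY-decomposition-Q}). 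So the route is essentially the paper's, not a genuinely different one.

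There is, however, a genuine gap at the step you treat as routine: the claim $\|\mathcal{P}_{T^{\perp}}(\bm{A})\|\le\lambda/2$. Writing $\bm{A}=\theta^{-1}\left[\mathcal{P}_{\Omega}(\bm{L}_{\mathrm{ncvx}}-\bm{L}^{\star})-\mathcal{P}_{\Omega}(\bm{E})\right]$ and invoking \eqref{eq:regularization-req-P}, Lemma~\ref{lemma:E-bound}, and the lower bound in \eqref{eq:theta-bound-Lncvx} only controls $\theta^{-1}\mathcal{P}_{\Omega}(\bm{E})$ and the debiased piece $\theta^{-1}\left[\mathcal{P}_{\Omega}(\bm{L}_{\mathrm{ncvx}}-\bm{L}^{\star})-p(\bm{L}_{\mathrm{ncvx}}-\bm{L}^{\star})\right]$, each by $\lambda/8$. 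The leftover term is $\theta^{-1}p\,\mathcal{P}_{T^{\perp}}(\bm{L}_{\mathrm{ncvx}}-\bm{L}^{\star})=-\theta^{-1}p\,\mathcal{P}_{T^{\perp}}(\bm{L}^{\star})$, and the natural operator-norm bound $p\|\bm{L}_{\mathrm{ncvx}}-\bm{L}^{\star}\|/\theta\lesssim\kappa/\sqrt{n}\asymp\kappa\lambda$ is \emph{not} $O(\lambda)$ when $\kappa$ grows (the theorem permits $\kappa$ up to a polynomial in $n$); even a second-order Davis--Kahan refinement leaves a factor scaling like $\kappa/\sqrt{\mu r\log n}$ times $\lambda$. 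If this bound fails, the coefficient $\lambda-\|\mathcal{P}_{T^{\perp}}\bm{A}\|$ in your master inequality can be negative and your Step~1 collapses. This is precisely where the paper spends most of its effort: Part~2 of Lemma~\ref{lemma:residual-size-in-T} does not bound this term by norms at all, but absorbs the large rank-$r$ matrix $\theta^{-1}p\bm{L}^{\star}$ into the tangent-space block, shows via the representation $\bm{X}=\bm{U}\bm{\Sigma}^{1/2}\bm{Q}$, $\bm{Y}=\bm{V}\bm{\Sigma}^{1/2}\bm{Q}^{-\top}$ that the tangent block has its top $r$ singular values above $\lambda/2$ (cf.\ \eqref{eq:singular-value-lb}, which needs \eqref{eq:QQT-I} and the auxiliary bound $\|\mathcal{P}_{T}(\tilde{\bm{R}})\|\le\lambda\theta/4$), and then uses Weyl's inequality on the remaining singular values to conclude $\|\mathcal{P}_{T^{\perp}}(\bm{R})\|<1/2$. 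Your proposal is missing this rank/Weyl argument, so the key estimate $\|\mathcal{P}_{T^{\perp}}\bm{A}\|\le\lambda/2$ is unjustified as stated.
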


\begin{remark}

Observe that if $\|\nabla g(\bm{X}_{\mathrm{ncvx}},\bm{Y}_{\mathrm{ncvx}})\|_{\mathrm{F}}=0$, i.e.,
if $\bm{L}_{\mathrm{ncvx}}$ is an exact stationary point of the nonconvex
\est~problem, $\bm{L}_{\mathrm{ncvx}}$ is also a solution to the
convex problem \eqref{eq:sqrt-mc}. 

\end{remark}

With the help of Lemma~\ref{lemma:ncvx-to-cvx-full}, we can prove
Lemma \ref{lemma:ncvx-cvx} now. 

\paragraph{Proof of Lemma~\ref{lemma:ncvx-cvx}.} 

First, Lemma~\ref{lemma:induction} tells us that the nonconvex solution
$(\bm{X}_{t^{\star}},\bm{Y}_{t^{\star}})$ satisfies the bound~\eqref{eq:gradient-size-bound}
on the size of the gradient. This together with Lemmas~\ref{lemma:E-bound}
to \ref{lemma:error-size} allows us to invoke Lemma~\ref{lemma:ncvx-to-cvx-full}
to obtain
\begin{align*}
\left\Vert \bm{L}_{\mathrm{ncvx}}-\bm{L}_{\mathrm{cvx}}\right\Vert _{\mathrm{F}} & \lesssim\frac{\lambda\kappa^{2}}{\sqrt{p\sigma_{\min}}}n\sigma\|\nabla g(\bm{X}_{\mathrm{ncvx}},\bm{Y}_{\mathrm{ncvx}})\|_{\mathrm{F}} \lesssim\frac{1}{n^{5}}\frac{\lambda\sigma}{\sigma_{\min}}\|\bm{L}^{\star}\|_{\mathrm{F}},
\end{align*}
where the last inequality uses the gradient upper bound~\eqref{eq:gradient-size-bound}, $\|\bm{L}^{\star}\|_{\mathrm{F}}\ge\|\bm{L}^{\star}\|\ge\sigma_{\max}=\kappa\sigma_{\min}$,
and the fact that the sample size assumption $n^{2}p\ge C_{\mathrm{sample}}\kappa^{4}\mu^{2}r^{2}n\log^{3}n$
implies $np\gtrsim1$ and $\kappa\lesssim n$.

\section{Simulation}

In this section, we further illustrate the performance of the tuning-free
square root matrix completion through two sets of comparative simulation studies.
First we compare the performance of \est~to the non-sqaure-root estimator \eqref{eq:mc}
with oracle and cross-validated parameters. This allows
us to examine whether we sacrifice a significant amount of performance
in achieving the tuning-free property. Second, we do the same comparison on approximately low rank matrices. This helps us understand how robust the estimator is against misspecified low-rank assumption.

\paragraph*{Comparing \est~with standard approach \eqref{eq:mc}.}

For the non-square-root approach \eqref{eq:mc}, as the sampling probability $p$ and
noise level $\sigma$ is unknown, the regularization parameter needs
to be carefully chosen. Here we compare \est~with \eqref{eq:mc} using oracle
and $k$-fold cross-validated regularization parameters, namely 
\[
\lambda_{\mathrm{oracle}}\coloneqq\arg\min_{\lambda}\quad\left\Vert \bm{L}^{\star}-\hat{\bm{L}}_{\lambda,\Omega}\right\Vert _{\mathrm{F}},
\]
\[
\lambda_{\mathrm{CV}}\coloneqq\arg\min_{\lambda}\quad\sum_{i=1}^{k}\left\Vert \mathcal{P}_{\Omega_{i}}\left(\bm{L}-\hat{\bm{L}}_{\lambda,\Omega_{-i}}\right)\right\Vert _{\mathrm{F}}^{2},
\]
where 
\[
\hat{\bm{L}}_{\lambda,\Omega}\coloneqq\arg\min_{\bm{L}\in\mathbb{R}^{n\times n}}\quad\sum_{(i,j)\in\Omega}(L_{ij}-M_{ij})^{2}+\lambda\|\bm{L}\|_{*}
\]
with $\Omega_{i}$ being the $i$-th fold of the sampled entries and $\Omega_{-i}\coloneqq\Omega\setminus\Omega_{i}$
. Due to computational limit, our experiment uses estimates $\hat{\lambda}_{\mathrm{oracle}},\hat{\lambda}_{\mathrm{CV}}$
obtained by taking minimum over a discrete set of parameters that is close
to the true $\lambda_{\mathrm{oracle}}$. In practice $\lambda_{\mathrm{oracle}}$ is inaccessible
as we do not know $\bm{L}^{\star}$. Meanwhile $\lambda_{\mathrm{CV}}$
takes $k\cdot n_{\lambda}$ runs of an algorithm for \eqref{eq:mc} to obtain,
where $n_{\lambda}$ is the number of $\lambda$'s one tries in cross-validation. This can be computationally prohibitive when the matrices of interest have different sampling rate $p$ and noise level $\sigma$, in which case cross-validation is needed for each matrix in order to get a reasonable $\lambda$. In comparison, the tuning-free property of \est~makes the regularization parameter much easier to obtain.

\begin{figure}
\begin{minipage}[t]{0.45\columnwidth}%
\begin{center}
\includegraphics[width=1\textwidth]{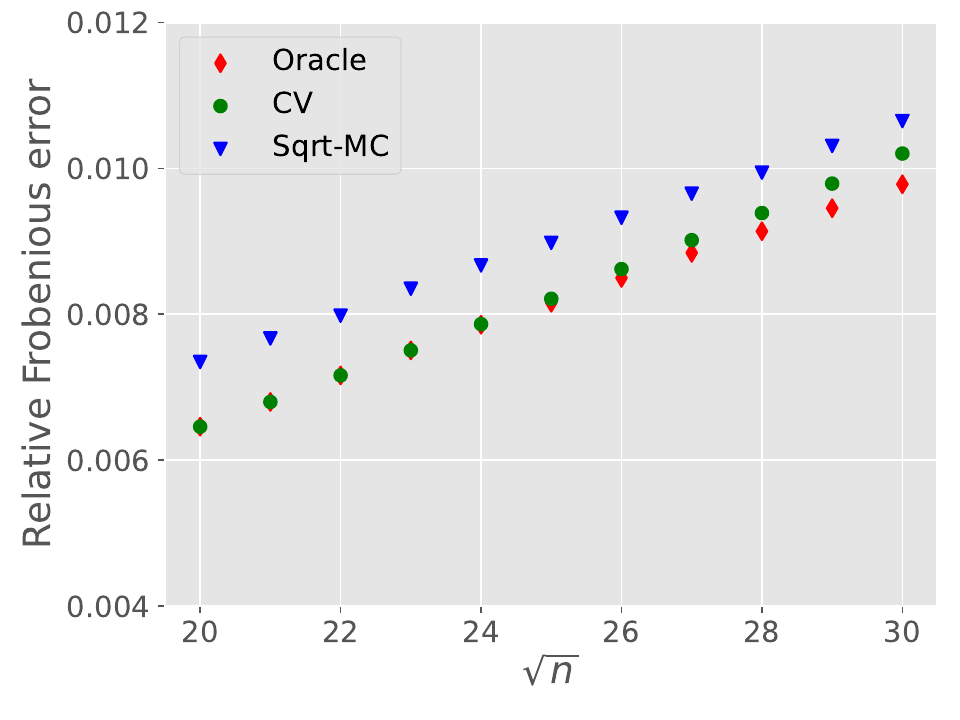}
\par\end{center}
\begin{center}
(a)
\par\end{center}%
\end{minipage}\hfill{}%
\begin{minipage}[t]{0.45\columnwidth}%
\begin{center}
\includegraphics[width=1\textwidth]{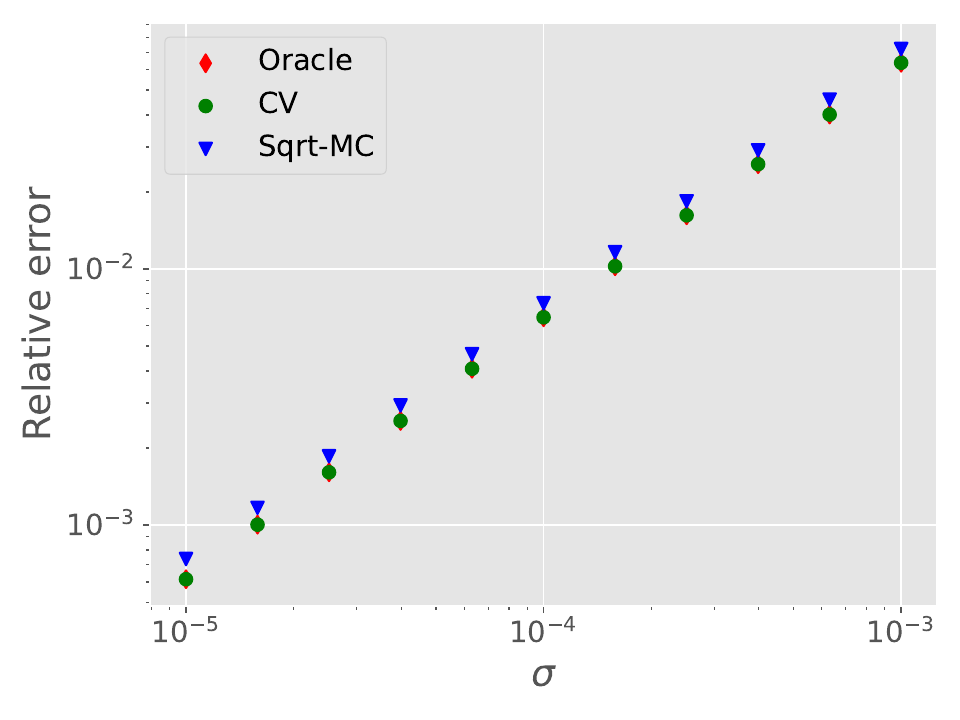}
\par\end{center}
\begin{center}
(b)
\par\end{center}%
\end{minipage}
\caption{\label{fig:oracle} (a) Relative Frobenius estimation error of 
\est~and solution of \eqref{eq:mc} with oracle and cross-validated $\lambda$ vs. problem size $\sqrt{n}$. The parameters are
fixed as $\sigma=10^{-4}, r=5, p=0.5$. (b) Relative Frobenius estimation error of \est~and solution of \eqref{eq:mc} with oracle and cross-validated $\lambda$ vs. noise size $\sigma$ on a log-log scale. The parameters are fixed as $n=400, r=5, p=0.5$. In both settings, $k = 10$ for the number of folds in cross validation and each point represents the average of 10 independent trials.}
\end{figure}

In each run of the experiment, we first generate an $n\times n$ matrix $\bm{M}$
as in \eqref{eq:obs} and calculate its estimator using \est~with fixed regularization parameter $\lambda = 2/\sqrt{n}$ and \eqref{eq:mc} with $\hat{\lambda}_{\mathrm{oracle}}$
and 10-fold cross-validated $\hat{\lambda}_{\mathrm{CV}}$. Figure~\ref{fig:oracle} shows the relative Frobenius
errors of the different methods across varying matrix size $n$ and varying noise level $\sigma$. In both settings, we can see while \est~has very close estimation error to that of \eqref{eq:mc}. Moreover their linear
trends over problem size $\sqrt{n}$ and noise size $\sigma$ are similar, as we expect from their identical error rate. This shows
that by using \est, we achieve the tuning-free property with a minor sacrifice in the rate of estimation performance.

\paragraph*{Performance with approximately low-rank matrices.}

Another point of interest is whether \est~is robust to misspecification of the low-rank assumption. Here we conduct the experiment with approximately
rank-$r$ matrices $\bm{L}^{\star}$ that singular values $\sigma_{1},\cdots,\sigma_{r}=1$
and $\sigma_{l}\propto(n-l)^{-2}$ such that $\sum_{l=r+1}^{n}\sigma_{l}\eqqcolon\gamma$.
This parameter $\gamma$ can be viewed as a measurement of deviation from the set rank-$r$ matrices, as
\[
\gamma=\min_{\bm{L}:\mathrm{rank}(\bm{L})=r}\|\bm{L}^{\star}-\bm{L}\|_{\ast}.
\]
We then perform the same experiments as above, i.e., comparing \est~to \eqref{eq:mc} with oracle and cross-validated $\lambda$. Figure \ref{fig:approx_lr_vs_oracle} shows their respective estimation error vs $\gamma$. We can see that the estimation error for all three methods increases when $\gamma$ increases and the increments are small and comparable across the three methods. This shows that \est~and \eqref{eq:mc} to are somewhat robust to the violation of low rank assumption.

In addition, we showcase an interesting discovery which compares the robustness of convex and nonconvex version of \est~to approximate low-rankness. We generate the ground-truth matrices that is approximately low rank and calculate \est~and the nonconvex solution of \eqref{eq:nonconvex-opt-theta} assuming the rank is $r$. Figure~\ref{fig:approx_lr} shows
that the performance of \est~for approximately low rank matrices
is close to the case with exact low-rankness ($\gamma=0$), while the nonconvex method suffers a much greater loss in estimation accuracy. The difference between convex and nonconvex method is close to 0 when $\gamma = 0$ and increases drastically as $\gamma$ increases. To some extent, this is expected as the convex method does not require the input of rank information.

\begin{figure}
\centering{}\includegraphics[width=0.45\textwidth]{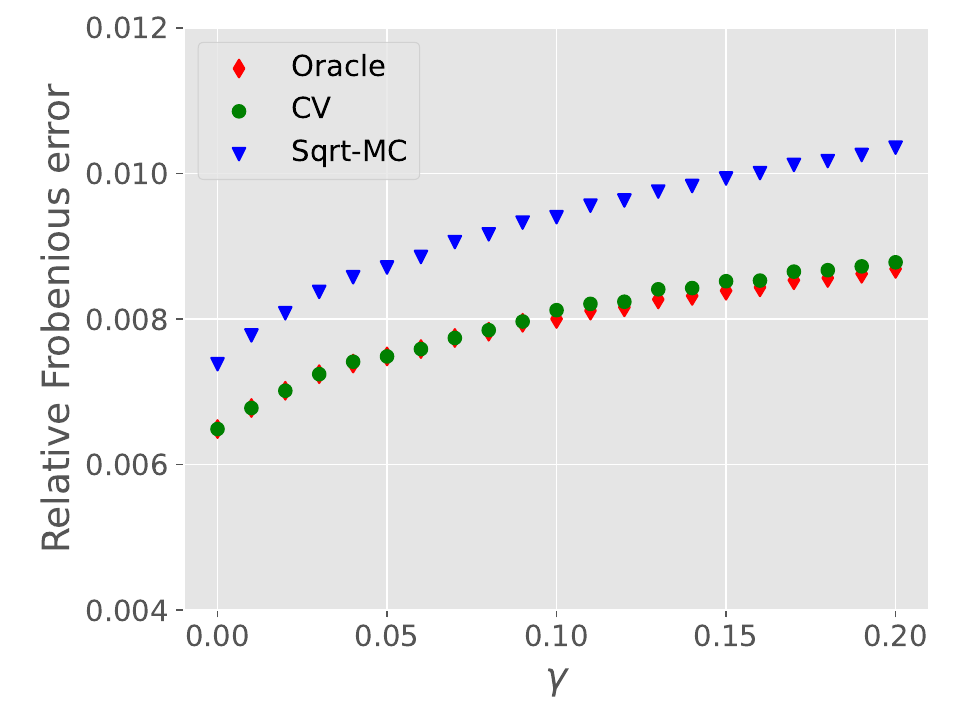}\caption{\label{fig:approx_lr_vs_oracle}  Relative Frobenius estimation error of 
\est~and solution of \eqref{eq:mc} with oracle and cross-validated $\lambda$ vs. $\gamma$ for approximately low rank matrices. The parameters are
chosen as: $n = 400,r=5,p=0.5, \sigma = 10^{-4}, \lambda = 2/\sqrt{n}$ while $\gamma$ varies from $0$ to $0.2$.
Each point represents the average of 10 independent trials.}
\end{figure}

\begin{figure}
\centering{}\includegraphics[width=0.45\textwidth]{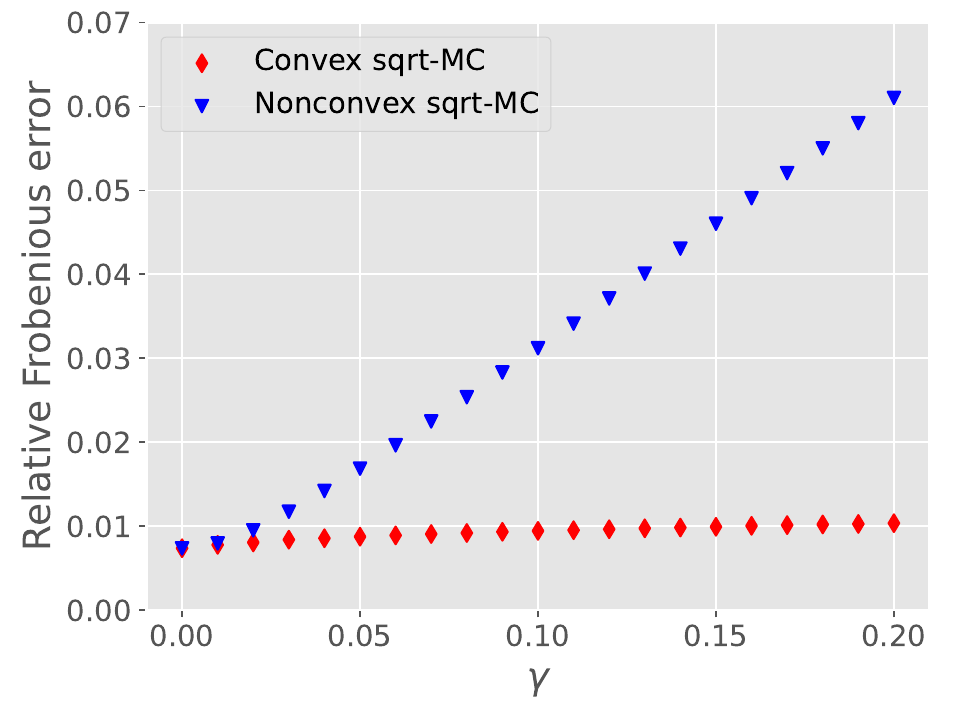}\caption{\label{fig:approx_lr}Relative Frobenius estimation error of 
\est~and \eqref{eq:nonconvex-opt-theta} for approximately low rank matrices. The parameters are
chosen as: $n = 400,r=5,p=0.5, \sigma=10^{-4}, \lambda = 2/\sqrt{n}$ while $\gamma$ varies from $0$ to $0.2$.
Each point represents the average of 10 independent trials.}
\end{figure}

\section{Prior art}

\label{sec:Prior-art}

\paragraph{Matrix completion.}  
Convex relaxation has been extensively studied for the matrix completion problem both in the noiseless setting~\cite{candes2009exact, candes2010power, gross2011recovering, recht2011simpler, chen2015incoherence}, and the noisy case~\cite{candes2010matrix, negahban2012restricted, koltchinskii2011nuclear, klopp2014noisy, chen2020noisy}. In the noiseless setting, convex relaxation achieves exact recovery as soon as the number of observed entries $n^2 p$ exceeds $n r \log n \log r$ \cite{ding2020leave}---roughly the degrees of freedom of a rank-$r$ matrix, which is information-theoretically optimal.  
When it comes to the noisy setting, Cand\`es and Plan~\cite{candes2010matrix} focuses on arbitrary noise (e.g., noise could be deterministic and adversarial), and proves that convex relaxation is stable w.r.t.~the noise size. The theoretical guarantees for convex relaxation are strengthened by Chen et al.~\cite{chen2020noisy} in the stochastic noise case, which is the same setting we study in the current paper. Such a discrepancy between stochastic and deterministic noise for convex relaxation is also documented in~\cite{krahmer2021convex}. 

Pioneered by the work~\cite{keshavan2010matrix, keshavan2009matrix}, nonconvex optimization has gained a lot of attentions during the past decade for solving matrix completion owing to its computational efficiency. Efficient computational and statistical guarantees have been provided for manifold optimization~\cite{keshavan2010matrix, keshavan2009matrix}, gradient descent~\cite{ma2018implicit, chen2020nonconvex}, projected gradient descent~\cite{chen2015fast, zheng2016convergence}, alternating minimization~\cite{jain2013low, hardt2014understanding}, scaled gradient descent~\cite{tong2021accelerating}, singular value projection~\cite{ding2020leave}, etc. See the recent surveys~\cite{davenport2016overview, chi2019nonconvex} for more related work on matrix completion.

% Matrix Completion has attracted a lot of attentions as it has appealing
% potentials in applications. It has extensive use (\cite{nguyen2019low,ramlatchan2018survey,li2019survey})
% in recommmendation system, signal processing, etc. Various algorithms
% have emerged. To name a few, there are Singular Value Thresholding
% \cite{cai2010singular}, variants of gradient descent \cite{Chen2020,ma2018implicit,chen2015fast,jin2016provable,sun2016guaranteed},
% and alternative minimization \cite{jain2013low,hardt2014understanding}.

% One major line of theoretical works relevant to our paper is in providing
% theoretical guarantees for the convex problem of nuclear norm minimization.
% \cite{candes2009exact} showed an exact recovery for matrix completion
% in a noiseless setting. Later on people made more progress in the
% noisy case. \cite{candes2010matrix} gave a bound that is $O(\sigma n^{3/2})$
% in our notations. \cite{negahban2012restricted} established
% the minimax lower round of the estimation error and presented an upper
% bound that is optimal for $\sigma>\|\bm{L}^{\star}\|_{\infty}$. More
% recently \cite{chen2020noisy} proved that nuclear norm minimization
% achieves the minimax optimal rate of $O(\sigma\sqrt{n/p})$ via an
% analysis of the nonconvex solution similar to our approach in this
% work. However, most of the works in this direction rely on a choice
% of regularization parameter $\lambda$ that is dependent on the noise
% level $\sigma$.

\paragraph{Tuning-free methods.}
A variety of tuning-free methods have been proposed to tackle high-dimensional linear regression. The seminal work~\cite{belloni2011square}
proposes the square-root Lasso estimator which does not rely on knowing the size of the noise and is also statistically optimal. \cite{sun2012scaled} proposes an equivalent method named scaled sparse linear regression, which originates from the concomitant scale estimation~\cite{huber2011robust, owen2007robust}. \cite{lederer2015don} proposes TREX, a method similar to square-root Lasso and
is completely parameter-free. \cite{wang2020tuning} borrows ideas from non-parametric statistics and proposes Rank Lasso, whose optimal choice of tuning parameter can be simulated easily in the
case with unknown variance of the noise. See~\cite{wu2019survey}
for a survey on the selection of tuning-parameters for high-dimensional
regression and \cite{giraud2012high} for a survey on regression with
unknown variance of noise.

% Now we move to the matrix estimation problem, which is the focus of our paper. 
% Inspired by square-root Lasso, \cite{klopp2014noisy} proposes \est~with an extra max-norm constraint. 
% More recently, \cite{zhang2021square} extends the idea to the robust PCA problem. 

% In addition, \cite{gaiffas2017high} proposed a $\sigma$-independent
% choice of regularization parameter for the penalized $\ell_{2}$ loss
% optimization \eqref{eq:mc} that achieves the same sub-optimal rate
% as \cite{negahban2012restricted,klopp2014noisy}. \cite{gilbert2019tuning}
% looked at the Bayesian approach and proposed a method without hyper-parameters
% using global-local priors like horseshoe prior.

\paragraph{Bridging convex and nonconvex optimization.} The connections between convex and
nonconvex optimization has been extensively used in a recent line of work. Chen et al.~\cite{chen2020noisy} uses this to prove the optimality of the vanilla least-squares estimator for noisy matrix completion; Later, the papers~\cite{chen2021bridging, chen2021convex, wang2022robust} extend the technique to the robust PCA problem, the blind deconvolution problem, and matrix completion with heavy-tailed noise. 

\paragraph{Leave-one-out analysis.} Leave-one-out analysis is powerful in decoupling statistical dependence and obtain element-wise performance guarantees. It has been successfully applied to high-dimensional regression~\cite{el2013robust, el2018impact}, phase synchronization~\cite{zhong2018near}, ranking~\cite{chen2019spectral, chen2022partial}, matrix completion~\cite{ma2018implicit, chen2020nonconvex, abbe2020entrywise}, reinforcement learning~\cite{pananjady2020instance}, high-dimensional inference~\cite{chen2019inference, yan2021inference} to name a few. Interested readers are referred to a recent overview~\cite{chen2021spectral} for detailed discussions.  

\section{Discussions}

Focusing on the noisy matrix completion problem, this paper shows
that a tuning-free estimator---\est~achieves
optimal statistical performance. This opens up several interesting
avenues for future research. Below, we list a few of them. 
\begin{itemize}
\item \emph{Extensions to robust PCA. }While our work focuses on matrix
completion, a natural extension is to further consider partial observations
with outliers, i.e., robust PCA. As mentioned, Zhang et al.~\cite{zhang2021square}
has studied this problem (with full observation) and provides an error
guarantee of order $O(\sigma n^{2})$, which is sub-optimal in its
dependency on the problem dimension. By contrast, a vanilla least-squares
estimator with noise-size-dependent choice of $\lambda$ has been
shown to be optimal~\cite{chen2021bridging}. It remains to be seen
whether one can devise an optimal tuning-free method for robust PCA
with noise and missing data. 
\item \emph{Inference for }\est~\emph{estimator.} The current paper discusses
solely the estimation performance of the tuning-free estimator. As
statistical inference for matrix completion is equally important,
one wishes to develop inferential procedures around the \est~estimator
as that has been done in the paper~\cite{chen2019inference} for
the vanilla least-squares estimator. 

\item \emph{Robustness to non-uniform design.} In high-dimensional linear regression, optimal tuning-free methods have been developed to be adaptive to both the unknown noise size and the design matrix. In the matrix completion setting, the design is governed by the sampling pattern, which is assumed to be uniform in the current paper. It is of great interest to develop robust and tuning-free approaches for noisy matrix completion with non-uniform sampling that improve over the max-norm constrained estimator in~\cite{klopp2014noisy}. 
\end{itemize}

\bibliographystyle{alpha}
\bibliography{All-of-Bibs}

\appendix

\section{Proof of Lemma~\ref{lemma:induction}} \label{sec:proof-induction}

We prove Lemma~\ref{lemma:induction} via induction.
Since all the algorithms start from the groundtruth, it is trivial
to see that the hypotheses~\eqref{eq:ncvx-iterates-full} hold for
$t=0$. We also record two important properties of the iterates at
$t=0$, namely,   
\begin{equation}
\frac{1}{2}np^{1/2}\sigma\le\theta_{t}\le2np^{1/2}\sigma\label{eq:ncvx-itr-bound-theta}
\end{equation}
and 
\begin{equation}
\|\bm{X}_{t}^{\top}\bm{X}_{t}-\bm{Y}_{t}^{\top}\bm{Y}_{t}\|_{\mathrm{F}}\le C_{\mathrm{B}}\kappa\eta\frac{\sigma}{\sigma_{\min}}\sqrt{\frac{n}{p}}\sqrt{r}\sigma_{\max}^{2},\label{eq:ncvx-itr-bound-balance}
\end{equation}
where $C_{\mathrm{B}} > 0$ is a universal constant. Note that at $t=0$, we have $\theta_0 = \|\mathcal{P}_\Omega(\bm{E})\|_{\mathrm{F}}$, which concentrates sharply around $np^{1/2}\sigma$ under the noise assumption and uniform sampling. 

Now suppose the hypotheses~\eqref{eq:ncvx-iterates-full}, \eqref{eq:ncvx-itr-bound-theta},
and \eqref{eq:ncvx-itr-bound-balance} hold for the $t$-the iterates.
We aim to show that the same set of hypotheses continue to hold for
the $(t+1)$-th iterates. Sections~\ref{sec:induction-1} and~\ref{sec:induction-2} are devoted to this induction step. In addition, we prove the last claim~\eqref{eq:small-grad} in Section~\ref{sec:gradient-size}. In Section~\ref{subsec:proof-lemma-error-size} we prove Lemma~\ref{lemma:error-size} which is a consequence of \eqref{eq:ncvx-iterates-full} and \eqref{eq:ncvx-itr-bound-theta}.

\subsection{Induction on hypotheses~\eqref{eq:ncvx-iterates-full} and \eqref{eq:ncvx-itr-bound-balance}} \label{sec:induction-1}

Define 
\[
\tilde{\lambda}_{t}\coloneqq\lambda\theta_{t},\qquad\text{and}\qquad\tilde{\eta}_{t}\coloneqq\eta/\theta_{t}.
\]
We make a key observation that the $t$-th iterations of Algorithm~\ref{algo:non-cvx-gd}
and \ref{algo:leave-one-out} are exactly the same as the $t$-th iterations
of Algorithm 1 (vanilla gradient descent) and 2 (construction of the
leave-one-out sequence) in the paper \cite{chen2020noisy} with the
parameters $\tilde{\lambda}_{t}$ and $\tilde{\eta}_{t}$. Moreover,
given the induction hypothesis \eqref{eq:ncvx-itr-bound-theta} one
has $\frac{1}{2}n\sqrt{p}\sigma\le\theta_{t-1}\le2n\sqrt{p}\sigma$.
Combine this with our choice of $\lambda=C_{\lambda}n^{-1/2}$ to
see that 
\[\tilde{\lambda}_t\asymp \sigma\sqrt{np}
,\qquad\text{and\ensuremath{\qquad}\ensuremath{\tilde{\eta}_{t}\asymp1/(np\kappa^{3}\sigma_{\max})}}.
\]which are consistent with the choice of $\lambda$ and $\eta$  in \cite{chen2020noisy}. These allow us to invoke Lemmas 10-15 in \cite{chen2020noisy} to
prove that claims \eqref{eq:ncvx-iterates-full} and \eqref{eq:ncvx-itr-bound-balance}
hold for the $(t+1)$-th iterates. 

\subsection{Induction on hypotheses~\eqref{eq:ncvx-itr-bound-theta}} \label{sec:induction-2}

In this section, we aim to show that the claim \eqref{eq:ncvx-itr-bound-theta}
holds for the $(t+1)$-th iterates. 

Observe that
\[
\mathcal{P}_{\Omega}\left(\bm{X}_{t+1}\bm{Y}_{t+1}^{\top}-\bm{M}\right)=\mathcal{P}_{\Omega}\left(\bm{X}_{t+1}\bm{Y}_{t+1}^{\top}-\bm{L}^{\star}\right)-\mathcal{P}_{\Omega}(\bm{E})\text{.}
\]
Similar to the proof of Lemma~\ref{lemma:ncvx-iterates-bound-XY-L},
using the incoherence assumption $\|\bm{F}^{\star}\|_{2,\infty}=\max\left\{ \|\bm{X}^{\star}\|_{2,\infty},\|\bm{Y}^{\star}\|_{2,\infty}\right\} \le\sqrt{\mu r\sigma_{\max}/n},$we
have 
\begin{align*}
\|\bm{X}_{t+1}\bm{Y}_{t+1}^{\top}-\bm{L}^{\star}\|_{\infty} & \le3C_{\mathrm{\infty}}\frac{\sigma}{\sigma_{\min}}\sqrt{\frac{n\log n}{p}}\|\bm{F}^{\star}\|_{2,\infty}\|\bm{F}^{\star}\|_{2,\infty}\\
 & \le3C_{\mathrm{\infty}}\frac{\mu r\sigma_{\max}}{n}\frac{\sigma}{\sigma_{\min}}\sqrt{\frac{n\log n}{p}}.
\end{align*}
Then 
$
\left\Vert \mathcal{P}_{\Omega}\left(\bm{X}_{t+1}\bm{Y}_{t+1}^{\top}-\bm{L}^{\star}\right)\right\Vert _{\mathrm{F}}  \lesssim n\sqrt{p}\|\bm{X}_{t+1}\bm{Y}_{t+1}^{\top}-\bm{L}^{\star}\|_{\infty}
  \lesssim\kappa\mu r\sigma\sqrt{n\log n}.
$
As the sample size satisfies $n^{2}p\gg\kappa^{4}\mu^{2}r^{2}n\log^{3}n$,
we have $\|\mathcal{P}_{\Omega}\left(\bm{X}_{t+1}\bm{Y}_{t+1}^{\top}-\bm{L}^{\star}\right)\|_{\mathrm{F}}\ll n\sqrt{p}\sigma$.
As mentioned before, $\| \mathcal{P}_\Omega(\bm{E}) \|_{\mathrm{F}}$ sharply concentrates around $np^{1/2}\sigma$. Therefore by the triangle inequality, we have
\begin{align*}
\frac{1}{2}\sigma n\sqrt{p}\le\|\mathcal{P}_{\Omega}\left(\bm{X}_{t}\bm{Y}_{t}^{\top}-\bm{M}\right)\|_{\mathrm{F}} & \le2\sigma n\sqrt{p}
\end{align*}
for large enough $n$.

\subsection{Proof of bound~\eqref{eq:small-grad}} \label{sec:gradient-size}

Suppose
for the moment that 
\begin{equation}
f(\boldsymbol{X}_{t},\boldsymbol{Y}_{t},\theta_{t})\le f(\boldsymbol{X}_{t-1},\boldsymbol{Y}_{t-1},\theta_{t-1})-\frac{\eta}{2}\|\nabla_{\bm{X},\bm{Y}}f(\boldsymbol{X}_{t-1},\boldsymbol{Y}_{t-1},\theta_{t-1})\|_{\mathrm{F}}^{2}\label{eq:obj-decrease}
\end{equation}
holds for all $t\geq1$. Then a telescoping argument would yield the
conclusion that 
\begin{align*}
f(\bm{X}_{0},\bm{Y}_{0},\theta_{0})-f(\bm{X}_{t_{0}},\bm{Y}_{t_{0}},\theta_{t_{0}}) & \ge\frac{\eta}{2}\sum_{t=0}^{t_{0}-1}\|\nabla_{\bm{X},\bm{Y}}f(\bm{X}_{t},\bm{Y}_{t},\theta_{t})\|_{\mathrm{F}}^{2}\\
 & \ge\frac{\eta t_{0}}{2}\min_{0\le t<t_{0}}\|\nabla_{\bm{X},\bm{Y}}f(\bm{X}_{t},\bm{Y}_{t},\theta_{t})\|_{\mathrm{F}}^{2}.
\end{align*}
Expanding the left hand side, we see that it is upper bounded by 
\begin{align*}
f(\bm{X}_{0},\bm{Y}_{0},\theta_{0})-f(\bm{X}_{t_{0}},\bm{Y}_{t_{0}},\theta_{t_{0}}) & =\|\mathcal{P}_{\Omega}\text{(}\bm{E})\|_{\mathrm{F}}-\|\mathcal{P}_{\Omega}\text{(}\bm{X}_{t_{0}}\bm{Y}_{t_{0}}^{\top}-\bm{M})\|_{\mathrm{F}}\\
&\quad+\frac{\lambda}{2}\left(\|\bm{X}^{\star}\|_{\mathrm{F}}^{2}-\|\bm{X}_{t_{0}}\|_{\mathrm{F}}^{2}+\|\bm{Y}^{\star}\|_{\mathrm{F}}^{2}-\|\bm{Y}_{t_{0}}\|_{\mathrm{F}}^{2}\right)\\
 & \le\|\mathcal{P}_{\Omega}\text{(}\bm{E})\|_{\mathrm{F}}+\frac{\lambda}{2}\left(\|\bm{X}^{\star}\|_{\mathrm{F}}^{2}-\|\bm{X}_{t_{0}}\bm{H}_{t_{0}}\|_{\mathrm{F}}^{2}+\|\bm{Y}^{\star}\|_{\mathrm{F}}^{2}-\|\bm{Y}_{t_{0}}\bm{H}_{t_{0}}\|_{\mathrm{F}}^{2}\right),
\end{align*}
where the last line uses the nonnegativity of norms and the invariance
of Frobenius norm under rotation. In view of the properties~\eqref{eq:ncvx-iterates-full}
and the noise size assumption $\frac{\sigma}{\sigma_{\min}}\sqrt{\frac{n}{p}}\ll1$, 
we have 
\[
\|\bm{X}^{\star}-\bm{X}_{t_{0}}\bm{H}_{t_{0}}\|_{\mathrm{F}}\lesssim\frac{\sigma}{\sigma_{\min}}\sqrt{\frac{n}{p}}\|\bm{X}^{\star}\|_{\mathrm{F}},\qquad\text{and\ensuremath{\qquad}\ensuremath{\|\bm{X}_{t_{0}}\|}}_{\mathrm{F}}=\|\bm{X}_{t_{0}}\bm{H}_{t_{0}}\|_{\mathrm{F}}\le2\|\bm{X}^{\star}\|_{\mathrm{F}}.
\]
Then, 
\begin{align}
\Bigl|\|\bm{X}^{\star}\|_{\mathrm{F}}^{2}-\|\bm{X}_{t_{0}}\bm{H}_{t_{0}}\|_{\mathrm{F}}^{2}\Bigr| & \le\Bigl|\|\bm{X}^{\star}\|_{\mathrm{F}}-\|\bm{X}_{t_{0}}\bm{H}_{t_{0}}\|_{\mathrm{F}}\Bigr|\left(\|\bm{X}^{\star}\|_{\mathrm{F}}+\|\bm{X}_{t_{0}}\bm{H}_{t_{0}}\|_{\mathrm{F}}\right)\label{eq:X-fro-maximal-change}\\
 & \lesssim\frac{\sigma}{\sigma_{\min}}\sqrt{\frac{n}{p}}\|\bm{X}^{\star}\|_{\mathrm{F}}\|\bm{X}^{\star}\|_{\mathrm{F}}\nonumber \\
 & \le\sigma r\kappa\sqrt{\frac{n}{p}},\nonumber 
\end{align}
where the last line uses the fact that $\|\bm{X}^{\star}\|_{\mathrm{F}}\le\sqrt{r\sigma_{\max}}$.
Similarly, we have  
$
\left|\|\bm{Y}^{\star}\|_{\mathrm{F}}^{2}-\|\bm{Y}_{t_{0}}\bm{H}_{t_{0}}\|_{\mathrm{F}}^{2}\right|\lesssim\sigma r\kappa\sqrt{\frac{n}{p}}$.
These combined with the fact that $\|\mathcal{P}_{\Omega}\text{(}\bm{E})\|_{\mathrm{F}}\lesssim n\sqrt{p}\sigma$ implies ,
as $t_{0}=n^{18},\eta\asymp\sigma/(\sqrt{p}\kappa^{3}\sigma_{\max})$,
and $\lambda\asymp1/\sqrt{n}$, 
\begin{align*}
\mathbf{}\min_{0\le t<t_{0}}\|\nabla_{\bm{X},\bm{Y}}f(\bm{X}_{t},\bm{Y}_{t},\theta_{t})\|_{\mathrm{F}} & \le\left[\frac{f(\bm{X}_{0},\bm{Y}_{0},\theta_{0})-f(\bm{X}_{t_{0}},\bm{Y}_{t_{0}},\theta_{t_{0}})}{\eta t_{0}/2}\right]^{1/2}\\
 & \lesssim\left[\frac{\sigma_{\max}}{n^{18}\sqrt{p}\sigma}\left(\frac{n\sigma}{\sqrt{p}}\right)\right]^{1/2}\\
 & \lesssim\frac{1}{n^{8}}\sqrt{\frac{\sigma_{\max}}{p}}.
\end{align*}
To simplify the expression we use $\kappa\lesssim n$ and $r\lesssim\sqrt{n}$
which are consequences of the sample size assumption $n^{2}\ge n^{2}p\gg\kappa^{4}\mu^{2}r^{2}n\log n$.

\paragraph{Proof of bound~\eqref{eq:obj-decrease}. }

Define $h(\bm{X},\bm{Y})\coloneqq\theta_{t}\left[f(\bm{X},\bm{Y},\theta_{t})-\theta_{t}/2\right]$.
Then $h(\bm{X},\bm{Y})$ matches the form of the objective function
in Lemma 16 of the paper~\cite{chen2020noisy}. Then Lemma 16 therein tells us that 
\[
h(\bm{X}_{t+1},\bm{Y}_{t+1})\le h(\bm{X}_{t},\bm{Y}_{t})-\frac{\tilde{\eta}_{t}}{2}\|\nabla h(\bm{X}_{t},\bm{Y}_{t})\|_{\mathrm{F}}^{2},
\]
where we recall $\tilde{\eta}_{t}=\eta/\theta_{t}$. Rewriting the bound in terms of $f$ yields
\begin{equation}
f(\bm{X}_{t+1},\bm{Y}_{t+1},\theta_{t})\le f(\bm{X}_{t},\bm{Y}_{t},\theta_{t})-\frac{\eta}{2}\|\nabla_{\bm{X},\bm{Y}}f(\bm{X}_{t},\bm{Y}_{t},\theta_{t})\|_{\mathrm{F}}^{2}.\label{eq:f-decrease-f-grad}
\end{equation}
In addition, by the optimality of $\theta_{t+1}$, one has
\begin{equation}
f(\bm{X}_{t+1},\bm{Y}_{t+1},\theta_{t+1})\le f(\bm{X}_{t+1},\bm{Y}_{t+1},\theta_{t}).\label{eq:f-decrease-theta}
\end{equation}
Combining equations~\eqref{eq:f-decrease-f-grad} and~\eqref{eq:f-decrease-theta}
completes the proof.

\subsection{Proof of Lemma~\ref{lemma:error-size}\label{subsec:proof-lemma-error-size}}

By Lemma~\ref{lemma:induction}, we know that $\bm{X}_{\mathrm{ncvx}}$
satisfies
\begin{align*}
\|\bm{X}_{\mathrm{ncvx}}-\bm{X}^{\star}\| & \le C_{\mathrm{op}}\left(\frac{\sigma}{\sigma_{\min}}\sqrt{\frac{n}{p}}\right)\|\bm{X}^{\star}\| \ll \sqrt{\sigma_{\min}},
\end{align*}
where the last relation arises from the noise level assumption $\frac{\sigma}{\sigma_{\min}}\sqrt{\frac{n}{p}}\text{\ensuremath{\ll1/\sqrt{\kappa^{4}\mu r\log n}}}$. Therefore we can apply Weyl's inequality to obtain 
\begin{align*}
\sigma_{\max}(\bm{X}_{\mathrm{ncvx}}) & \le\sqrt{\sigma_{\max}}+\|\bm{X}_{\mathrm{ncvx}}-\bm{X}^{\star}\|\le\sqrt{2\sigma_{\max}};\\
\sigma_{\min}(\bm{X}_{\mathrm{ncvx}}) & \ge\sqrt{\sigma_{\min}}-\|\bm{X}_{\mathrm{ncvx}}-\bm{X}^{\star}\|\ge\sqrt{\sigma_{\min}/2}
\end{align*}
for large enough $n$. These hold similarly for the singular values of
$\bm{Y}_{\mathrm{ncvx}}$. 

On the other hand, the relations \eqref{eq:theta-bound-Lncvx} come
directly from \eqref{eq:ncvx-itr-bound-theta}, and \eqref{eq:regularization-req-P}
follows from Lemma~4 in \cite{chen2020noisy}. 

\section{Proof of Lemma \ref{lemma:ncvx-to-cvx-full}}

\label{subsec:Proof-of-Lemma-ncvx-cvx-full}

To simplify the notation, we denote $\theta\coloneqq\mathcal{\|P}_{\Omega}(\bm{L}_{\mathrm{ncvx}}-\bm{M})\|_{\mathrm{F}}$,
and $\bm{\Delta}\coloneqq\bm{L}_{\mathrm{cvx}}-\bm{L}_{\mathrm{ncvx}}$
throughout this section. In view of Lemma~\ref{lemma:error-size},
we know that $\theta\neq0$, and hence $\theta^{-1}$ is well defined. 

Recall that $\bm{U}\bm{\Sigma}\bm{V}^{\top}$ is the SVD for $\bm{L}_{\mathrm{ncvx}}$,
and $T$ is the tangent space at $\bm{L}_{\mathrm{ncvx}}$. The following
lemma is useful in controlling the size of $\bm{\Delta}$. 

\begin{lemma}\label{lemma:residual-size-in-T}

Under the notations and assumptions of Lemma~\ref{lemma:ncvx-to-cvx-full},
we have
\begin{equation}
\tfrac{1}{\theta}\mathcal{P}_{\Omega}(\bm{L}_{\mathrm{ncvx}}-\bm{M})=-\lambda(\boldsymbol{U}\boldsymbol{V}^{\top}+\bm{R}),\label{eq:f-gradient-residual}
\end{equation}
where $\bm{R}$ is a residual matrix such that 
\begin{align*}
\|\mathcal{P}_{T}(\bm{R})\|_{\mathrm{F}} & \le70\kappa\sigma_{\min}^{-1/2}\|\nabla g(\bm{X},\bm{Y})\|_{\mathrm{F}},\qquad\text{and}\qquad\|\mathcal{P}_{T^{\perp}}(\bm{R})\|<1/2.
\end{align*}

\end{lemma}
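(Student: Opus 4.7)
The plan is to convert the approximate-stationarity information about $(\bm{X}_{\mathrm{ncvx}},\bm{Y}_{\mathrm{ncvx}})$ into structural identities for $\bm{W}\coloneqq\tfrac{1}{\theta}\mathcal{P}_{\Omega}(\bm{L}_{\mathrm{ncvx}}-\bm{M})$. Since $\theta$ is the minimiser of $f$ in the third slot, a direct calculation gives $\nabla g=\nabla_{\bm{X},\bm{Y}}f$ at $\theta=\|\mathcal{P}_{\Omega}(\bm{L}_{\mathrm{ncvx}}-\bm{M})\|_{\mathrm{F}}$, so $\bm{W}\bm{Y}=\nabla_{\bm{X}}g-\lambda\bm{X}$ and $\bm{W}^{\top}\bm{X}=\nabla_{\bm{Y}}g-\lambda\bm{Y}$. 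Combined with $\bm{U}\bm{U}^{\top}=\bm{X}(\bm{X}^{\top}\bm{X})^{-1}\bm{X}^{\top}$ and $\bm{V}\bm{V}^{\top}=\bm{Y}(\bm{Y}^{\top}\bm{Y})^{-1}\bm{Y}^{\top}$ (valid because $\mathrm{col}(\bm{X})=\mathrm{col}(\bm{U})$ and similarly for $\bm{Y}$), this furnishes closed-form expressions for $\bm{U}\bm{U}^{\top}\bm{W}$, $\bm{W}\bm{V}\bm{V}^{\top}$, and $\bm{U}\bm{U}^{\top}\bm{W}\bm{V}\bm{V}^{\top}$.

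For the tangent component, I would plug these into $\mathcal{P}_{T}(\bm{A})=\bm{U}\bm{U}^{\top}\bm{A}+\bm{A}\bm{V}\bm{V}^{\top}-\bm{U}\bm{U}^{\top}\bm{A}\bm{V}\bm{V}^{\top}$; after cancellations the result collapses to
\[
\mathcal{P}_{T}(\bm{W})+\lambda\bm{U}\bm{V}^{\top}=(\bm{I}-\bm{U}\bm{U}^{\top})\,\nabla_{\bm{X}}g\,(\bm{Y}^{\top}\bm{Y})^{-1}\bm{Y}^{\top}+\bm{X}(\bm{X}^{\top}\bm{X})^{-1}(\nabla_{\bm{Y}}g)^{\top}+\lambda\bigl[\bm{U}\bm{V}^{\top}-\bm{X}(\bm{X}^{\top}\bm{X})^{-1}\bm{Y}^{\top}\bigr].
\]
The first two pieces are controlled by $\|\nabla g\|_{\mathrm{F}}/\sqrt{\sigma_{\min}}$ using the singular-value bounds $\sigma_{\min}(\bm{X}),\sigma_{\min}(\bm{Y})\ge\sqrt{\sigma_{\min}/2}$ from Lemma~\ref{lemma:error-size}. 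The third piece is pure balance error: writing $\bm{X}=\bm{U}\bm{A}$ and $\bm{Y}=\bm{V}\bm{B}$ with $\bm{A}\bm{B}^{\top}=\bm{\Sigma}$, it equals $\lambda\bm{U}(\bm{I}-\bm{A}^{-\top}\bm{B}^{\top})\bm{V}^{\top}$, and the Frobenius balance \eqref{eq:ncvx-itr-bound-balance} together with $\bm{A}\bm{B}^{\top}=\bm{\Sigma}$ forces $\|\bm{A}-\bm{B}\|$ to be polynomially negligible (a standard polar-decomposition argument), making this term dwarfed by the $\|\nabla g\|_{\mathrm{F}}/\sqrt{\sigma_{\min}}$ piece. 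Rewriting everything in terms of $\bm{R}=-\bm{W}/\lambda-\bm{U}\bm{V}^{\top}$ yields the advertised Frobenius bound after tracking constants.

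For the normal component I would start from $\mathcal{P}_{T^{\perp}}(\bm{W})=\tfrac{1}{\theta}(\bm{I}-\bm{U}\bm{U}^{\top})\mathcal{P}_{\Omega}(\bm{L}_{\mathrm{ncvx}}-\bm{M})(\bm{I}-\bm{V}\bm{V}^{\top})$, split $\bm{L}_{\mathrm{ncvx}}-\bm{M}=(\bm{L}_{\mathrm{ncvx}}-\bm{L}^{\star})-\bm{E}$, and further decompose $\mathcal{P}_{\Omega}(\bm{L}_{\mathrm{ncvx}}-\bm{L}^{\star})=p(\bm{L}_{\mathrm{ncvx}}-\bm{L}^{\star})+[\mathcal{P}_{\Omega}-p\mathcal{I}](\bm{L}_{\mathrm{ncvx}}-\bm{L}^{\star})$. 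The $p\bm{L}_{\mathrm{ncvx}}$ contribution vanishes in $T^{\perp}$ since $\bm{L}_{\mathrm{ncvx}}\in T$, the residual $p(\bm{I}-\bm{U}\bm{U}^{\top})\bm{L}^{\star}(\bm{I}-\bm{V}\bm{V}^{\top})$ is of second order via Davis--Kahan applied to the spectral bound \eqref{eq:ncvx-iterates-bound-XY-L-spec}, the fluctuation term is bounded by $(\lambda/16)np^{1/2}\sigma$ through \eqref{eq:regularization-req-P}, and $\|\mathcal{P}_{\Omega}(\bm{E})\|\le(\lambda/16)np^{1/2}\sigma$ by Lemma~\ref{lemma:E-bound}. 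Dividing by $\theta\ge\tfrac{1}{2}np^{1/2}\sigma$ from Lemma~\ref{lemma:error-size} yields $\|\mathcal{P}_{T^{\perp}}(\bm{W})\|\le\lambda/2$, hence $\|\mathcal{P}_{T^{\perp}}(\bm{R})\|<1/2$.

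The main obstacle will be the tangent calculation: the normal part just threads together ready-made spectral bounds, but converting the Frobenius-norm balance into a sharp operator-norm control on $\bm{I}-\bm{A}^{-\top}\bm{B}^{\top}$, and then cleanly grouping terms so that the $70\kappa\sigma_{\min}^{-1/2}$ constant emerges after division by $\lambda$, is the step requiring the most careful bookkeeping.
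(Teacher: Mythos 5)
There are two genuine gaps, one in each half of your argument. For the tangent part, your reduction to
$\mathcal{P}_{T}(\bm{W})+\lambda\bm{U}\bm{V}^{\top}=(\bm{I}-\bm{U}\bm{U}^{\top})\nabla_{\bm{X}}g\,(\bm{Y}^{\top}\bm{Y})^{-1}\bm{Y}^{\top}+\bm{X}(\bm{X}^{\top}\bm{X})^{-1}(\nabla_{\bm{Y}}g)^{\top}+\lambda[\bm{U}\bm{V}^{\top}-\bm{X}(\bm{X}^{\top}\bm{X})^{-1}\bm{Y}^{\top}]$
is fine, but your plan for the last (imbalance) term does not work: you propose to control it through the iterate-balance hypothesis \eqref{eq:ncvx-itr-bound-balance} and claim it is ``dwarfed'' by the gradient pieces. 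The logic is backwards. The lemma demands a bound on $\|\mathcal{P}_{T}(\bm{R})\|_{\mathrm{F}}$ that is \emph{proportional to} $\|\nabla g(\bm{X},\bm{Y})\|_{\mathrm{F}}$, and at the output point the gradient is polynomially tiny, $\|\nabla g\|_{\mathrm{F}}\lesssim n^{-8}\sqrt{\sigma_{\max}/p}$ by \eqref{eq:small-grad}; in contrast, \eqref{eq:ncvx-itr-bound-balance} only guarantees $\|\bm{X}^{\top}\bm{X}-\bm{Y}^{\top}\bm{Y}\|_{\mathrm{F}}\lesssim\kappa\eta\frac{\sigma}{\sigma_{\min}}\sqrt{n/p}\sqrt{r}\sigma_{\max}^{2}$, a fixed quantity of order $\eta$ that is not $n^{-\mathrm{poly}}$ and is generically far \emph{larger} than $\kappa\sigma_{\min}^{-1/2}\|\nabla g\|_{\mathrm{F}}$. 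So this route cannot produce a residual bound proportional to the gradient (which is exactly what makes the Remark after Lemma~\ref{lemma:ncvx-to-cvx-full} true). The paper's fix is Lemma~\ref{lemma:XY-decomposition-Q}: the exact identity $\bm{X}^{\top}\bm{X}-\bm{Y}^{\top}\bm{Y}=\tfrac{1}{\lambda}(\bm{X}^{\top}\nabla_{\bm{X}}g-(\nabla_{\bm{Y}}g)^{\top}\bm{Y})$ shows the imbalance itself is controlled by $\|\nabla g\|_{\mathrm{F}}$, and then the factor representation $\bm{X}=\bm{U}\bm{\Sigma}^{1/2}\bm{Q}$, $\bm{Y}=\bm{V}\bm{\Sigma}^{1/2}\bm{Q}^{-\top}$ with $\|\bm{\Sigma}^{1/2}\bm{Q}\bm{Q}^{\top}\bm{\Sigma}^{-1/2}-\bm{I}_r\|\lesssim\kappa\sigma_{\min}^{-1/2}\|\nabla g\|_{\mathrm{F}}$ is what turns your third term into a gradient-sized quantity. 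Without that identity the tangent bound as stated is out of reach (also note that converting from $\bm{W}$ to $\bm{R}=-\bm{W}/\lambda-\bm{U}\bm{V}^{\top}$ puts a $1/\lambda$ on your gradient pieces, so ``tracking constants'' to land at $70\kappa\sigma_{\min}^{-1/2}$ is not automatic).

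For the normal part, your triangle-inequality decomposition leaves the term $\tfrac{p}{\theta}\|(\bm{I}-\bm{U}\bm{U}^{\top})\bm{L}^{\star}(\bm{I}-\bm{V}\bm{V}^{\top})\|$, and the second-order Davis--Kahan bound you invoke gives at best $\sigma_{\max}\|\sin\Theta(\bm{U},\bm{U}^{\star})\|\|\sin\Theta(\bm{V},\bm{V}^{\star})\|\lesssim\kappa^{3}\tfrac{\sigma^{2}n}{\sigma_{\min}p}$ via \eqref{eq:ncvx-iterates-bound-XY-L-spec}; after dividing by $\theta\asymp np^{1/2}\sigma$ this is of order $\kappa^{3}\tfrac{\sigma}{\sigma_{\min}}\sqrt{n/p}\cdot\lambda\lesssim\tfrac{\kappa}{\sqrt{\mu r\log n}}\,\lambda$ under the noise condition. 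That is not below $\lambda/2$ once $\kappa\gtrsim\sqrt{\mu r\log n}$, a regime the theorem's assumptions allow, so your argument only closes for bounded $\kappa$. The paper never bounds this projection at all: it defines $\tilde{\bm{R}}$ through \eqref{eq:def-R-tilde}, notes $\mathcal{P}_{T^{\perp}}(\bm{R})=\mathcal{P}_{T^{\perp}}(\tilde{\bm{R}})$, and runs a Weyl singular-value argument in which the rank-$r$ matrix $p\bm{L}^{\star}$ contributes nothing to singular values of index larger than $r$, while $\mathcal{P}_{\Omega}(\bm{E})$ and $\mathcal{P}_{\Omega}^{\mathrm{debias}}(\bm{X}\bm{Y}^{\top}-\bm{L}^{\star})$ are each at most $\tfrac{\lambda}{16}np^{1/2}\sigma$ in operator norm; since the rank-$r$ ``signal'' block on the other side has $r$-th singular value exceeding $\lambda/2$, the singular values of $\lambda\mathcal{P}_{T^{\perp}}(\tilde{\bm{R}})$ must all fall below $\lambda/2$. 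You would need to adopt this rank-based argument (or something equivalent) to get $\|\mathcal{P}_{T^{\perp}}(\bm{R})\|<1/2$ uniformly over the allowed $\kappa$.
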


\noindent  See Section \ref{subsec:Proof-Gradient-Decomp} for the
proof. 

\medskip

We decompose the proof into three steps. In Step 1, we show that the
difference matrix $\bm{\Delta}$ mainly lies in the tangent space
$T$. In Step 2, the previous fact is leveraged to show an upper bound
on $\mathcal{P}_{\Omega}(\bm{\Delta})$. In the last step (Step 3),
we connect the previous steps with the injectivity property (cf.~Lemma~\ref{lemma:injectivity})
to reach the desired conclusion. 

\paragraph{Step 1: showing that $\bm{\Delta}$ lies primarily in the tangent
space $T$.}

By the optimality of $\bm{L}_{\mathrm{cvx}}$, we have 
\begin{equation}
0\ge\|\mathcal{P}_{\Omega}(\bm{L}_{\mathrm{cvx}}-\bm{M})\|_{\mathrm{F}}-\|\mathcal{P}_{\Omega}(\bm{L}_{\mathrm{ncvx}}-\bm{M})\|_{\mathrm{F}}+\lambda\left(\|\bm{L}_{\mathrm{cvx}}\|_{*}-\|\bm{L}_{\mathrm{ncvx}}\|_{*}\right).\label{eq:optimality-Lcvx}
\end{equation}
 Use the convexity of $\|\cdot\|_{\mathrm{F}}$ and $\|\cdot\|_{*}$
and the decomposition $\bm{L}_{\mathrm{ncvx}}=\bm{U}\bm{\Sigma}\bm{V}^{\top}$
to see that 
\begin{align*}
0 & \ge\left\langle \frac{1}{\theta}\mathcal{P}(\bm{L}_{\mathrm{ncvx}}-\bm{M}),\bm{\Delta}\right\rangle +\lambda\left\langle \boldsymbol{U}\boldsymbol{V}^{\top}+\bm{W}_{0},\bm{\Delta}\right\rangle
\end{align*}
holds for any $\bm{W}_{0}\in T^{\perp}$ with $\|\bm{W}_{0}\|\leq1$.
Apply Lemma~\ref{lemma:residual-size-in-T} to further obtain
\[
0\geq-\lambda\left\langle \bm{R},\bm{\Delta}\right\rangle +\lambda\left\langle \bm{W}_{0},\bm{\Delta}\right\rangle .
\]
In particular, one can choose $\bm{W}_{0}\in T^{\perp}$ such that
$\|\mathcal{P}_{T^{\perp}}(\boldsymbol{\Delta})\|_{*}=\langle\bm{W}_{0},\bm{\Delta}\rangle$,
which yields the inequality 
\begin{align*}
0 & \geq\lambda\|\mathcal{P}_{T^{\perp}}(\boldsymbol{\Delta})\|_{*}-\lambda\left\langle \bm{R},\bm{\Delta}\right\rangle \\
 & =\lambda\|\mathcal{P}_{T^{\perp}}(\boldsymbol{\Delta})\|_{*}-\lambda\left\langle \mathcal{P}_{T}(\bm{R}),\bm{\Delta}\right\rangle -\lambda\left\langle \mathcal{P}_{T^{\perp}}(\bm{R}),\bm{\Delta}\right\rangle \\
 & \geq\lambda\|\mathcal{P}_{T^{\perp}}(\boldsymbol{\Delta})\|_{*}-\lambda\|\mathcal{P}_{T}(\bm{R})\|_{\mathrm{F}}\|\mathcal{P}_{T}(\boldsymbol{\Delta})\|_{\mathrm{F}}-\lambda\|\mathcal{P}_{T^{\perp}}(\bm{R})\|\|\mathcal{P}_{T^{\perp}}(\boldsymbol{\Delta})\|_{*}.
\end{align*}
Here the last line arises from Holder's inequality. 

Again, by Lemma~\ref{lemma:residual-size-in-T}, we have the bounds
$\|\mathcal{P}_{T}(\bm{R})\|_{\mathrm{F}}\le70\kappa\sigma_{\min}^{-1/2}\|\nabla g(\bm{X},\bm{Y})\|_{\mathrm{F}}$
and $\|\mathcal{P}_{T^{\perp}}(\bm{R})\|<1/2$, which allow us to
further arrive at 
\[
0\geq\frac{\lambda}{2}\|\mathcal{P}_{T^{\perp}}(\boldsymbol{\Delta})\|_{*}-70\lambda\kappa\sigma_{\min}^{-1/2}\|\nabla g(\bm{X},\bm{Y})\|_{\mathrm{F}}\|\mathcal{P}_{T}(\boldsymbol{\Delta})\|_{\mathrm{F}}.
\]
This further implies 

\begin{equation}
\|\mathcal{P}_{T^{\perp}}(\boldsymbol{\Delta})\|_{\mathrm{F}}\le\|\mathcal{P}_{T^{\perp}}(\boldsymbol{\Delta})\|_{*}\le140\kappa\sigma_{\min}^{-1/2}\|\nabla g(\bm{X},\bm{Y})\|_{\mathrm{F}}\|\mathcal{P}_{T}(\boldsymbol{\Delta})\|_{\mathrm{F}}.\label{eq:PTperp-Delta-PT-Delta}
\end{equation}
As an immediate consequence, under the assumed upper bound~\eqref{eq:gradient-size-bound}
for $\|\nabla g(\bm{X},\bm{Y})\|_{\mathrm{F}}$, we have \[{140\kappa\sigma_{\min}^{-1/2}\|\nabla g(\bm{X},\bm{Y})\|_{\mathrm{F}}\le1},\]
and hence 
\begin{equation}
\|\boldsymbol{\Delta}\|_{\mathrm{F}}\le\|\mathcal{P}_{T^{\perp}}(\boldsymbol{\Delta})\|_{\mathrm{F}}+\|\mathcal{P}_{T}(\boldsymbol{\Delta})\|_{\mathrm{F}}\le2\|\mathcal{P}_{T}(\boldsymbol{\Delta})\|_{\mathrm{F}}\label{eq:Delta-PTDelta}
\end{equation}

\paragraph{Step 2: bounding $\|\mathcal{P}_{\Omega}(\bm{\Delta})\|_{\mathrm{F}}^{2}$.}

We start with presenting an identity involving $\|\mathcal{P}_{\Omega}(\bm{\Delta})\|_{\mathrm{F}}^{2}$:
\begin{align}
\|\mathcal{P}_{\Omega}(\bm{\Delta})\|_{\mathrm{F}}^{2} & =\left(\|\mathcal{P}_{\Omega}(\bm{L}_{\mathrm{cvx}}-\boldsymbol{M})\|_{\mathrm{F}}-\|\mathcal{P}_{\Omega}(\bm{L}_{\mathrm{ncvx}}-\bm{M})\|_{\mathrm{F}}\right)\left(\|\mathcal{P}_{\Omega}(\bm{L}_{\mathrm{cvx}}-\boldsymbol{M})\|_{\mathrm{F}}+\|\mathcal{P}_{\Omega}(\bm{L}_{\mathrm{ncvx}}-\bm{M})\|_{\mathrm{F}}\right)\nonumber \\
 & \quad-2\langle\bm{\Delta},\mathcal{P}_{\Omega}(\bm{L}_{\mathrm{ncvx}}-\bm{M})\rangle\nonumber \\
 & =\left(\|\mathcal{P}_{\Omega}(\bm{L}_{\mathrm{cvx}}-\boldsymbol{M})\|_{\mathrm{F}}-\|\mathcal{P}_{\Omega}(\bm{L}_{\mathrm{ncvx}}-\bm{M})\|_{\mathrm{F}}\right)^{2}\label{eq:P-Omega-Delta-Square}\\
 & \quad+2\|\mathcal{P}_{\Omega}(\bm{L}_{\mathrm{ncvx}}-\bm{M})\|_{\mathrm{F}}\cdot\bigg(\|\mathcal{P}_{\Omega}(\bm{L}_{\mathrm{cvx}}-\boldsymbol{M})\|_{\mathrm{F}}-\|\mathcal{P}_{\Omega}(\bm{L}_{\mathrm{ncvx}}-\bm{M})\|_{\mathrm{F}}\nonumber\\
 &\qquad\qquad\qquad\qquad\qquad\qquad\qquad-\left\langle \frac{1}{\theta}\mathcal{P}_{\Omega}(\bm{L}_{\mathrm{ncvx}}-\bm{M}),\bm{\Delta}\right\rangle \bigg).\nonumber 
\end{align}
Lemma~\ref{lemma:residual-size-in-T} and Equation~\eqref{eq:optimality-Lcvx}
tell us that 
\begin{align*}
 & \|\mathcal{P}_{\Omega}(\bm{L}_{\mathrm{cvx}}-\boldsymbol{M})\|_{\mathrm{F}}-\|\mathcal{P}_{\Omega}(\bm{L}_{\mathrm{ncvx}}-\bm{M})\|_{\mathrm{F}}-\left\langle \frac{1}{\theta}\mathcal{P}_{\Omega}(\bm{L}_{\mathrm{ncvx}}-\bm{M}),\bm{\Delta}\right\rangle \\
 & \quad\leq\lambda\|\bm{L}_{\mathrm{ncvx}}\|_{*}-\lambda\|\bm{L}_{\mathrm{cvx}}\|_{*}+\lambda\left\langle \bm{U}\bm{V}^{\top}+\bm{R},\bm{\Delta}\right\rangle .
\end{align*}
By convexity of $\|\cdot\|_{*}$, this further simplifies to 
\begin{align}
 & \lambda\|\bm{L}_{\mathrm{ncvx}}\|_{*}-\lambda\|\bm{L}_{\mathrm{cvx}}\|_{*}+\lambda\left\langle \bm{U}\bm{V}^{\top}+\bm{R},\bm{\Delta}\right\rangle \nonumber \\
 & \quad\leq-\lambda\left\langle \bm{U}\bm{V}^{\top}+\bm{W},\bm{\Delta}\right\rangle +\lambda\left\langle \bm{U}\bm{V}^{\top}+\bm{R},\bm{\Delta}\right\rangle \nonumber \\
 & \quad=\lambda\langle\bm{\Delta},\bm{R}-\bm{W}\rangle,\label{eq:Delta-R-W}
\end{align}
for any $\bm{W}\in T^{\perp}$ with $\|\bm{W}\|\le1$. Combine Equation~\eqref{eq:P-Omega-Delta-Square}
and \eqref{eq:Delta-R-W} to reach 
\begin{align*}
\|\mathcal{P}_{\Omega}(\bm{\Delta})\|_{\mathrm{F}}^{2} & \leq\underbrace{\left(\|\mathcal{P}_{\Omega}(\bm{L}_{\mathrm{cvx}}-\boldsymbol{M})\|_{\mathrm{F}}-\|\mathcal{P}_{\Omega}(\bm{L}_{\mathrm{ncvx}}-\bm{M})\|_{\mathrm{F}}\right)^{2}}_{\eqqcolon\alpha_{1}}+\underbrace{2\lambda\|\mathcal{P}_{\Omega}(\bm{L}_{\mathrm{ncvx}}-\bm{M})\|_{\mathrm{F}}\left|\langle\bm{\Delta},\bm{R}-\bm{W}\rangle\right|}_{\eqqcolon\alpha_{2}}.
\end{align*}
We prove in the end of this section that the two terms $\alpha_{1}$
and $\alpha_{2}$ obey
\begin{subequations}
\begin{align}
\alpha_{1} & \leq\lambda^{2}(\sqrt{r}+140\kappa\sigma_{\min}^{-1/2}\|\nabla g(\bm{X},\bm{Y})\|_{\mathrm{F}})^{2}\|\mathcal{P}_{T}(\bm{\Delta})\|_{\mathrm{F}}^{2};\label{eq:Delta-proof-alpha1}\\
\alpha_{2} & \leq560\lambda\kappa\sigma_{\min}^{-1/2}\theta\|\nabla g(\bm{X},\bm{Y})\|_{\mathrm{F}}\|\mathcal{P}_{T}(\bm{\Delta})\|_{\mathrm{F}},\label{eq:Delta-proof-alpha2}
\end{align}
\end{subequations}which yields the upper bound on $\|\mathcal{P}_{\Omega}(\bm{\Delta})\|_{\mathrm{F}}^{2}$
in terms of $\|\mathcal{P}_{T}(\bm{\Delta})\|_{\mathrm{F}}$: 
\begin{align*}
\|\mathcal{P}_{\Omega}(\bm{\Delta})\|_{\mathrm{F}}^{2} & \leq\lambda^{2}(\sqrt{r}+140\kappa\sigma_{\min}^{-1/2}\|\nabla g(\bm{X},\bm{Y})\|_{\mathrm{F}})^{2}\|\mathcal{P}_{T}(\bm{\Delta})\|_{\mathrm{F}}^{2}\\
 & \quad+560\lambda\kappa\sigma_{\min}^{-1/2}\theta\|\nabla g(\bm{X},\bm{Y})\|_{\mathrm{F}}\|\mathcal{P}_{T}(\bm{\Delta})\|_{\mathrm{F}}.
\end{align*}

\paragraph{Step 3: final calculations.}

Using the decomposition $\mathcal{P}_{\Omega}(\bm{\Delta})=\mathcal{P}_{\Omega}\mathcal{P}_{T}(\bm{\Delta})+\mathcal{P}_{\Omega}\mathcal{P}_{T^{\perp}}(\bm{\Delta})$,
we obtain 
\begin{align*}
\|\mathcal{P}_{\Omega}(\bm{\Delta})\|_{\mathrm{F}} & =\|\mathcal{P}_{\Omega}\mathcal{P}_{T}(\bm{\Delta})+\mathcal{P}_{\Omega}\mathcal{P}_{T^{\perp}}(\bm{\Delta})\|_{\mathrm{F}}\\
 & \ge\|\mathcal{P}_{\Omega}\mathcal{P}_{T}(\bm{\Delta})\|_{\mathrm{F}}-\|\mathcal{P}_{\Omega}\mathcal{P}_{T^{\perp}}(\bm{\Delta})\|_{\mathrm{F}}.
\end{align*}
Together with Lemma~\ref{lemma:injectivity} and Equation~\ref{eq:PTperp-Delta-PT-Delta},
we have
\begin{align*}
\|\mathcal{P}_{\Omega}(\bm{\Delta})\|_{\mathrm{F}} & \ge(\sqrt{p}C_{\mathrm{inj}}-140\kappa\sigma_{\min}^{-1/2}\|\nabla g(\bm{X},\bm{Y})\|_{\mathrm{F}})\|\mathcal{P}_{T}(\bm{\Delta})\|_{\mathrm{F}}\\
 & \geq\frac{\sqrt{p}}{2}C_{\mathrm{inj}}\|\mathcal{P}_{T}(\bm{\Delta})\|_{\mathrm{F}}.
\end{align*}
where the last line uses \eqref{eq:gradient-size-bound}. As a result,
we arrive at the sandwhich formula 
\begin{align*}
\frac{1}{4}pC_{\mathrm{inj}}^{2}\|\mathcal{P}_{T}(\bm{\Delta})\|_{\mathrm{F}}^{2} & \le\|\mathcal{P}_{\Omega}(\bm{\Delta})\|_{\mathrm{F}}^{2}\\
 & \le\lambda^{2}(\sqrt{r}+140\kappa\sigma_{\min}^{-1/2}\|\nabla g(\bm{X},\bm{Y})\|_{\mathrm{F}})^{2}\|\mathcal{P}_{T}(\bm{\Delta})\|_{\mathrm{F}}^{2}\\
 & \quad+560\lambda\kappa\sigma_{\min}^{-1/2}\theta\|\nabla g(\bm{X},\bm{Y})\|_{\mathrm{F}}\|\mathcal{P}_{T}(\bm{\Delta})\|_{\mathrm{F}},
\end{align*}
which further implies 
\begin{align*}
 & \left\{ \frac{pC_{\mathrm{inj}}^{2}}{4}-\lambda^{2}(\sqrt{r}+140\kappa\sigma_{\min}^{-1/2}\|\nabla g(\bm{X},\bm{Y})\|_{\mathrm{F}})^{2}\right\} \|\mathcal{P}_{T}(\bm{\Delta})\|_{\mathrm{F}}^{2}\\
 & \quad\leq560\lambda\kappa\sigma_{\min}^{-1/2}\theta\|\nabla g(\bm{X},\bm{Y})\|_{\mathrm{F}}\|\mathcal{P}_{T}(\bm{\Delta})\|_{\mathrm{F}}.
\end{align*}
Reorganize and substitute in \eqref{eq:gradient-size-bound} to see
that for large enough $n$,
\[
\frac{pC_{\mathrm{inj}}^{2}}{4}-\lambda^{2}(\sqrt{r}+140\kappa\sigma_{\min}^{-1/2}\|\nabla g(\bm{X},\bm{Y})\|_{\mathrm{F}})^{2}\geq\frac{pC_{\mathrm{inj}}^{2}}{8}.
\]
Combine the above two relations to reach 
\[
\frac{pC_{\mathrm{inj}}^{2}}{8}\|\mathcal{P}_{T}(\bm{\Delta})\|_{\mathrm{F}}^{2}\le560\lambda\kappa\sigma_{\min}^{-1/2}\theta\|\nabla g(\bm{X},\bm{Y})\|_{\mathrm{F}}\|\mathcal{P}_{T}(\bm{\Delta})\|_{\mathrm{F}},
\]
which together with $C_{\mathrm{inj}}=(32\kappa)^{-1/2}$ and \eqref{eq:theta-bound-Lncvx}
implies 
\[
\|\mathcal{P}_{T}(\bm{\Delta})\|_{\mathrm{F}}\lesssim\frac{\lambda\kappa^{2}}{\sqrt{p\sigma_{\min}}}n\sigma\|\nabla g(\bm{X},\bm{Y})\|_{\mathrm{F}}.
\]
Use \eqref{eq:Delta-PTDelta}, we obtain the bound on $\|\bm{\Delta}\|_{\mathrm{F}}$,

\[
\|\bm{\Delta}\|_{\mathrm{F}}\le2\|\mathcal{P}_{T}(\bm{\Delta})\|_{\mathrm{F}}\lesssim\frac{\lambda\kappa^{2}}{\sqrt{p\sigma_{\min}}}n\sigma\|\nabla g(\bm{X},\bm{Y})\|_{\mathrm{F}}.
\]

\paragraph{Proof of the bound~\eqref{eq:Delta-proof-alpha1}.}

For $\alpha_{1}$ we consider the cases when $\|\mathcal{P}_{\Omega}(\bm{L}_{\mathrm{cvx}}-\bm{M})\|_{\mathrm{F}}-\|\mathcal{P}_{\Omega}(\bm{L}_{\mathrm{ncvx}}-\bm{M})\|_{\mathrm{F}}$
is positive and non-positive separately. 

\paragraph{Case of $\|\mathcal{P}_{\Omega}(\bm{L}_{\mathrm{cvx}}-\bm{M})\|_{\mathrm{F}}-\|\mathcal{P}_{\Omega}(\bm{L}_{\mathrm{ncvx}}-\bm{M})\|_{\mathrm{F}}\le0$.}

By convexity of $\|\cdot\|_{\mathrm{F}}$,
\[
0\ge\|\mathcal{P}_{\Omega}(\bm{L}_{\mathrm{cvx}}-\bm{M})\|_{\mathrm{F}}-\|\mathcal{P}_{\Omega}(\bm{L}_{\mathrm{ncvx}}-\bm{M})\|_{\mathrm{F}}>\left\langle \frac{1}{\theta}\mathcal{P}(\bm{L}_{\mathrm{ncvx}}-\bm{M}),\bm{\Delta}\right\rangle .
\]
Using the representation in Lemma~\ref{lemma:residual-size-in-T},
the last term can be writen as $\lambda\left\langle \bm{U}\bm{V}^{\top}+\bm{R},\bm{\Delta}\right\rangle $.
Splitting the parts into $T$ and $T^{\perp}$, we have
\begin{align*}
 & \left(\|\mathcal{P}_{\Omega}(\bm{L}_{\mathrm{cvx}}-\bm{M})\|_{\mathrm{F}}-\|\mathcal{P}_{\Omega}(\bm{L}_{\mathrm{ncvx}}-\bm{M})\|_{\mathrm{F}}\right)^{2}\\
 & \quad\le\lambda^{2}\left\langle \bm{U}\bm{V}^{\top}+\bm{R},\bm{\Delta}\right\rangle ^{2}\\
 & \quad\le\lambda^{2}\left(\|\bm{U}\bm{V}^{\top}\|_{\mathrm{F}}\|\mathcal{P}_{T}(\bm{\Delta})\|_{\mathrm{F}}+\|\mathcal{P}_{T}(\bm{R})\|_{\mathrm{F}}\|\mathcal{P}_{T}(\bm{\Delta})\|_{\mathrm{F}}+\|\mathcal{P}_{T^{\perp}}(\bm{R})\|\|\mathcal{P}_{T^{\perp}}(\bm{\Delta})\|_{*}\right)^{2}\text{.}
\end{align*}
Together with Equation \eqref{eq:PTperp-Delta-PT-Delta} and Lemma~\ref{lemma:residual-size-in-T},
we arrive at
\[
\left(\|\mathcal{P}_{\Omega}(\bm{L}_{\mathrm{cvx}}-\bm{M})\|_{\mathrm{F}}-\|\mathcal{P}_{\Omega}(\bm{L}_{\mathrm{ncvx}}-\bm{M})\|_{\mathrm{F}}\right)^{2}\le\lambda^{2}(\sqrt{r}+140\kappa\sigma_{\min}^{-1/2}\|\nabla g(\bm{X},\bm{Y})\|_{\mathrm{F}})^{2}\|\mathcal{P}_{T}(\bm{\Delta})\|_{\mathrm{F}}^{2}.
\]

\paragraph{Case of $\|\mathcal{P}_{\Omega}(\bm{L}_{\mathrm{cvx}}-\bm{M})\|_{\mathrm{F}}-\|\mathcal{P}_{\Omega}(\bm{L}_{\mathrm{ncvx}}-\bm{M})\|_{\mathrm{F}}>0$.}

By optimality of $\bm{L}_{\mathrm{cvx}}$ and convexity of $\|\cdot\|_{\star}$,
\begin{align*}
0 & <\|\mathcal{P}_{\Omega}(\bm{L}_{\mathrm{cvx}}-\bm{M})\|_{\mathrm{F}}-\|\mathcal{P}_{\Omega}(\bm{L}_{\mathrm{ncvx}}-\bm{M})\|_{\mathrm{F}}\le-\lambda\left(\|\bm{L}_{\mathrm{cvx}}\|_{*}-\|\bm{L}_{\mathrm{ncvx}}\|_{*}\right)\le-\lambda\left\langle \bm{U}\bm{V}^{\top},\bm{\Delta}\right\rangle .
\end{align*}
Then similar to the case of $\|\mathcal{P}_{\Omega}(\bm{L}_{\mathrm{cvx}}-\bm{M})\|_{\mathrm{F}}-\|\mathcal{P}_{\Omega}(\bm{L}_{\mathrm{ncvx}}-\bm{M})\|_{\mathrm{F}}\le0$,
\begin{align*}
 & \left(\|\mathcal{P}_{\Omega}(\bm{L}_{\mathrm{cvx}}-\bm{M})\|_{\mathrm{F}}-\|\mathcal{P}_{\Omega}(\bm{L}_{\mathrm{ncvx}}-\bm{M})\|_{\mathrm{F}}\right)^{2}\le\lambda^{2}r\|\mathcal{P}_{T}(\bm{\Delta})\|_{\mathrm{F}}^{2}.
\end{align*}
Combining the two cases yields \eqref{eq:Delta-proof-alpha1}.

\paragraph{Proof of the bound~\eqref{eq:Delta-proof-alpha2}.}

For $\alpha_{2}$, we can split the parts into $T$ and $T^{\perp}$
similar to the proof for \eqref{eq:Delta-proof-alpha1}. Using Equation~\eqref{eq:PTperp-Delta-PT-Delta}
and Lemma~\ref{lemma:residual-size-in-T}, we have
\begin{align}
2\theta\cdot\lambda\langle\bm{\Delta},\bm{R}-\bm{W}\rangle & \le2\lambda\theta\left(\left|\langle\bm{\Delta},\bm{R}\rangle\right|+\left|\langle\bm{\Delta},\bm{W}\rangle\right|\right)\label{eq:Delta-Omega-to-T-ub-inner-product-part}\\
 & \le2\lambda\theta\left[\|\mathcal{P}_{T}(\bm{R})\|_{\mathrm{F}}\|\mathcal{P}_{T}(\bm{\Delta})\|_{\mathrm{F}}+\left(\|\mathcal{P}_{T^{\perp}}(\bm{R})\|+\|\mathcal{P}_{T^{\perp}}(\boldsymbol{W})\|\right)\|\mathcal{P}_{T^{\perp}}(\bm{\Delta})\|_{*}\right]\nonumber \\
 & \le560\lambda\kappa\sigma_{\min}^{-1/2}\theta\|\nabla g(\bm{X},\bm{Y})\|_{\mathrm{F}}\|\mathcal{P}_{T}(\bm{\Delta})\|_{\mathrm{F}}.\nonumber 
\end{align}

\subsection{Proof of Lemma \ref{lemma:residual-size-in-T} \label{subsec:Proof-Gradient-Decomp}}

The proof relies on the following representation of the low-rank factors
$\bm{X},\bm{Y}$ of the nonconvex solution $\bm{L}_{\mathrm{ncvx}}$. 

\begin{lemma}\label{lemma:XY-decomposition-Q}

Under the assumptions and notations of Lemma \ref{lemma:ncvx-to-cvx-full},
there exists an invertible matrix $\bm{Q}\in\mathbb{R}^{r\times r}$
such that $\bm{X}=\bm{U}\bm{\Sigma}^{1/2}\bm{Q},\bm{Y}=\bm{V}\bm{\Sigma}^{1/2}\bm{Q}^{-\top}$,
$\|\bm{Q}\|\le2$ and 
\begin{equation}
\left\Vert \bm{\Sigma}^{1/2}\bm{Q}\bm{Q}^{\top}\bm{\Sigma}^{-1/2}-\bm{I}_{r}\right\Vert \le\frac{32\kappa}{\sqrt{\sigma_{\min}}}\|\nabla g(\bm{X},\bm{Y})\|_{\mathrm{F}}\le1/3.\label{eq:QQT-I}
\end{equation}
where $\bm{U}_{\bm{Q}}\bm{\Sigma}_{\bm{Q}}\bm{V}_{\bm{Q}}$ is the
SVD of $\bm{Q}$.

\end{lemma}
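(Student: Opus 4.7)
The plan is to construct $\bm{Q}$ explicitly from the column structure of $\bm{X}$, extract the approximate balance of $\bm{X}$ and $\bm{Y}$ from the smallness of $\nabla g$, and convert this balance into the target bound via a Sylvester-type inversion.

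\textbf{Existence of $\bm{Q}$.} By Lemma~\ref{lemma:error-size} both $\bm{X}=\bm{X}_{\mathrm{ncvx}}$ and $\bm{Y}=\bm{Y}_{\mathrm{ncvx}}$ have full column rank $r$. Combined with $\bm{X}\bm{Y}^{\top}=\bm{U}\bm{\Sigma}\bm{V}^{\top}$, this forces $\mathrm{col}(\bm{X})=\mathrm{col}(\bm{U})$, so $\bm{X}=\bm{U}\bm{R}$ with $\bm{R}=\bm{U}^{\top}\bm{X}$ invertible; setting $\bm{Q}:=\bm{\Sigma}^{-1/2}\bm{R}$ yields $\bm{X}=\bm{U}\bm{\Sigma}^{1/2}\bm{Q}$, and $\bm{Y}=\bm{V}\bm{\Sigma}^{1/2}\bm{Q}^{-\top}$ is then forced by $\bm{X}\bm{Y}^{\top}=\bm{U}\bm{\Sigma}\bm{V}^{\top}$. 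A crude estimate $\|\bm{Q}\|\le \|\bm{\Sigma}^{-1/2}\|\cdot\|\bm{X}\|\lesssim \sqrt{\kappa}$ is available right away from Lemma~\ref{lemma:error-size} and Weyl's inequality applied to $\bm{L}_{\mathrm{ncvx}}$ (to lower bound $\sigma_{\min}(\bm{\Sigma})$).

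\textbf{Balance identity and Sylvester inversion.} A direct computation using $\nabla_{\bm{X}}g = \theta^{-1}\mathcal{P}_{\Omega}(\bm{X}\bm{Y}^{\top}-\bm{M})\bm{Y}+\lambda\bm{X}$ and its analogue for $\nabla_{\bm{Y}}g$ shows that the cross term $\theta^{-1}\bm{X}^{\top}\mathcal{P}_{\Omega}(\bm{X}\bm{Y}^{\top}-\bm{M})\bm{Y}$ cancels, leaving
\[
\lambda(\bm{X}^{\top}\bm{X}-\bm{Y}^{\top}\bm{Y})\;=\;\bm{X}^{\top}\nabla_{\bm{X}}g-(\nabla_{\bm{Y}}g)^{\top}\bm{Y},
\]
which together with $\max\{\|\bm{X}\|,\|\bm{Y}\|\}\le\sqrt{2\sigma_{\max}}$ controls $\|\bm{X}^{\top}\bm{X}-\bm{Y}^{\top}\bm{Y}\|_{\mathrm{F}}$ by $\|\nabla g\|_{\mathrm{F}}$. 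Substituting the explicit forms $\bm{X}^{\top}\bm{X}=\bm{Q}^{\top}\bm{\Sigma}\bm{Q}$ and $\bm{Y}^{\top}\bm{Y}=\bm{Q}^{-1}\bm{\Sigma}\bm{Q}^{-\top}$ and conjugating by $\bm{Q}$ then turns this into a Sylvester-type equation for $\bm{K}:=\bm{Q}\bm{Q}^{\top}$,
\[
(\bm{K}-\bm{I})\bm{\Sigma}\bm{K}+\bm{\Sigma}(\bm{K}-\bm{I})\;=\;\bm{Q}(\bm{X}^{\top}\bm{X}-\bm{Y}^{\top}\bm{Y})\bm{Q}^{\top},
\]
in which the quadratic piece $(\bm{K}-\bm{I})\bm{\Sigma}(\bm{K}-\bm{I})$ can be absorbed as a perturbation once $\|\bm{K}-\bm{I}\|$ is shown to be small. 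Since $\bm{\Sigma}$ is positive definite with $\sigma_{\min}(\bm{\Sigma})\gtrsim \sigma_{\min}$, inverting the Sylvester operator $\bm{S}\mapsto \bm{S}\bm{\Sigma}+\bm{\Sigma}\bm{S}$ controls $\|\bm{K}-\bm{I}\|$, and the identity $\bm{\Sigma}^{1/2}\bm{K}\bm{\Sigma}^{-1/2}-\bm{I}=\bm{\Sigma}^{1/2}(\bm{K}-\bm{I})\bm{\Sigma}^{-1/2}$ costs one additional $\sqrt{\kappa}$ factor to reach the norm in the lemma. The advertised $\|\bm{Q}\|\le 2$ follows by bootstrapping: once the main inequality is certified to be $\le 1/3$, one has $\|\bm{K}\|\le 4/3$, and hence $\|\bm{Q}\|=\|\bm{K}\|^{1/2}\le 2$.

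\textbf{Main obstacle.} The delicate point is tracking constants. Executed naively, the balance identity contributes a factor of $1/\lambda$ in front of $\|\nabla g\|_{\mathrm{F}}$, which does not appear in the target prefactor $32\kappa/\sqrt{\sigma_{\min}}$. Closing this gap is where I expect the real work to be: one either has to exploit the polynomial smallness of $\|\nabla g\|_{\mathrm{F}}$ from~\eqref{eq:small-grad} to absorb the $1/\lambda$ into the ambient constant, or leverage the square-root structure of the loss (whose gradient carries an intrinsic $1/\theta$ with $\theta\asymp n\sqrt{p}\sigma$) to produce a cancellation that yields the cleaner scaling advertised in the lemma. Given the Sylvester framework above, the remaining bookkeeping — and propagating the bound back to $\|\bm{Q}\|\le 2$ — should be essentially routine.
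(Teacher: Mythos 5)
Your set-up is sound up to a point: the explicit construction of $\bm{Q}$ from $\mathrm{col}(\bm{X})=\mathrm{col}(\bm{U})$ is valid, and the balance identity $\lambda(\bm{X}^{\top}\bm{X}-\bm{Y}^{\top}\bm{Y})=\bm{X}^{\top}\nabla_{\bm{X}}g-(\nabla_{\bm{Y}}g)^{\top}\bm{Y}$ together with $\max\{\|\bm{X}\|,\|\bm{Y}\|\}\le\sqrt{2\sigma_{\max}}$ is exactly the first step of the paper's argument. But from there the proposal has a genuine gap, in fact two. First, the Sylvester route is circular as written: to absorb the quadratic term $(\bm{K}-\bm{I})\bm{\Sigma}(\bm{K}-\bm{I})$ into the inverted operator $\bm{S}\mapsto\bm{S}\bm{\Sigma}+\bm{\Sigma}\bm{S}$ (whose Frobenius lower bound is $2\sigma_{\min}(\bm{\Sigma})$) you need $\|\bm{K}-\bm{I}\|\,\|\bm{\Sigma}\|\lesssim\sigma_{\min}(\bm{\Sigma})$, i.e.\ $\|\bm{K}-\bm{I}\|\lesssim 1/\kappa$ \emph{a priori}; but your only available crude bound is $\|\bm{Q}\|\lesssim\sqrt{\kappa}$, hence $\|\bm{K}-\bm{I}\|\lesssim\kappa$, and your ``bootstrapping'' appeals to the very inequality you are trying to prove (a continuity/homotopy argument, or a different non-perturbative entry point, would be needed and is not supplied). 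Second, you explicitly leave open the handling of the $1/\lambda$ factor produced by the balance identity, calling it ``where the real work is''; a proof proposal that defers the step determining whether the stated prefactor and the numerical bounds ($\le 1/3$, $\|\bm{Q}\|\le 2$) hold is not a proof. Note also that even if the Sylvester inversion went through, the conjugation by $\bm{\Sigma}^{\pm 1/2}$ and the factor $\|\bm{Q}\|^{2}$ keep the $1/\lambda$ in the bound, so the gap is not an artifact of your bookkeeping being unfinished: it must be closed by using the polynomial smallness of the gradient, $\|\nabla g(\bm{X},\bm{Y})\|_{\mathrm{F}}\lesssim n^{-8}\sqrt{\sigma_{\max}/p}$ from \eqref{eq:small-grad}, together with $\lambda\asymp 1/\sqrt{n}$, and you never carry this out.

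For comparison, the paper does not invert a Sylvester operator at all. After the same balance identity, it invokes Lemma~20 of \cite{chen2020noisy}, which simultaneously delivers the factorization $\bm{X}=\bm{U}\bm{\Sigma}^{1/2}\bm{Q}$, $\bm{Y}=\bm{V}\bm{\Sigma}^{1/2}\bm{Q}^{-\top}$ and the non-perturbative bound $\|\bm{\Sigma}_{\bm{Q}}-\bm{\Sigma}_{\bm{Q}}^{-1}\|_{\mathrm{F}}\le\frac{2}{\sigma_{\min}}\|\bm{X}^{\top}\bm{X}-\bm{Y}^{\top}\bm{Y}\|_{\mathrm{F}}$ on the singular values of $\bm{Q}$, with no smallness of $\bm{Q}\bm{Q}^{\top}-\bm{I}$ assumed in advance; the polynomially small gradient then gives $\sigma_{\max}(\bm{\Sigma}_{\bm{Q}})-\sigma_{\max}^{-1}(\bm{\Sigma}_{\bm{Q}})\le 1$, hence $\|\bm{Q}\|\le 2$, and writing $\bm{Q}\bm{Q}^{\top}-\bm{I}=\bm{U}_{\bm{Q}}\bm{\Sigma}_{\bm{Q}}(\bm{\Sigma}_{\bm{Q}}-\bm{\Sigma}_{\bm{Q}}^{-1})\bm{U}_{\bm{Q}}^{\top}$ and conjugating by $\bm{\Sigma}^{\pm1/2}$ yields \eqref{eq:QQT-I}. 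This is precisely the device that sidesteps both of your difficulties, and some substitute for it (or a completed continuity argument plus the explicit use of \eqref{eq:small-grad}) is required before your outline becomes a proof.
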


\noindent See Section~\ref{subsec:Proof-XY-decomp} for the proof.

\medskip

Denote the partial gradients of $g(\bm{X},\bm{Y})$ as $\bm{B}_{1},\bm{B}_{2}$,
i.e.,
\begin{align}
\bm{B}_{1} & \coloneqq\nabla_{\bm{X}}g(\bm{X},\bm{Y})=\frac{1}{\theta}\mathcal{P}_{\Omega}(\bm{X}\bm{Y}^{\top}-\bm{M})\bm{Y}+\lambda\bm{X};\label{eq:f-gradients-B1}\\
\bm{B}_{2} & \coloneqq\nabla_{\bm{Y}}g(\bm{X},\bm{Y})=\frac{1}{\theta}\mathcal{P}_{\Omega}(\bm{X}\bm{Y}^{\top}-\bm{M})^{\top}\bm{X}+\lambda\bm{Y},\label{eq:f-gradient-B2}
\end{align}
where we recall $\theta=\|\mathcal{P}_{\Omega}(\bm{X}\bm{Y}^{\top}-\bm{M})\|_{\mathrm{F}}$.
By definition, we know that $\max\left\{ \|\bm{B}_{1}\|_{\mathrm{F}},\|\bm{B}_{2}\|_{\mathrm{F}}\right\} \le\|\nabla g(\bm{X},\bm{Y})\|_{\mathrm{F}}$. 

Let $\bm{R}$ be the matrix that is defined by equation~\eqref{eq:f-gradient-residual}.
We now control its component in $T$ and $T^{\perp}$ separately.

\paragraph{Part 1: Bounding $\|\mathcal{P}_{T}(\bm{R})\|_{\mathrm{F}}$. }

By the definition of the projection operator $\mathcal{P}_{T}$, we
have
\begin{align*}
\|\mathcal{P}_{T}(\bm{R})\|_{\mathrm{F}} & =\|\bm{U}\bm{U}^{\top}\bm{R}(\bm{I}-\bm{V}\bm{V}^{\top})+\boldsymbol{RVV}^{\top}\|_{\mathrm{F}}\\
 & \le\|\bm{U}\bm{U}^{\top}\bm{R}(\bm{I}-\bm{V}\bm{V}^{\top})\|_{\mathrm{F}}+\|\boldsymbol{RVV}^{\top}\|_{\mathrm{F}}\\
 & \le\|\bm{U}^{\top}\bm{R}\|_{\mathrm{F}}+\|\bm{RV}\|_{\mathrm{F}}.
\end{align*}
For the term $\bm{R}\bm{V}$, we use the definitions of $\bm{B}_{1}$
and $\bm{R}$ to see that 
\[
\lambda\bm{U}\bm{V}^{\top}\bm{Y}+\lambda\bm{R}\bm{Y}=\lambda\bm{X}-\bm{B}_{1},
\]
which together with the representations in Lemma~\ref{lemma:XY-decomposition-Q}
implies
\[
\bm{R}\bm{V}=\bm{U}\bm{\Sigma}^{1/2}(\bm{Q}\bm{Q}^{\top}-\bm{I}_{r})\bm{\Sigma}^{-1/2}-\bm{B}_{1}\bm{Q}^{\top}\bm{\Sigma}^{-1/2}.
\]
In view of the relation~\eqref{eq:QQT-I}, we have
\begin{align*}
\|\bm{R}\bm{V}\|_{\mathrm{F}} & \le\|\bm{\Sigma}^{1/2}(\bm{Q}\bm{Q}^{\top}-\bm{I}_{r})\bm{\Sigma}^{-1/2}\|_{\mathrm{F}}+\|\bm{\Sigma}^{-1/2}\|\|\bm{Q}\|\|\bm{B}_{1}\|_{\mathrm{F}}\\
 & \le\frac{32\kappa}{\sqrt{\sigma_{\min}}}\|\nabla g(\bm{X},\bm{Y})\|_{\mathrm{F}}+2\sqrt{\frac{2}{\sigma_{\min}}}\|\nabla g(\bm{X},\bm{Y})\|_{\mathrm{F}}\\
 & \le\frac{35\kappa}{\sqrt{\sigma_{\min}}}\|\nabla g(\bm{X},\bm{Y})\|_{\mathrm{F}},
\end{align*}
where we have used the fact that $\|\bm{\Sigma}^{-1}\|\leq\sigma_{\min}/2$.
Similarly we can establish that \[\|\bm{U}^{\top}\bm{R}\|_{\mathrm{F}}\le\frac{35\kappa}{\sqrt{\sigma_{\min}}}\|\nabla g(\bm{X},\bm{Y})\|_{\mathrm{F}}.\]
Combine the two inequalities to arrive at 
\[
\|\mathcal{P}_{T}(\bm{R})\|_{\mathrm{F}}\le\frac{70\kappa}{\sqrt{\sigma_{\min}}}\|\nabla g(\bm{X},\bm{Y})\|_{\mathrm{F}}.
\]

\paragraph{Part 2: Bounding $\|\mathcal{P}_{T^{\perp}}(\bm{R})\|$.}

For any matrix $\bm{A}$, define $\mathcal{P}_{\Omega}^{\mathrm{debias}}(\bm{A})\coloneqq\mathcal{P}_{\Omega}(\bm{A})-p\bm{A}$.
We can rewrite the identities~\eqref{eq:f-gradients-B1} and \eqref{eq:f-gradient-B2}
as 
\begin{align*}
\frac{1}{\theta}\left[p\bm{L}^{\star}+\mathcal{P}_{\Omega}(\bm{E})-\mathcal{P}_{\Omega}^{\mathrm{debias}}(\bm{X}\bm{Y}^{\top}-\bm{L}^{\star})\right]\bm{Y} & =\frac{p}{\theta}\bm{X}\bm{Y}^{\top}\bm{Y}+\lambda\bm{X}-\bm{B}_{1};\\
\frac{1}{\theta}\left[p\bm{L}^{\star}+\mathcal{P}_{\Omega}(\bm{E})-\mathcal{P}_{\Omega}^{\mathrm{debias}}(\bm{X}\bm{Y}^{\top}-\bm{L}^{\star})\right]^{\top}\bm{X} & =\frac{p}{\theta}\bm{Y}\bm{X}^{\top}\bm{X}+\lambda\bm{Y}-\bm{B}_{2}.
\end{align*}
Again, using the representations in Lemma~\ref{lemma:XY-decomposition-Q},
we have the following two identities
\begin{subequations}
\begin{align}
\frac{1}{\theta}\left[p\bm{L}^{\star}+\mathcal{P}_{\Omega}(\bm{E})-\mathcal{P}_{\Omega}^{\mathrm{debias}}(\bm{X}\bm{Y}^{\top}-\bm{L}^{\star})\right]\bm{V} & =\frac{1}{\theta}p\bm{U}\bm{\Sigma}+\lambda\bm{U}\bm{\Sigma}^{1/2}\bm{Q}\bm{Q}^{\top}\bm{\Sigma}^{-1/2}-\bm{B}_{1}\bm{Q}^{\top}\bm{\Sigma}^{-1/2};\label{eq:bounding-tildeR-V}\\
\frac{1}{\theta}\left[p\bm{L}^{\star}+\mathcal{P}_{\Omega}(\bm{E})-\mathcal{P}_{\Omega}^{\mathrm{debias}}(\bm{X}\bm{Y}^{\top}-\bm{L}^{\star})\right]^{\top}\bm{U} & =\frac{1}{\theta}p\bm{V}\bm{\Sigma}+\lambda\bm{V}\bm{\Sigma}^{1/2}\bm{Q}^{-\top}\bm{Q}^{-1}\bm{\Sigma}^{-1/2}-\bm{B}_{2}\bm{Q}^{-1}\bm{\Sigma}^{-1/2}.\label{eq:bounding-tildeR-U}
\end{align}
\end{subequations}

These two equations motivate us to define a matrix $\tilde{\bm{R}}$
using
\begin{equation}
\frac{1}{\theta}\left[p\bm{L}^{\star}+\mathcal{P}_{\Omega}(\bm{E})-\mathcal{P}_{\Omega}^{\mathrm{debias}}(\bm{X}\bm{Y}^{\top}-\bm{L}^{\star})\right]=\frac{1}{\theta}p\bm{U}\bm{\Sigma}\bm{V}^{\top}+\lambda\bm{U}\bm{\Sigma}^{1/2}\bm{Q}\bm{Q}^{\top}\bm{\Sigma}^{-1/2}\bm{V}^{\top}+\lambda\tilde{\bm{R}},\label{eq:def-R-tilde}
\end{equation}
where $\tilde{\bm{R}}$ obeys $\mathcal{P}_{T^{\perp}}(\bm{R})=\mathcal{P}_{T^{\perp}}(\tilde{\bm{R}})$.
To see this, we use the definition of $\bm{R}$ to write 
\begin{align}
\mathcal{P}_{T^{\perp}}(\bm{R}) & =-\frac{1}{\lambda}\mathcal{P}_{T^{\perp}}\left(\theta^{-1}\mathcal{P}_{\Omega}(\bm{X}\bm{Y}^{\top}-\bm{M})\right)=-\frac{1}{\lambda\theta}\mathcal{P}_{T^{\perp}}\left[\mathcal{P}_{\Omega}(\bm{X}\bm{Y}^{\top}-\bm{L}^{\star})-\mathcal{P}_{\Omega}(\bm{E})\right].\label{eq:R-tildeR}
\end{align}
Since $\mathcal{P}_{T^{\perp}}(\bm{X}\bm{Y}^{\top})=0$, by definition
of $\tilde{\bm{R}}$, we obtain
\begin{align*}
\mathcal{P}_{T^{\perp}}(\bm{R}) & =\frac{1}{\lambda\theta}\mathcal{P}_{T^{\perp}}\left[p(\bm{L}^{\star}-\bm{X}\bm{Y}^{\top})+\mathcal{P}_{\Omega}(\bm{E})-\mathcal{P}_{\Omega}(\bm{X}\bm{Y}^{\top}-\bm{L}^{\star})\right]=\mathcal{P}_{T^{\perp}}(\tilde{\bm{R}}).
\end{align*}
Therefore from now on, we concentrate on bounding $\|\mathcal{P}_{T^{\perp}}(\tilde{\bm{R}})\|$. 

To this end, we rewrite equation~\eqref{eq:def-R-tilde} as 
\[
\frac{1}{\theta}\left[p\bm{L}^{\star}+\mathcal{P}_{\Omega}(\bm{E})-\mathcal{P}_{\Omega}^{\mathrm{debias}}(\bm{X}\bm{Y}^{\top}-\bm{L}^{\star})\right]-\lambda\mathcal{P}_{T}(\tilde{\bm{R}})=\frac{1}{\theta}p\bm{U}\bm{\Sigma}\bm{V}^{\top}+\lambda\bm{U}\bm{\Sigma}^{1/2}\bm{Q}\bm{Q}^{\top}\bm{\Sigma}^{-1/2}\bm{V}^{\top}+\lambda\mathcal{P}_{T^{\perp}}(\tilde{\bm{R}}).
\]
Suppose that 
\[
\|\mathcal{P}_{T}(\tilde{\bm{R}})\|\leq\frac{\lambda}{4}\theta,
\]
which together with Lemma~\ref{lemma:E-bound} and Lemma~\ref{lemma:error-size}
implies that 
\[
\frac{1}{\theta}\left\Vert \mathcal{P}_{\Omega}(\bm{E})-\mathcal{P}_{\Omega}^{\mathrm{debias}}(\bm{X}\bm{Y}^{\top}-\bm{L}^{\star})-\lambda\mathcal{P}_{T}(\tilde{\bm{R}})\right\Vert _{\mathrm{}}\le\lambda/8+\lambda/8+\lambda/4=\lambda/2.
\]
By Weyl's inequality and the fact that $\bm{L}^{\star}$ is of rank
$r$, for each $i=r+1,\ldots,n$, one has 
\begin{align}
 & \sigma_{i}\left(\frac{1}{\theta}p\bm{U}\bm{\Sigma}\bm{V}^{\top}+\lambda\bm{U}\bm{\Sigma}^{1/2}\bm{Q}\bm{Q}^{\top}\bm{\Sigma}^{-1/2}\bm{V}^{\top}+\lambda\mathcal{P}_{T^{\perp}}(\tilde{\bm{R}})\right)\label{eq:singular-value-ub}\\
 & \quad\le\frac{1}{\theta}\left\Vert \mathcal{P}_{\Omega}(\bm{E})-\mathcal{P}_{\Omega}^{\mathrm{debias}}(\bm{X}\bm{Y}^{\top}-\bm{L}^{\star})-\lambda\mathcal{P}_{T}(\tilde{\bm{R}})\right\Vert \\
 & \quad\le\lambda/2.\nonumber 
\end{align}
At the same time, for each $i=1,\ldots,r$, we have
\begin{align}
 & \sigma_{i}\left(\frac{1}{\theta}p\bm{U}\bm{\Sigma}\bm{V}^{\top}+\lambda\bm{U}\bm{\Sigma}^{1/2}\bm{Q}\bm{Q}^{\top}\bm{\Sigma}^{-1/2}\bm{V}^{\top}\right)\label{eq:singular-value-lb}\\
 & \quad\ge\sigma_{r}\left[\bm{U}\left(\frac{1}{\theta}p\bm{\Sigma}+\lambda\bm{I}_{r}+\lambda(\bm{\Sigma}^{1/2}\bm{Q}\bm{Q}^{\top}\bm{\Sigma}^{-1/2}-\bm{I}_{r})\right)\bm{V}^{\top}\right]\nonumber \\
 & \quad\ge\sigma_{r}\left(\frac{1}{\theta}p\bm{\Sigma}+\lambda\bm{I}_{r}\right)-\lambda\left\Vert \bm{\Sigma}^{1/2}\bm{Q}\bm{Q}^{\top}\bm{\Sigma}^{-1/2}-\bm{I}_{r}\right\Vert \nonumber \\
 & \quad\ge\lambda-\lambda/3>\lambda/2,\nonumber 
\end{align}
where the last line uses the claim~\eqref{eq:QQT-I}. As a result,
the singular values of $\lambda\mathcal{P}_{T^{\perp}}(\tilde{\bm{R}})$
must fall below $\lambda/2$, i.e., 
\[
\|\mathcal{P}_{T^{\perp}}(\bm{R})\|=\|\mathcal{P}_{T^{\perp}}(\tilde{\bm{R}})\|<1/2.
\]
We are left with controlling $\|\mathcal{P}_{T}(\tilde{\bm{R}})\|$.
Similar to bounding $\|\mathcal{P}_{T}(\bm{R})\|$, using \eqref{eq:bounding-tildeR-V}
and \eqref{eq:bounding-tildeR-U} we have 
\begin{align*}
\|\tilde{\bm{R}}\bm{V}\|_{\mathrm{F}} & =\frac{1}{\lambda}\|\bm{B}_{1}\bm{Q}^{\top}\bm{\Sigma}^{-1/2}\bm{V}\|_{\mathrm{F}}\\
 & \le\frac{1}{\lambda}\|\bm{Q}\|\|\bm{\Sigma}^{-1/2}\|\|\bm{B}_{1}\|_{\mathrm{F}}\\
 & \le\frac{2}{\lambda\sqrt{\sigma_{\min}/2}}\|\nabla g(\bm{X},\bm{Y})\|_{\mathrm{F}}
\end{align*}
and 
\begin{align*}
\|\tilde{\bm{R}}^{\top}\bm{U}\|_{\mathrm{F}} & =\|\bm{V}(\bm{\Sigma}^{-1/2}\bm{Q}\bm{Q}^{\top}\bm{\Sigma}^{1/2}-\bm{\Sigma}^{1/2}\bm{Q}^{-\top}\bm{Q}^{-1}\bm{\Sigma}^{-1/2})-\frac{1}{\lambda}\bm{B}_{2}\bm{Q}^{\top}\bm{\Sigma}^{-1/2}\bm{U}\|_{\mathrm{F}}\\
 & \le\|\bm{\Sigma}^{-1/2}(\bm{Q}\bm{Q}^{\top}-\bm{I}_{r})\bm{\Sigma}^{1/2}\|_{\mathrm{F}}+\|\bm{\Sigma}^{1/2}(\bm{Q}^{-\top}\bm{Q}^{-1}-\bm{I}_{r})\bm{\Sigma}^{-1/2}\|_{\mathrm{F}}+\frac{1}{\lambda}\|\bm{B}_{2}\bm{Q}^{\top}\bm{\Sigma}^{-1/2}\bm{U}\|_{\mathrm{F}}\\
 & \le\frac{64\kappa}{\sqrt{\sigma_{\min}}}\|\nabla g(\bm{X},\bm{Y})\|_{\mathrm{F}}+\frac{2}{\lambda\sqrt{\sigma_{\min}/2}}\|\nabla g(\bm{X},\bm{Y})\|_{\mathrm{F}}.
\end{align*}
Combining the two bounds we have 
\begin{align*}
\|\mathcal{P}_{T}(\tilde{\bm{R}})\| & \le\|\mathcal{P}_{T}(\tilde{\bm{R}})\|_{\mathrm{F}}\le\|\tilde{\bm{R}}^{\top}\bm{U}\|_{\mathrm{F}}+\|\tilde{\bm{R}}\bm{V}\|_{\mathrm{F}}\\
 & \le\frac{64\kappa+8/\lambda}{\sqrt{\sigma_{\min}}}\|\nabla g(\bm{X},\bm{Y})\|_{\mathrm{F}}\\
 & \le\frac{\lambda\theta}{4},
\end{align*}
where the last line comes from equation~\eqref{eq:gradient-size-bound}
and Lemma~\ref{lemma:error-size}.

\subsection{Proof of Lemma~\ref{lemma:XY-decomposition-Q}\label{subsec:Proof-XY-decomp}}

Reuse the definitions of $\bm{B}_{1},\bm{B}_{2}$ in equations~\eqref{eq:f-gradients-B1}
and \eqref{eq:f-gradient-B2}. We can then write
\begin{align*}
\bm{X}^{\top}\bm{X}-\bm{Y}^{\top}\bm{Y} & =\frac{1}{\lambda}\left[\bm{X}^{\top}\left(\bm{B}_{1}-\frac{1}{\theta}\mathcal{P}(\bm{X}\bm{Y}^{\top}-\bm{M})\bm{Y}\right)-\left(\bm{B}_{2}-\frac{1}{\theta}\mathcal{P}(\bm{X}\bm{Y}^{\top}-\bm{M})^{\top}\bm{X}\right){}^{\top}\bm{Y}\right]\\
 & =\frac{1}{\lambda}\left(\bm{X}^{\top}\bm{B}_{1}-\bm{B}_{2}^{\top}\bm{Y}\right),
\end{align*}
which further implies
\begin{align*}
\|\bm{X}^{\top}\bm{X}-\bm{Y}^{\top}\bm{Y}\|_{\mathrm{F}} & =\frac{1}{\lambda}\left\Vert \bm{X}^{\top}\bm{B}_{1}-\bm{B}_{2}^{\top}\bm{Y}\right\Vert _{\mathrm{F}} \le\frac{1}{\lambda}\left(\|\bm{X}\|\|\bm{B}_{1}\|_{\mathrm{F}}+\|\bm{B}_{2}\|_{\mathrm{F}}\|\bm{Y}\|\right)\\
 & \le\frac{2\sqrt{2\sigma_{\max}}}{\lambda}\|\nabla g(\bm{X},\bm{Y})\|_{\mathrm{F}}.
\end{align*}
Here, the last inequality uses the fact that $\max\left\{ \|\bm{B}_{1}\|_{\mathrm{F}},\|\bm{B}_{2}\|_{\mathrm{F}}\right\} \le\|\nabla g(\bm{X},\bm{Y})\|_{\mathrm{F}}$,
and that \[\max\{\|\bm{X}\|,\|\bm{Y}\|\}\leq\sqrt{2\sigma_{\max}}.\] 

In addition, since $\min\{\sigma_{\min}(\bm{X}),\sigma_{\min}(\bm{Y})\}\geq\sqrt{\sigma_{\min}/2}$,
we have $\sigma_{\min}(\bm{X}\bm{Y}^{\top})\geq\sigma_{\min}/2$,
which together with Lemma 20 in the paper \cite{chen2020noisy} implies
the existence of an invertible $\bm{Q}\in\mathbb{R}^{r\times r}$
such that $\bm{X}=\bm{U}\bm{\Sigma}^{1/2}\bm{Q},\bm{Y}=\bm{V}\bm{\Sigma}^{1/2}\bm{Q}^{-\top}$,
and 
\begin{align*}
\|\bm{\Sigma}_{\bm{Q}}-\bm{\Sigma}_{\bm{Q}}^{-1}\|_{\mathrm{F}} & \le\frac{2}{\sigma_{\min}}\|\bm{X}^{\top}\bm{X}-\bm{Y}^{\top}\bm{Y}\|_{\mathrm{F}}\\
 & \le\frac{4\sqrt{2\sigma_{\max}}}{\lambda\sigma_{\min}}\|\nabla g(\bm{X},\bm{Y})\|_{\mathrm{F}}=\frac{4\sqrt{2\kappa}}{\lambda\sqrt{\sigma_{\min}}}\|\nabla g(\bm{X},\bm{Y})\|_{\mathrm{F}}.
\end{align*}
In view of the assumed upper bound $\|\nabla g(\bm{X},\bm{Y})\|_{\mathrm{F}}\leq C_\mathrm{grad}\frac{1}{n^8}\sqrt{\frac{\sigma_{\max}}{p}}$ and $n^2 p\gg\kappa$,
one has 
\[
\sigma_{\max}(\bm{\Sigma}_{\bm{Q}})-\sigma_{\max}^{-1}(\bm{\Sigma}_{\bm{Q}})\le\|\bm{\Sigma}_{\bm{Q}}-\bm{\Sigma}_{\bm{Q}}^{-1}\|_{\mathrm{F}}\le C_\mathrm{grad}\frac{4\sqrt{2}\cdot\kappa}{\lambda n^8\sqrt{p}}\le1,
\]
and hence $\|\bm{Q}\|=\|\bm{\Sigma}_{\bm{Q}}\|=\sigma_{\max}(\bm{\Sigma}_{\bm{Q}})\le2$.
As a result, we have 
\begin{align*}
\left\Vert \bm{\Sigma}^{1/2}\bm{Q}\bm{Q}^{\top}\bm{\Sigma}^{-1/2}-\bm{I}_{r}\right\Vert  & =\left\Vert \bm{\Sigma}^{1/2}(\bm{U}_{\bm{Q}}\bm{\Sigma}_{\bm{Q}}\bm{\Sigma}_{\bm{Q}}\bm{U}_{\bm{Q}}^{\top}-\bm{U}_{\bm{Q}}\bm{\Sigma}_{\bm{Q}}\bm{\Sigma}_{\bm{Q}}^{-1}\bm{U}_{\bm{Q}}^{\top})\bm{\Sigma}^{-1/2}\right\Vert \\
 & \le\|\bm{\Sigma}^{1/2}\|\|\bm{\Sigma}^{-1/2}\|\left\Vert \bm{U}_{\bm{Q}}\right\Vert \|\bm{U}_{\bm{Q}}^{\top}\|\|\boldsymbol{\Sigma}_{\boldsymbol{Q}}\|\|\bm{\Sigma}_{\bm{Q}}-\bm{\Sigma}_{\bm{Q}}^{-1}\|_{\mathrm{F}}\\
 & \le\frac{32\kappa}{\sqrt{\sigma_{\min}}}\|\nabla g(\bm{X},\bm{Y})\|_{\mathrm{F}}\le1/3,
\end{align*}
where the last inequality again uses the assumed bound~\eqref{eq:gradient-size-bound}.
\end{document}